\documentclass{article}

\usepackage{color}             
\usepackage[all,color]{xy}     
\usepackage{graphicx}          
\usepackage{amsmath}           
\usepackage{amsfonts}          
\usepackage{amssymb}           
\usepackage{mathrsfs}          
\usepackage{amsthm}            
\usepackage{enumerate}         
\usepackage{manfnt}            
\usepackage{marvosym}          
\usepackage{pifont}            
\usepackage[ansinew]{inputenc} 
\usepackage{float}             
\usepackage{booktabs}
\usepackage{verbatim}          
\usepackage{subfigure}         
\usepackage{hyperref}          
\usepackage{cancel}            

\usepackage{mathtools}
\usepackage{units}
\usepackage{t1enc}
\usepackage{xspace}
\usepackage{epsfig}  
\usepackage{xspace}            
\usepackage{oldgerm} 
\usepackage{url}               
\usepackage{latexsym}
\usepackage{supertabular}
\setcounter{tocdepth}{3}




\newtheorem{thm}{Theorem}
\newtheorem{cor}[thm]{Corollary}
\newtheorem{lem}[thm]{Lemma}

\theoremstyle{remark}

\theoremstyle{definition}
\newtheorem{dfn}[thm]{Definition}

\newtheorem{rmk}[thm]{Remark}
\newtheorem{notation}[thm]{Notation}




\newcommand{\ndivides}{\ensuremath{\nmid}}

\newcommand{\bigdotcup}{\ensuremath{\dot{\bigcup}}}

\newcommand{\N}{\ensuremath{\mathbb{N}}}
\newcommand{\Z}{\ensuremath{\mathbb{Z}}}
\newcommand{\R}{\ensuremath{\mathbb{R}}}
\newcommand{\C}{\ensuremath{\mathbb{C}}}
\newcommand{\Q}{\ensuremath{\mathbb{Q}}}
\renewcommand{\H}{\ensuremath{\mathbb{H}}}

\newcommand{\Zp}{\ensuremath{\mathbf{Z}_p}\xspace}

\newcommand{\Ztwo}{\ensuremath{\mathbf{Z}_2}\xspace}

\newcommand{\A}{\mathcal{A}}

\newcommand{\e}{\mathfrak{e}}

\renewcommand{\a}{\textfrak{a}}
\renewcommand{\b}{\textfrak{b}}
\renewcommand{\c}{\textfrak{c}}


\newcommand{\im}{\ensuremath{\operatorname{image}}}
\renewcommand{\Im}{\ensuremath{\operatorname{Im}}}

\newcommand{\chr}{\ensuremath{\operatorname{char}}}
\newcommand{\sign}{\ensuremath{\text{sign}}}
\newcommand{\SL}{\ensuremath{\operatorname{SL}}}
\newcommand{\GL}{\ensuremath{\operatorname{GL}}}

\renewcommand{\ker}{\ensuremath{\text{ker}}}
\newcommand{\id}{\ensuremath{\text{id}}}

\newcommand{\smallsum}{\ensuremath{\textstyle \sum}}

\newcommand{\mat}[1]{\ensuremath{\begin{pmatrix} #1 \end{pmatrix}}}
\newcommand{\tmat}[1]{\ensuremath{\left(\begin{smallmatrix} #1 \end{smallmatrix}\right)}}


\newcommand{\one}{\ensuremath{\mathbf 1}}


\newcommand{\ideal}[1]{\ensuremath{\left\langle{#1}\right\rangle}}
\newcommand{\bilform}[1]{\ensuremath{\langle{#1}\rangle}}


\DeclareFontFamily{U}{matha}{\hyphenchar\font45}
\DeclareFontShape{U}{matha}{m}{n}{
      <5> <6> <7> <8> <9> <10> gen * matha
      <10.95> matha10 <12> <14.4> <17.28> <20.74> <24.88> matha12
      }{}
\DeclareSymbolFont{matha}{U}{matha}{m}{n}
\DeclareFontFamily{U}{mathx}{\hyphenchar\font45}
\DeclareFontShape{U}{mathx}{m}{n}{
      <5> <6> <7> <8> <9> <10>
      <10.95> <12> <14.4> <17.28> <20.74> <24.88>
      mathx10
      }{}
\DeclareSymbolFont{mathx}{U}{mathx}{m}{n}

\DeclareMathSymbol{\obot}         {2}{matha}{"6B}
\DeclareMathSymbol{\bigobot}       {1}{mathx}{"CB}

\newcommand{\orthplus}{\ensuremath{\obot}}


\newcommand{\D}{\mathcal{D}}

\renewcommand{\a}{\textfrak{a}}
\renewcommand{\b}{\textfrak{b}}
\newcommand{\F}{\ensuremath{\mathcal{F}}}
\newcommand{\FF}{\ensuremath{\overline{\F}}}

\newcommand{\M}{\ensuremath{\mathcal{M}}}
\newcommand{\V}{\ensuremath{\mathcal{V}}}
\renewcommand{\L}{\ensuremath{\mathcal{L}}}


\newcommand{\diag}{\ensuremath{\operatorname{diag}}}
\newcommand{\vvmf}[1]{\ensuremath{M_k(#1)}}
\newcommand{\vvcf}[1]{\ensuremath{S_k(#1)}}
\newcommand{\vvnf}[2]{\ensuremath{S_k(#1)^{\text{new}, #2}}}
\newcommand{\vvof}[2]{\ensuremath{S_k(#1)^{\text{old}, #2}}}
\newcommand{\vvnfwo}[1]{\ensuremath{S_k(#1)^{\text{new}}}}
\newcommand{\vvofwo}[1]{\ensuremath{S_k(#1)^{\text{old}}}}

\newcommand{\pform}[1]{\ensuremath{\bilform{#1}_p}}
\usepackage{authblk}

\title{Almost every vector valued modular form is an oldform}
\author{Fabian Werner}
\affil{werner@mathematik.tu-darmstadt.de}
\begin{document}

\maketitle 
\begin{abstract}
\noindent In this article we show that 'most' of the vector valued modular forms
w.r.t. the Weil representation on the groups rings $\C[D]$ of discriminant forms $D$ are oldforms.
The precise meaning of oldform is that the form can be represented as a sum of lifts 
of vector valued modular forms on group rings of quotients $H^\bot/H$ for isotropic subgroups $H$ of $D$.
In this context, 'most' means that all forms are oldforms if there is a part $\Z/p^e\Z$ 
inside a $p$-part of $D$ that is repeated several times (i.e. $\geq 4,5,7,9$ depending on $p$ and $e$).
We will proceed by giving an oldform detection mechanism.
This criterion also gives rise to an efficient algorithm for computing the decomposition of 
cusp forms into their spaces of old- and newforms when only given the Fourier 
coefficients of a basis of the space of cusp forms.
\end{abstract}

\section{Overview}
Let $\D = (D, Q)$ be a discriminant form. To simplify the exposition, we assume throughout that the 
signature of $D$ is even. All results carry over naturally to the case of an odd signature 
and the metaplectic cover.
Recently, lifts for vector valued modular forms for quotients $H^\bot/H$ -- where $H$ is an isotopic 
subgroup -- of the form
$$G = \sum_{\a \in D_H} G_\a \e_\a \mapsto G\!\uparrow_H\, := \sum_{\gamma \in H^\bot} G_{\gamma + H}\, \e_\gamma$$
\noindent have gained attention. Note that in the body of the paper, 
this map will be called $\uparrow_H^\text{init}$ in order to distinguish 
it from its purely algebraic version (see Section \ref{sec:separation}). 
This map is expected to replace the lifting process for 
dividing levels in the scalar valued case, thus giving 
rise to an oldform/newform theory.
Following the scalar valued ideas, one defines the space of oldforms as
$$\sum_{H ~\text{isotropic}} \vvcf{H^{\!\bot}\!\!/\!H}\!\uparrow_H$$
where $\vvcf{\D}$ is the space of entirely holomorphic vector valued modular cusp forms 
of weight $k$ for the discriminant form $\D$.

\noindent The main purpose of this paper is to show the following:

\vspace{1cm}
\noindent \emph{
If $N\in \N$ is fixed and $\D$ is a discriminant form of level $N$ with 
$|D| \geq N^9$, then every vector valued modular form for $D$ is an oldform. 
This bound ($N^9$) is absolutely not optimal.
}

\vspace{1cm}

\noindent This is stated as Cor. \ref{cor:main-cor} in section \ref{sec:main-theorem}. 
In other words, the meaning of this result is that for every fixed 
level $N$ we only have to study finitely many vector valued 
modular forms for discriminant forms of this level.
In fact, the number $N^9$ is absolutely not optimal, it suffices 
if a certain $p$-part of the discriminant form is repeated often enough, see Thm \ref{thm:prank-geq-oldform}.

\noindent We achieve this by giving a purely algebraic characterization for detecting oldforms 
which is interesting in its own right.
More precisely, we show in Thm \ref{thm:oldforms:detectionThm} that for an arbitrary selection of 
isotropic subgroups $H_1, ..., H_n$,
$$ F ~\text{is an oldform with respect to the $H_1, ..., H_n$} \iff \ker(\downarrow) \subset \ker(\F)$$
where $\downarrow$ is a certain 'algebraic part' of the 'converse' map of $\uparrow$ and
$\F$ is the $\C$-linear map sending $\e_\gamma$ to its component $F_\gamma$ as a modular form for $\Gamma(N)$.
This result can be understood as a generalization of the work of Bruinier, cite{bruinier-converse} Thm. 3.6.

\noindent Moreover, we present the following:
given a basis of vector valued modular forms for some concrete weight and 
discriminant form up to some Sturm bound (which have been created using a computer 
algebra system for example) this characterization allows us to compute bases 
for the spaces of oldforms (with respect to any selection of isotropic subgroups) and, 
if we restrict ourselves to cusp forms, we can compute its orthogonal complement, 
i.e. the space of newforms with a little trick as well, see Thm. \ref{thm:oldforms-summarize}. 
This is great for doing 
concrete experiments with vector valued modular forms, especially in view of the 
fact that M. Raum has recently given an algorith that computes bases of vector 
valued modular forms, see \cite{raum}.
Secondly, more abstractly, the same strategy as in the proof of the theorem above 
allows us to solve the converse problem, i.e. the question of 
whether a vector valued modular form for the smaller discriminant form $H^{\!\bot}\!/H$ is 
a 'down' lift of some form above.

\noindent The paper is organized as follows: 

\noindent \begin{tabular}{lp{10cm}}
  Section & Contents \\
	\hline\\
	Sec. \ref{sec:prerequisites} & 
	  Basic terminology, lattices, discriminant forms, Weil representations, 
		scalar valued and vector 
		valued modular forms, up/down maps \\\\
	Sec. \ref{sec:separation} & 
	  Separation of up/down maps into algebraic and trivial part,
		algebraic parts are homomorphisms of Weil representations\\\\
	Sec. \ref{sec:characterization} &
	  Characterization: $F$ is old iff. $\ker(\downarrow) \subset \ker(\F)$ \\\\
	Sec. \ref{sec:algorithm} &
	  An algorithm for splitting vector valued cusp forms 
		into old and new spaces\\\\
	Sec. \ref{sec:preparation} &
	  Technical lemmas needed for the proof of the main theorem \\\\
  Sec. \ref{sec:main-theorem} &
		Proof of the main theorem, the version announced above is then a corollary
\end{tabular}

\section{Prerequisites}
\label{sec:prerequisites}

The group $\GL^+_2(\R) = \{M \in \Z^{2 \times 2} | \det(M) > 0\}$ acts from the left on the upper half plane
$\H := \{ \tau \in \C | \Im(\tau) > 0\}$ by
$$M.\tau = \frac{a \tau + b}{c \tau + d}, ~~~ M = \mat{a & b \\ c & d}$$
We can continue this action to $\H \cup \R \cup \{\infty\}$ by putting
$M.\infty := a/c$ and if $c\tau+d = 0$ then we put $M.\tau := \infty$.
We also put $(M:\tau) := c \tau + d$. This induces a right action on functions from $\H$ to $\C$ by
$$f|_k M := (M:\tau)^{-k} f(M\tau), ~~~ k \in \Z$$
Mostly, $k$ will be fixed throughout, so we will drop it from the notation.
If $\Gamma$ is a group, a group homomorphism $\chi : \Gamma \to \C$ is called a character.
To simplify the exposition, let us assume that $\Gamma$ is one of the well studied subgroups,
\begin{align*}
  \Gamma_0(N) &:= \left\{ M = \mat{a & b \\ c & d} \in \SL_2(\Z):  c \equiv 0 \!\!\!\!\mod N \right\} \\
  \Gamma_1(N) &:= \left\{ M = \mat{a & b \\ c & d} \in \SL_2(\Z):  a \equiv d \equiv 1 \!\!\!\!\mod N,~ c \equiv 0 \!\!\!\!\mod N \right\} \\
  \Gamma(N) &:= \left\{ M \in \SL_2(\Z) :  M \equiv \text{Id} \!\!\!\!\mod N \right\}
\end{align*}
of $\SL_2(\Z)$. We will only need these subgroups in the body of the paper anyhow.

\begin{dfn}
Let $\Gamma$ be one of the subgroups as above. Let $\chi : \Gamma \to \C^\times$ be a character
with the property that $\Gamma(N) \subset \ker(\chi)$.
An entirely holomorphic modular form of weight $k \in \Z$ for $\Gamma$ with 
character $\chi$ is a function $f : \H \to \C$ such that
\begin{enumerate}
	\item $f$ is holomorphic.
	\item $f|\gamma = \chi(\gamma) f$ for all $\gamma \in \Gamma$.
	\item for every $M \in \SL_2(\Z)$ (not merely all $M \in \Gamma$!), 
	      $f|M(\tau)$ is bounded when $\tau \to \infty$.
\end{enumerate}
The $\C$-vector space of all these functions will be denoted by $M_k(\Gamma, \chi)$.

\noindent In case that $f \in M_k(\Gamma, \chi)$, one can show 
(\cite{diamond-shurman} pp. 1-5, \cite{werner-bsc}, Thm. 2.4.7 or any other 
book on modular forms) that $f$ possesses a Fourier expansion
$$ f(\tau) = \sum_{n = 0}^\infty a_n q^{n/N}, ~~ q = \exp(2 \pi i \tau) = e(\tau)$$
or, more general, for every $M \in \SL_2(\Z)$,
$$ f|_M(\tau) = \sum_{n = 0}^\infty a^{(M)}_n q^{n/N}$$

\noindent $f$ is called cusp form if $a^{(M)}_0 = 0$ for all $M \in \SL_2(\Z)$. The subspace of all such functions will be denoted by $S_k(\Gamma, \chi)$.
\end{dfn}

\noindent We want to describe one possible generalization of the theory of scalar valued modular forms, namely vector valued modular forms. These are functions that 'behave well' under slashing with matrices in $\SL_2(\Z)$ but now they map $\H$ to certain finite dimensional $\C$-vector space. Before we are going to describe the structure we need some terminology.

\noindent Let throughout $R$ be a commutative ring and let $X,Z$ be $R$-modules. 
If $R$ is an integral domain then let $F$ denote its field of fractions.
A $Z$-valued bilinear form is a map
$$b : X \times X \to Z$$
such that $b(\cdot, x) : X \to Z$ and $b(x, \cdot) : X \to Z$ are $R$-linear for all $x \in X$.
If $Z=R$ then we call $b$ integral.
$b$ is called non-degenerate (resp. unimodular) if 
the maps $x \mapsto b(\cdot, x)$ and $x \mapsto b(x, \cdot)$ 
(from $X$ to $\operatorname{Hom}_R(X \to Z)$) are injective (resp. bijective).
We say that $b$ is symmetric if $b(x,y) = b(y,x)$ for all $x,y \in X$.
We say that $b$ is even, if $b(x,x) \in 2R$ for all $x \in X$. 
If $b$ is symmetric and $E,W \subset X$ are submodules, then we
write $E \bot W$ if $b(x,y) = 0$ for all $x \in E, y \in W$.
If $E \bot W$ and for every $s \in E + W$, the elements $e,w$ in $s = e+w$ are unique, 
then we write $E \orthplus W$.

\noindent A $Z$-valued quadratic form is a map $Q : X \to Z$
with the properties that $Q(ax) = a^2Q(x)$ for all $x \in X, a \in R$ and
$$ b_Q : X \times X \to Z, ~~(x,y) \mapsto Q(x+y) - Q(x) - Q(y)$$
is a bilinear map.
$Q$ is called integral if $Z=R$.
$Q$ is called non-degenerate (respectively unimodular) if $b_Q$ has the respective property. 

\noindent A $Z$-valued $R$-lattice is a touple $\L = (L, b)$ consisting
of a freely, finitely generated $R$-module $L$ (i.e. there is a finite set 
$v_1, ..., v_n$ such that $L = Rv_1 \oplus ... \oplus Rv_n$ meaning that every 
$v \in L$ can be written as a unique $R$-linear combination of the $v_i$)
together with a $Z$-valued bilinear form $b$. 
$\L$ is called integral, even, non-degenerate or unimodular if $b$ has the respective property.

\noindent Together with every integral lattice over an integral domain $R$ comes 
its $F$-vector space $V = L \otimes F$ and the $F$-valued 
bilinear form $b_F = b \otimes \id_F$.
For a lattice $\L$ we define $\L'$ to be the touple consisting of
$$ L' := \{ v \in V : b(v,l) \in R ~~ \forall l \in L\}$$
together with the $F$-valued bilinear form $b_F|_{L' \times L'}$ and call 
this the dual lattice to $\L$.

\noindent A discriminant form is a touple $\D = (D, Q)$ consisting of a finite abelian group $D$ and
a so-called finite quadratic form, that is, a non-degenerate quadratic form $Q : D \to \Q/\Z$ such 
i.e. for the associated bilinear form $(\gamma, \delta) = Q(\gamma+\delta) - Q(\gamma) - Q(\delta)$ we have $D^\bot = \{0\}$.
Two discriminant forms $(D, Q), (\tilde{D}, \tilde{Q})$ are called isomorphic if there exists
a $\Z$-module isomorphism $\phi : D \to \tilde{D}$ such that
$\tilde{Q}(\phi(\gamma)) = Q(\gamma)$ for all $\gamma \in D$.

\noindent Mostly, we are rather sloppy with the notation and just write $L$ for $\L$ and $D$ for $\D$ because 
the bilinear, resp. quadratic form will be fixed or clear from the context.

\noindent One of the key-features of discriminant forms is the following:
\begin{thm}
\label{thm:jordan}
Every discriminant form $\D = (D, Q)$ possesses a so-called Jordan splitting, i.e. one finds
a basis in the sense of finitely generated abelian groups of $D$ such that the matrix 
consisting of the bilinear pairings (modulo \Z) is diagonal on the odd $p$-parts 
and almost diagonal on the $2$-adic part. More precisely:
$D$ is the orthogonal sum over components $C$ of the form
  \begin{enumerate}
    \item 
    $C \cong \Z_{p^e}$ for some odd prime $p$ and $C$ is generated by a single 
    element $\gamma$ with $(\gamma, \gamma) = \tfrac{a}{p^e}$ where $a \in \Z, \gcd(a,p)=1$
    and $Q(\gamma) = \tfrac{2^{-1}a}{p^e} + \Z$ where the inversion of $2$ 
    takes place in $\Z_{p^e}$.
    \item 
    $C \cong \Z_{2^e}$ is generated by a single 
    element $\gamma$ with $(\gamma, \gamma) = \tfrac{a}{2^e}$ where $a \in \Z, \gcd(a,2)=1$
    and $Q(\gamma) = \tfrac{a + v2^e}{2^{e+1}} + \Z$ where $v$ is either $0$ or $1$.
    \item 
    $C \cong \Z_{2^e} \times \Z_{2^e}$ is generated by two elements $\gamma, \delta$
    such that the Gram matrix of pairings of $\gamma$ and $\delta$ is given by
      $$2^{-e}\mat{x & 1 \\ 1 & x}$$
    where $x$ is either $0$ or $2$.
    If $x=0$ then $Q(\gamma) = Q(\delta) = 0 + \Z$. We say that this is a block of type (A).
    If $x=2$ then $Q(\gamma) = Q(\delta) = \tfrac{1}{2^e} + \Z$. We say that this is a block of type (B).
  \end{enumerate}
\end{thm}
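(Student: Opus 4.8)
\emph{Proof proposal.} The plan is an induction on $|D|$ that peels off one Jordan block at a time, preceded by a reduction to prime power groups. First I would decompose $D = \bigoplus_p D_p$ into its primary components as an abelian group. For $\gamma \in D_p$ of order $p^a$ and $\delta \in D_q$ of order $q^b$ with $p \ne q$, the value $(\gamma,\delta)\in\Q/\Z$ is annihilated by the coprime integers $p^a$ and $q^b$, hence vanishes; likewise $Q(\gamma)$ is annihilated by a power of $p$ for $\gamma\in D_p$. Thus the $D_p$ are pairwise orthogonal, $Q$ is the orthogonal sum of the restrictions $Q|_{D_p}$, and each $Q|_{D_p}$ is again non-degenerate. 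So it suffices to produce a Jordan splitting of a single $p$-group discriminant form.

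Next I would record the splitting-off principle: if $E \subseteq D$ is a subgroup on which the associated bilinear form is non-degenerate, then $D = E \orthplus E^\bot$ and $Q|_{E^\bot}$ is non-degenerate. Indeed $E\cap E^\bot = 0$ by non-degeneracy on $E$; for arbitrary $x\in D$ the functional $e\mapsto (x,e)$ on the finite group $E$ is represented by some $e_0\in E$, so $x-e_0\in E^\bot$ and $D = E+E^\bot$; and if $y\in E^\bot$ pairs trivially with $E^\bot$ then it pairs trivially with $E+E^\bot = D$, so $y=0$. Granting this, the theorem reduces to the claim that every nontrivial $p$-group discriminant form contains a non-degenerate subgroup of one of the listed shapes, since we can then split it off and apply the inductive hypothesis to $E^\bot$.

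For the block itself, let $\gamma$ have maximal order $p^e$. Non-degeneracy forces some $\delta$ with $(\gamma,\delta)$ of order exactly $p^e$ (otherwise $p^{e-1}\gamma\neq 0$ would lie in the radical), and any such $\delta$ has order $p^e$ as well. For odd $p$: if $(\gamma,\gamma)$ already has order $p^e$, take $E=\langle\gamma\rangle$; otherwise $(\gamma+\delta,\gamma+\delta)=(\gamma,\gamma)+2(\gamma,\delta)+(\delta,\delta)$ has order $p^e$ because $2$ is a unit mod $p^e$, so replace $\gamma$ by $\gamma+\delta$ (which then also has order $p^e$). In either case $\langle\gamma\rangle\cong\Z_{p^e}$ with $(\gamma,\gamma)=a/p^e$, $\gcd(a,p)=1$, is non-degenerate, and $Q(\gamma)=2^{-1}a/p^e$ since $2Q=b_Q$ and $2$ is invertible in $\Z_{p^e}$. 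For $p=2$: if some maximal-order element $\gamma$ has $(\gamma,\gamma)$ of order $2^e$, then $E=\langle\gamma\rangle\cong\Z_{2^e}$ is non-degenerate and $2Q(\gamma)=(\gamma,\gamma)=a/2^e$ forces $Q(\gamma)=(a+v2^e)/2^{e+1}$ with $v\in\{0,1\}$, which is case (2). Otherwise every element of order $2^e$ has self-pairing of order $\le 2^{e-1}$; take $\gamma$ of order $2^e$ and $\delta$ as above, so $(\gamma,\delta)$ has order $2^e$ while $(\gamma,\gamma)$ and $(\delta,\delta)$ do not. Then $\langle\gamma,\delta\rangle$ strictly contains $\langle\gamma\rangle\cong\Z_{2^e}$, and its second invariant cannot be smaller than $2^e$ (that would force $(\gamma,\delta)$ to have order $\le 2^{e-1}$), so $\langle\gamma,\delta\rangle\cong\Z_{2^e}\times\Z_{2^e}$ with Gram matrix $2^{-e}\tmat{s & u \\ u & t}$ having $s,t$ even and $u$ odd; its discriminant $st-u^2$ is odd, so the form is non-degenerate and we may split it off.

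Finally I would normalize the $2$-adic rank-$2$ block: a change of basis in $\GL_2(\Z/2^e\Z)$ brings $2^{-e}\tmat{s & u \\ u & t}$ with $u$ odd and $s,t$ even to $2^{-e}\tmat{x & 1 \\ 1 & x}$ with $x\in\{0,2\}$, the two cases being separated by an invariant such as the discriminant modulo $8$, and one checks directly that the quadratic values of the chosen generators are $0$ (type (A)) or $1/2^e$ (type (B)), using $2Q=b_Q$ on the diagonal entries together with a short computation of $Q$ on the generators and, if necessary, a final correction of a generator by an element of order $2$. I expect the genuine effort to sit in this last paragraph and in the parallel step above it --- verifying that the ``bad'' $2$-adic case always yields a summand isomorphic to $\Z_{2^e}\times\Z_{2^e}$ and that the even binary $2$-adic forms of unit discriminant are exhausted by types (A) and (B); the primary decomposition, the splitting-off lemma, and the whole odd-$p$ analysis are routine.
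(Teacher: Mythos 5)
The paper does not prove Theorem~\ref{thm:jordan} itself; it simply refers the reader to \cite{werner-jordanform}, so there is no in-paper argument to compare against and your outline has to be judged on its own. It follows the standard route (primary decomposition into $p$-parts, a splitting-off lemma for non-degenerate subgroups, inductive removal of one Jordan block at a time), and the skeleton is sound. Two remarks.

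First, a small slip in the odd-$p$ step. You assert that when $(\gamma,\gamma)$ has order $<p^e$, the self-pairing of $\gamma+\delta$ has order $p^e$ ``because $2$ is a unit''. That only follows when $(\delta,\delta)$ also has order $<p^e$: if $(\delta,\delta)$ has order $p^e$, the numerators of $2(\gamma,\delta)$ and $(\delta,\delta)$ can cancel modulo~$p$, and then $(\gamma+\delta,\gamma+\delta)$ has order $<p^e$. The repair is a one-liner (if $(\delta,\delta)$ has full order, split off $\langle\delta\rangle$), but your case analysis as written omits it.

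Second, the $2$-adic normalization in your final paragraph is, as you yourself say, where the genuine effort sits, and you defer it rather than carry it out. Be aware that this step must be performed at the level of the quadratic form $Q$ and not merely the bilinear form $b_Q$: the relation $2Q=b_Q$ pins down $Q(\gamma_i)$ only up to $\tfrac{1}{2}$, and for $e=1$ the type (A) and type (B) blocks have identical $\Q/\Z$-valued Gram matrices $\tmat{0&1/2\\1/2&0}$, so any invariant living purely on the bilinear side (such as the determinant, which for these forms is only well defined modulo $2^{e+1}$, hence modulo $4$ when $e=1$) cannot separate them. Your ``correct a generator by an element of order $2$'' trick handles $e\ge 2$, but the $e=1$ case has to be settled directly by inspecting the values of $Q$ on the three nonzero elements. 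As a plan this is correct and honest about where the difficulty lies, but it is not yet a proof until that last paragraph is actually executed.
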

\begin{proof}
A proof can be found in \cite{werner-jordanform}.
\end{proof}

\noindent Examples of discriminant forms can be obtained by using even, 
non-degenerate $\Z$-lattices $\mathcal{L} = (L, b)$:
As $L$ is even, we can define an integral quadratic 
form $Q : L \to \Z, Q(x) := b(x,x)/2$. Its associated bilinear form 
$b_Q$ is nothing else than $b$, so $b(x,y) = Q(x+y) - Q(x) - Q(y)$ is integral.
Hence, $L \subset L'$ and it makes sense to consider 
$D := L'/L$. Then $Q(x + L) := Q(x) + \Z$ gives a discriminant form.
One can show that in fact, all discriminant forms arise in such a way.
One uses the following strategy: using Thm. \ref{thm:jordan}, we obtain 
a Jordan splitting of $D$. Then one only needs to show the existence 
of a lattice for the Jordan constituents and this problem can be solved
in a surprisingly easy way, see \cite{wall}, Thm 6, mainly p. 297.

\noindent Let $L$ be an even lattice.
For $\tau \in \C$, let $e(\tau) := \exp(2 \pi i \tau)$.
Milgrams formula (\cite{milnor-husemoller} Appendix 4) shows that
$$\sum_{\gamma \in L'/L} Q(\gamma) = \sqrt{|D|} e\left(s/8\right)$$
where $s$ is the signature (over $\R$) of the lattice $L$.
For this reason, for a given discriminant form $D$ there is a number 
$\overline{s} \in \Z/8\Z$ such that 
all even non-degenerate integral $\Z$-lattices $L$ having $D$ as 
their discriminant form (up to isomorphy) have a signature congruent 
to $\overline{s}$ modulo $8$. This element is called the signature $\sign(\D)$ of $\D$.

If the signature of $\D$ is even, then there is a unitary representation $\rho$ of $\SL_2(\Z)$ on
$\C[D]$, the \C-vector space of dimension $|D|$ with canonical 
orthonormal basis $(\e_\gamma)_{\gamma \in D}$. This representation works as follows:
$\SL_2(\Z)$ is generated by $S = \tmat{0 & -1 \\ 1 & 0}$ and $T = \tmat{1 & 1 \\ 0 & 1}$.
Their \C-linear actions are given by
\begin{align*}
  \rho(T) \e_\gamma &= e(Q(\gamma)) \e_\gamma \\
	\rho(S) \e_\gamma &= c_\D \sum_{\beta \in D} e(-(\gamma, \beta)) \e_\beta \\
\end{align*}
where 
$$ c_\D := \frac{e(-\sign(\D)/8)}{\sqrt{|D|}}$$
Of course, no one came up with such formulae out of nowhere, they are conrete 
instances of a more general construction due to A. Weil.
One can see some traces of this process: The action of $S$ is essentially a Fourier transform.
The construction in full generality can be found in \cite{weil}. 
A down-to-earth proof that just makes use of Milgrams Formula (which in turn is proved 
in a down-to-earth way in \cite{milnor-husemoller} Appendix 4) is written down in
\cite{werner-weil}.

\noindent If the signature is not even, then still, there is a representation but one has to pass to a 
degree $2$ metaplectic cover of $\SL_2(\Z)$. We are going to skip this case for the sake of 
readability but all the results carry over naturally.

Let $\D$ be a discriminant form of even signature and $\rho$ the Weil representation on $\C[D]$.
A holomorphic vector valued modular form of weigth $k \in \Z$ is a holomorphic function 
$F : \H \to \C[D]$ satisfying 
$$F\left( \frac{a \tau + b}{c \tau + d}\right) = (c \tau + d)^k \rho\mat{a & b \\ c & d} F(\tau)$$
for all $\tau \in \H, \tmat{a & b \\ c & d} \in \SL_2(\Z)$, in short: $F|M = \rho(M) F$, and 
is holomorphic at $\infty$ meaning that, for example, $F(\tau)$ stays bounded as $\tau \to \infty$.
The set of all such functions will be denoted by $\vvmf{\D}$.

The level $N$ of a discriminant form $\D$ is the smallest natural number $m$ such 
that $mQ(\gamma) = 0 + \Z$ for all $\gamma \in D$.
It is widely believed to be proven for a long time that $\rho(M) = \text{Id}_{\C[D]}$ 
for all $M \in \Gamma(N)$. However, to the best of the authors knowledge, the only 
proof that was officially, completely written down is due to S. Zemel 
(\cite{zemel-diss}, Thm. 3.2) in 2011. An alternative 
down-to-earth proof is due to N.-P. Skoruppa. 
Unfortunately his book about the Weil representation is not published yet.
As $F$ -- or $F|M$, which is just a linear combination of the components 
of $F$ -- stays bounded when $\tau \to \infty$, the same is true for 
every component. Hence, every component of a vector valued modular form 
is a scalar valued modular form for $\Gamma(N)$. We use this assertion 
without mentioning it any further. $F$ is called a cusp form iff. 
every component is a cusp form. The subspace of cusp forms will be 
written as $\vvcf{\D}$.

We describe the Petersson scalar product:
The measure
$$\nu(M) = \int_M \frac{1}{y^2}dx dy,~~ M \subset \H ~\text{Lebesgue-measurable}$$
is $\GL_2^+(\R)$-invariant (see \cite{koecher-krieg}, Kap. IV, \textsection 3).
Let $\mathcal{A}$ be an arbitrary fundamental domain for $\Gamma$,
that is, a 'nice' system of representatives for $\Gamma\backslash\H$
with the property that $\nu(\partial\mathcal{A}) = 0$ where
$\partial \mathcal{A}$ is the topological boundary of $\mathcal{A}$.
Different authors give different (wrong!, see \cite{elstrodt}) 
definitions of 'nice' and forget about the 
additional condition. However,
for the three subgroups $\Gamma_0(N), \Gamma_1(N), \Gamma(N)$,
every of the definitions floating around in current literature 
(for example: \cite{miyake}, \textsection 1.6, \cite{koecher-krieg} Kap. II \textsection 3)
together with the condition $\nu(\partial\mathcal{A}) = 0$ will do.

\noindent Let $f,g \in S_k(\Gamma)$ with $\Gamma$ being a 
'nice' subgroup of $\SL_2(\Z)$, say one of the examples given above.

\noindent The map 
$$\ideal{f,g} := \frac{1}{[\SL_2(\Z) : \Gamma]}\int_{\mathcal{A}} f(\tau) \overline{g}(\tau) y^k dx dy/y^2$$
is convergent (\cite{koecher-krieg} Kap. IV, \textsection 3, 
\cite{diamond-shurman} \textsection 5.4, etc.),
is independent of the chosen fundamental domain 
(\cite{koecher-krieg}, Kap. IV, \textsection 3, pp. 231-232)
and turns $S_k(\Gamma)$ into a Hilbert space.
In complete analogy we define the Petersson scalar product for vector valued cusp forms $F, G$ to be
$$\ideal{F, G} := \int_{\A} \sum_{\gamma \in D} F_\gamma \overline{G}_\gamma y^k dx dy/y^2$$
where here, $\mathcal{A}$ is a fundamental domain for $\SL_2(\Z) \backslash \H$.

\noindent We are going to describe a part of the theory of (scalar valued) modular forms called 'old/newform-theory':
Let $f \in S_k(\Gamma_0(A))$ for some $A | B$. Then there are multiple ways of interpreting $f$ 
as an element in $S_k(\Gamma_0(B))$. Generally, one can consider $f \mapsto f(c\tau)$ 
where $c | \tfrac{A}{B}$; $c=1$ corresponding to the trivial inclusion 
$S_k(\Gamma_0(A)) \subset S_k(\Gamma_0(B))$. The span of the images of all these maps
is called the subspace of oldforms. The reason why we restrict ourselves to cusp 
forms is that we want to take the orthogonal complement w.r.t. the Petersson scalar product of 
the space of oldforms and call this the space of newforms. This space has some extremely important 
properties (eigenbasis for Hecke operators, Euler products, connections to elliptic curves, etc), 
see for example \cite{miyake}, \textsection 4.6 ff. Hence, it is important to ask whether 
there is a similar construction for vector valued modular forms.

Let $\D = (D, Q)$ be a discriminant form.
An element $\gamma$ of $D$ is called isotropic if $Q(\gamma) = 0 + \Z$. 
A subgroup $H$ of $D$ is called isotropic if all elements of $H$ are isotropic.
If $H$ is an isotropic subgroup we put
$D_H := H^\bot / H$. Then, $\D_H := (D_H, Q_H)$ with $Q_H(\gamma + H) := Q(\gamma)$ becomes
a discriminant form and satisfies $\sign(\D_H) = \sign(\D)$ and $|D_H| = |D|/|H|^2$.
(The proof of this assertion is left to the reader).
When isotropic subgroups $H_1, ..., H_n$ are given we just write $\D_i$ in place of $\D_{H_i}$
and $D_i$ in place of $H_i^\bot/H_i$.

\noindent Recently, operators of the form
\begin{equation}
\uparrow^\text{init}_H : \vvmf{\D_H} \to \vvmf{\D}, ~~
     \sum_{\a \in D_H} G_\a \e_\a \mapsto G\!\uparrow^\text{init}_H \,:= \sum_{\gamma \in H^\bot} G_{\gamma + H}\, \e_\gamma
\label{eq:initUp}
\end{equation}
have gained attention. 
Abstractly, these operators are expected to replace the lifting process for 
dividing levels in the scalar valued case, hence giving rise to a vector valued oldform/newform theory.
They have been used for example, to study in which cases 
certain orthogonal modular forms arise as Borcherds lifts (see \cite{bruinier-converse}) 
and under which conditions a vector valued modular form is induced by a scalar valued one (see \cite{sch:weil2}).
There is also a 'converse' map:
\begin{equation}
\downarrow^\text{init}_H: \vvmf{\D} \to \vvmf{\D_H}, ~~
     \sum_{\gamma \in D} F_\gamma \e_\gamma \mapsto F\!\!\downarrow^\text{init}_H\, := \sum_{\a \in D_H} \left(\sum_{\gamma \in \a} F_\gamma\right) \e_\a
\label{eq:initDown}
\end{equation}

\noindent (Remark that it is not clear that these operators really map vector valued modular forms 
to vector valued modular forms again; we will prove it in the next section). 
We write them with a superscript 'init' for 'initial' in order not to confuse them 
with their 'algebraic' parts, see Sec. \ref{sec:separation}.

Following the ideas in the scalar valued case we define old- and newforms:
Take isotropic subgroups $H_1, ..., H_k$ of $D$. We define the space 
of vector valued oldforms w.r.t. $H_1, ..., H_k$ to be
$$\vvof{\D}{H_1, ..., H_k} := \vvcf{\D_1}\!\uparrow^\text{init}_{H_1} + ... + \vvcf{\D_k}\!\uparrow^\text{init}_{H_k}$$
Analogously, the space of newforms is
$$\vvnf{\D}{H_1, ..., H_k} := \left(\vvof{\D}{H_1, ..., H_k}\right)^\bot$$
where the orthogonal complement is taken with respect to the Petersson scalar product for vector valued modular forms.

\section{Separarion of the up and down maps}
\label{sec:separation}
In this section we will separate the up and down maps from the introduction into two parts. 
In order not to confuse them we will call the up/down maps on modular forms $\uparrow^{\text{func}}_H$ 
and $\downarrow^{\text{func}}_H$ if in doubt.
It turns out that they can be written as $\uparrow^{\text{func}}_H = \uparrow_H \otimes \id$ for 
some \C-linear, purely algebraic map $\uparrow$ (and similarly with $\downarrow$). 
These maps are of crucial importance for the characterization of oldforms.

\noindent Generally speaking, given vector spaces $V, V_1, ..., V_n$ and vector space homomorphisms
$\alpha_i : V \to V_i$ then we denote by $\alpha := (\alpha_1 , ... , \alpha_n)$ the homomorphism 
from $V$ to $V_1 \oplus ... \oplus V_n$ given by 
$\alpha(v) := (\alpha_1(v), ..., \alpha_n(v))$.
Conversly, given vector spaces $V, V_1, ..., V_n$ and vector space homomorphisms
$\beta_i : V_i \to V$ then we denote by $\beta := \beta_1 + ... + \beta_n$ the homomorphism 
from $V_1 \oplus ... \oplus V_n$ to $V$ given by 
$\beta(v_1, ..., v_n) := \beta_1(v_1) + ... + \beta_n(v_n)$.

\noindent It is easy to see that
\begin{rmk}
\label{rmk:oldforms:sumsHoms}
If $\phi, \phi_1, ..., \phi_n$ are representations of some group $G$ 
on $V, V_1, ..., V_n$. We endow $V_1 \oplus ... \oplus V_n$ with the 
representation $\phi_1 \oplus ... \oplus \phi_n$.
\begin{enumerate}[(a)]
  \item If all $\alpha_i$ are homomorphisms of representations then so is $\alpha = (\alpha_1, ..., \alpha_n)$
  \item If all $\beta_i$ are homomorphisms of representations then so is $\beta = \beta_1 + ... + \beta_n$.
\end{enumerate}
\end{rmk}

\begin{dfn}
\label{dfn:oldforms:algMaps}
Let $\D = (D, Q)$ be a discriminant form and $H$ be an arbitrary isotropic subgroup.
We let $\pi : H^\bot \to D_H$ denote the projection $\pi(\gamma) = \gamma + H$ and
we put $\downarrow_H : \C[D] \to \C[D_H]$ to be the \C--linear map
$$\downarrow_H \bigg(\sum_{\gamma \in D} c_\gamma \e_\gamma \bigg) 
    := \sum_{\a \in D_H} \bigg(\sum_{\gamma \in \pi^{-1}(\a)} c_\gamma \bigg)~~\e_\a$$
Further we define a \C--linear map $\uparrow_H : \C[D_H] \to \C[D]$ as
$$\uparrow_H\bigg(\sum_{\a \in D_H} c_\a \e_\a\bigg) 
    := \sum_{\gamma \in H^\bot} c_{\gamma + H} \e_\gamma$$

\noindent When isotropic subgroups $H_1, ..., H_n$ are given we just write $\D_i$ in place of $\D_{H_i}$,
$\downarrow_i$ in place of $\downarrow_{H_i}$ and similarly with the up arrow maps
and we let 
\begin{align*}
  \downarrow_{H_1, ..., H_n}              &:= (\downarrow_1, ..., \downarrow_n)\\
  \uparrow_{H_1, ..., H_n}                 &:=\, \uparrow_1 + ... + \downarrow_n\\
\end{align*}
and drop the $H_i$ from the notation as they will be clear from the context.
If no isotropic subgroups are given, then we just write $\uparrow$, $\downarrow$ 
as associated (in the sense above) to all isotropic subgroups in $D$.
\end{dfn}

\begin{lem}
\label{lem:oldforms:vIsHom}
Let $\D = (D, Q)$ be a discriminant form. Let $\rho : \SL_2(\Z) \to \C[D]$ denote the Weil representation of $\D$.
\begin{enumerate}[(a)]
  \item 
    Let $H$ be an arbitrary isotropic subgroup and let $\eta$ be the Weil rep. of $\D_H$.
    Then the maps $\downarrow_H, \uparrow_H$ are homomorphisms of the Weil representations, i.e.
          \[ 
            \xymatrix{ \C[D]\ar[rr]^{\rho(M)}  \ar@{<->}[d]_{\downarrow_H}^{\uparrow_H}     & & \C[D] \ar@{<->}[d]^{\downarrow_H}_{\uparrow_H} \\
                       \C[D_H]\ar[rr]^{\eta(M)}        &  & C[D_H] } 
          \]
        commutes for every $M \in \SL_2(\Z)$.
        \label{lem:oldforms:vIsHom:alghoms}
  \item 
	Let $H_1, ..., H_n$ be arbitrary isotropic subgroups of $D$ and let $\rho_i$ be the Weil representation of $\D_i$ for $i=1,...,n$.
	Then $\downarrow$ and $\uparrow$ are homomorphisms of representations, i.e.
          \[ 
            \xymatrix{ \C[D]\ar[rr]^{\rho(M)}  \ar@{<->}[d]_{\downarrow}^{\uparrow}     & & \C[D] \ar@{<->}[d]^{\downarrow}_{\uparrow} \\
                       \C[D_{H_1}] \oplus ... \oplus \C[D_{H_n}]\ar[rr]^{(\rho_1 \oplus ... \oplus \rho_n)(M)}        &  & \C[D_{H_1}] \oplus ... \oplus \C[D_{H_n}] } 
          \]
        commutes for every $M \in \SL_2(\Z)$.
        \label{lem:oldforms:vIsHom:sumhoms}
\end{enumerate}
\end{lem}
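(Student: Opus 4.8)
The plan is to reduce everything to checking commutativity on the generators $S$ and $T$ of $\SL_2(\Z)$, since all four maps in question are $\C$-linear and the representations are determined by their values on $S$ and $T$. For part (b), once part (a) is established, I would simply invoke Remark \ref{rmk:oldforms:sumsHoms}: $\downarrow = (\downarrow_1, \dots, \downarrow_n)$ is a homomorphism into the direct sum because each $\downarrow_i$ is one by (a), and dually $\uparrow = \uparrow_1 + \dots + \uparrow_n$ is a homomorphism out of the direct sum. So the real content is entirely in part (a). Also, it suffices to treat $\downarrow_H$ and then obtain $\uparrow_H$ by an adjointness/transpose argument: with respect to the canonical orthonormal bases $(\e_\gamma)$ and $(\e_\a)$, one checks directly from the definitions that $\uparrow_H$ is the transpose of $\downarrow_H$ (the matrix entry is $1$ exactly when $\gamma \in H^\bot$ and $\pi(\gamma) = \a$, and $0$ otherwise). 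Since the Weil representation is unitary, $\rho(M)^{-1} = \rho(M)^\ast = \overline{\rho(M)}^{\,T}$, and an intertwining relation for $\downarrow_H$ transposes to one for $\uparrow_H$; I would spell this out carefully but expect it to be routine.

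For the core computation I would verify the two squares
\[
\eta(T)\circ\, \downarrow_H \;=\; \downarrow_H \circ\, \rho(T), \qquad
\eta(S)\circ\, \downarrow_H \;=\; \downarrow_H \circ\, \rho(S)
\]
by evaluating both sides on a basis vector $\e_\gamma$, $\gamma \in D$. For $T$: if $\gamma \notin H^\bot$ then $\downarrow_H \e_\gamma$ picks out the coset $\pi(\gamma')$ only for $\gamma'\in H^\bot$, so I should instead split into the case $\gamma \in H^\bot$ (where $\downarrow_H \e_\gamma = \e_{\gamma+H}$ and the identity $Q_H(\gamma+H) = Q(\gamma)$ from the definition of $\D_H$ makes it immediate) and the case $\gamma \notin H^\bot$ — but actually $\downarrow_H \e_\gamma = \e_{\gamma + H}$ whenever $\gamma \in H^\bot$ and otherwise $\gamma$ contributes to the coset $\gamma+H$ only if $\gamma \in H^\bot$, so in fact $\downarrow_H$ on a single basis vector is either $\e_{\gamma+H}$ (if $\gamma\in H^\bot$) or $0$ (if $\gamma\notin H^\bot$); one must double-check that $\downarrow_H$ as defined only ever sums over $\gamma \in \pi^{-1}(\a) \subseteq H^\bot$, which it does, so all terms $\e_\gamma$ with $\gamma\notin H^\bot$ are simply discarded. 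The $T$-case then follows from $e(Q(\gamma)) = e(Q_H(\gamma+H))$. For the $S$-case I would compute $\downarrow_H \rho(S)\e_\gamma = c_\D \sum_{\beta \in D} e(-(\gamma,\beta)) \downarrow_H \e_\beta = c_\D \sum_{\beta\in H^\bot} e(-(\gamma,\beta))\e_{\beta+H}$, regroup the sum over cosets $\a = \beta + H \in D_H$ using that $(\gamma,\beta)$ depends on $\beta$ only modulo $H$ when $\gamma \in H^\bot$ (this is where isotropy of $H$ and the definition $H^\bot = \{\delta : (\delta, H) = 0\}$ enter), and compare with $\eta(S)\downarrow_H\e_\gamma$; one also needs $c_{\D_H} = c_\D\cdot |H|/\sqrt{|D|}\cdot\sqrt{|D_H|}$-type bookkeeping, which follows from $|D_H| = |D|/|H|^2$ and $\sign(\D_H) = \sign(\D)$ — both recorded in the excerpt — together with $|\pi^{-1}(\a)| = |H|$.

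The main obstacle is the $S$-square, and within it the normalization constant: one must confirm that the factor of $|H|$ arising from collapsing each coset of size $|H|$ in the sum over $H^\bot$ is exactly absorbed by the ratio $c_\D / c_{\D_H}$, using $\sqrt{|D|}/\sqrt{|D_H|} = |H|$. The other delicate point is justifying that when $\gamma \in H^\bot$, the pairing $(\gamma, \beta) \bmod \Z$ is constant on the coset $\beta + H$ — this is precisely the statement that $\gamma \in H^\bot$ means $(\gamma, h) = 0$ for all $h\in H$, combined with bilinearity; and dually that the coset $\gamma + H$ itself is a well-defined argument, i.e. that $\rho(S)$ does not move $\gamma$ out of $H^\bot$ in a way that breaks the bookkeeping (it doesn't, since we are summing over all $\beta$ and only retaining $\beta \in H^\bot$). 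Everything else is a matter of carefully tracking finite sums over cosets, and once (a) is in hand, (b) is a one-line appeal to Remark \ref{rmk:oldforms:sumsHoms}.
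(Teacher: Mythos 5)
Your proposal matches the paper's proof almost exactly: reduce to the generators $S$ and $T$, verify the $T$-square from $Q_H(\gamma+H)=Q(\gamma)$, verify the $S$-square by regrouping the Fourier-type sum over cosets of $H$ and using the normalization identity $|H|\,c_\D = c_{\D_H}$ (which follows from $\sign(\D_H)=\sign(\D)$ and $|D_H|=|D|/|H|^2$), and reduce part (b) to part (a) via Remark~\ref{rmk:oldforms:sumsHoms}. One small addition you make beyond the paper: the paper only writes out the intertwining relation for $\downarrow_H$ and is silent about $\uparrow_H$; your transpose/unitarity argument for deducing the $\uparrow_H$ square is a welcome explicit step, and it does work once you note that $\uparrow_H$ has real $0/1$ entries, so the complex conjugates that appear from $\rho(M)^T=\overline{\rho(M^{-1})}$ can be stripped off at the end, and one then replaces $M^{-1}$ by $M$.

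The one spot where your sketch of the $S$-square is genuinely incomplete is the case $\gamma\notin H^\bot$. There $\eta(S)\downarrow_H\!\e_\gamma = 0$ because $\downarrow_H\!\e_\gamma=0$, so you must show that $\downarrow_H\rho(S)\e_\gamma$ vanishes as well. After regrouping one obtains
\[
\downarrow_H\!\!\bigl(\rho(S)\e_\gamma\bigr) \;=\; c_\D \sum_{\a\in D_H} e\bigl(-(\gamma,\a_0)\bigr)\Bigl(\sum_{h\in H} e\bigl(-(\gamma,h)\bigr)\Bigr)\e_\a ,
\]
and the inner sum is a character sum over $H$ for the character $h\mapsto e(-(\gamma,h))$, which is nontrivial precisely when $\gamma\notin H^\bot$ and therefore vanishes. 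Your writeup only discusses the branch ``when $\gamma\in H^\bot$'' and never explains why the other branch gives zero; that orthogonality-of-characters step is exactly what the paper supplies at this point, and it is not optional --- without it the regrouping has no reason to collapse. Everything else in your proposal is sound.
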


\begin{proof}
\noindent \eqref{lem:oldforms:vIsHom:alghoms}:
We need to show that for all $x \in \C[D]$ and all $M \in \SL_2(\Z)$,
  $$\eta(M)\downarrow_H\!\!(x) =\,\, \downarrow_H\!\!(\rho(M)x)$$
Since all maps $\downarrow_H, \uparrow_H, \rho(M), \eta(M)$ are $\C$-linear, 
it suffices to show the assertion for $x = \e_\gamma$.
Since $\SL_2(\Z)$ is generated by $S, T$, and both, $\rho, \eta$ are left actions, it suffices to show the assertion
for $M=S, M=T$. 

\noindent On $M=T$:
\begin{align*}
\downarrow_H\!\!(\rho(T)\e_\gamma) &=\,\, \downarrow_H\!\!(e(Q(\gamma)) \e_\gamma)
   = e(Q(\gamma)) \downarrow_H\!\!(\e_\gamma)
	 = e(Q(\gamma)) \e_{\gamma + H} \\
	&= e(Q_H(\gamma + H)) \e_{\gamma + H}
   = \eta(T) \e_{\gamma + H}
	 ~\!= \eta(T) \downarrow_H\!\!(\e_\gamma)
\end{align*}

\noindent On $M=S$:
we write 
\begin{equation}
\e_\gamma = \sum_{\delta \in \D} c_\delta \e_\delta ~\text{with}~ c_\delta = \one_{\gamma = \delta}
\label{eq:oldforms:vIsHom:alghoms:xegamma}
\end{equation}
then
\begin{align*}
  \downarrow_H\!\!(\rho(S) \e_\gamma) 
    &=\, \downarrow_H\!\!(c_\D \sum_{\mu \in D} e(-\gamma, \mu) \e_\mu) \\
    &= c_\D \sum_{\mu \in D} e(-\gamma, \mu) \downarrow_H\!\!(\e_\mu) \\
    &= c_\D \sum_{\mu \in D} e(-\gamma, \mu) \sum_{\a \in \D_H} \left(\sum_{\lambda \in \a} c_\lambda \right) \e_\a \\
    &\stackrel{\eqref{eq:oldforms:vIsHom:alghoms:xegamma}}{=} 
		  c_\D \sum_{\mu \in D} e(-\gamma, \mu) \sum_{\a \in \D_H} \left(\sum_{\lambda \in \a} \one_{\lambda=\mu} \right) \e_\a \\
    &= c_\D \sum_{\mu \in D} e(-\gamma, \mu) \sum_{\a \in \D_H} \one_{\mu \in \a} \e_\a \\
    &= c_\D \sum_{\a \in \D_H} \left(\sum_{\mu \in \a} e(-\gamma, \mu) \right) \e_\a
\end{align*}
Let us select a fixed representative $\a_0 \in \a \in D_H$ for every class. Then this expression can be rewritten to
\begin{align*}
  &= c_\D \sum_{\a \in \D_H} \left(\sum_{h \in H} e(-\gamma, \a_0 + h) \right) \e_\a \\
  &= c_\D \sum_{\a \in \D_H} e(-\gamma, \a_0) \left(\sum_{h \in H} e(-\gamma, h) \right) \e_\a
\end{align*}

\noindent In the case that $\gamma \notin H^\bot$, the map $\chi : \mu \mapsto e(-\gamma, \mu)$ is a 
nontrivial character of the group $H$. As for every nontrivial character 
$\psi$ of a finite group $A$, we have $\sum_{a \in A} \psi(a) = 0$, the expression just evaluates to $\sum 0 = 0$.
This coincides with $\eta(S)\downarrow_H\!\!(x)$ as $\downarrow_H\!\!(\e_\gamma) = 0$ in this case as well.
Now let $\gamma \in H^\bot$. Then the character $\chi$ is trivial and we can continue
\begin{align*}
    &= c_\D \sum_{\a \in \D_H} e(-\gamma, \a_0) \left(\sum_{h \in H} e(-\gamma, h) \right) \e_\a \\
    &= c_\D \sum_{\a \in \D_H} e(-\gamma, \a_0) |H| \e_\a \\
    &= |H|c_\D  \sum_{\a \in \D_H} e_H(-\gamma + H, \a_0 + H) \e_\a \\
    &= |H|c_\D  \sum_{\a \in \D_H} e_H(-\gamma + H, \a) \e_\a
\end{align*}
We have $|H|c_\D = c_{\D_H}$:
$$|H|c_\D = \frac{1}{1/\sqrt{|H|^2}} \frac{e(\sign(\D)/8)}{\sqrt{|D|}} = \frac{e(\sign(\D)/8)}{\sqrt{|D|/|H|^2}} = c_{\D_H}$$
as $\sign(\D) = \sign(\D_H)$ and $|D_H| = |D|/|H|^2$. Finally,
$$  \downarrow_H\!\!(\rho(S) \e_\gamma) 
    = c_{\D_H}  \sum_{\a \in \D_H} e_H(-[\gamma + H], \a) \e_\a
		= \eta(S) \e_{\gamma + H} = \eta(S) \downarrow_H\!\!(\e_\gamma)$$

\noindent \eqref{lem:oldforms:vIsHom:sumhoms}:
This follows from Rmk. \ref{rmk:oldforms:sumsHoms} and \eqref{lem:oldforms:vIsHom:alghoms}.

\end{proof}

\noindent One could wonder about the naming convention for our operators $\downarrow_H$.
The similarity to $\downarrow_H^{\text{init}}$ is no coincidence. In fact, our $\downarrow_H$ operators can be seen to be the 'algebraic part' of
the operators as introduced in the section before. We state this more precisely now:

\noindent Vector valued modular forms for $\D$ can be viewed as elements of the more general vector space
$\V_{(\D, k)} := M_k(\Gamma(N)) \otimes \C[D]$, the isomorphism between maps from $\H$ to $\C[D]$ 
(having modular forms in every component) and $\V_{(\D, k)}$ being
$$ \Phi_\D : F = \sum_{\gamma \in D} F_\gamma \e_\gamma \mapsto \sum_{\gamma} F_\gamma \otimes \e_\gamma$$
On $\V_{(\D, k)}$ there are two group actions. Firstly, $\SL_2(\Z)$ acts on $M_k(\Gamma(N))$ 
from the right by the usual slash action $f|_M = (c\tau + d)^{-k} f(\tfrac{a\tau + b}{c\tau + d})$
for $M = \tmat{a&b\\c&d}$. This gives rise to the right action $|^\otimes_M := |_M \otimes\, \id_{\C[D]}$.
The Weil representation $\rho$ is a left action of $\SL_2(\Z)$ on $\C[D]$ which gives rise to 
a the left action $\Psi = \id_{M_k(\Gamma(N))} \otimes \rho$ on $\V_{(\D, k)}$.
The set of vector valued modular forms is now
\begin{align*} 
  \M_{(\D, k)} := \{F \in \V_k :~~ F|^\otimes_M = \Psi(M)F ~\text{and $F$ bounded at $\infty$}\}
\end{align*}
in the sense that $\Phi(\vvmf{\D}) = \M_{(\D, k)}$.

\noindent Our $\downarrow_H, \uparrow_H$ operators give rise to maps 
$\downarrow^\text{func}_H : \V_{(\D, k)} \to \V_{(\D_H, k)}$ and
$\uparrow^{\text{func}}_H : \V_{(\D_H, k)} \to \V_{(\D, k)}$ by putting
$\downarrow^{\text{func}}_H \,:=\, \downarrow_H \otimes \, \id_{M_k(\Gamma(N))}$
and
$\uparrow^{\text{func}}_H \, := \, \uparrow_H \otimes \, \id_{M_k(\Gamma(M))}$
where $M$ is the level of $\D_H$.
We then put
\begin{align*}
  \downarrow^{\text{func}} &:= (\downarrow^{\text{func}}_1, ..., \downarrow^{\text{func}}_n)\\
  \uparrow^{\text{func}}   &:= \,\uparrow^{\text{func}}_1 + ... + \uparrow^{\text{func}}_n
\end{align*}

\noindent Unwinding the definitions, we see that 
$$ \Phi_{(\D_H, k)}(F\!\downarrow_H^{\text{init}}) = \Phi_{(\D, k)}(F)\!\downarrow^\text{func}_H ~~ \text{and} ~~ 
   \Phi_{(\D, k)}(G\!\uparrow_H^{\text{init}}) = \Phi_{(\D_H, k)}(G)\!\uparrow^\text{func}_H
$$
where $\uparrow_H^{\text{init}}, \downarrow_H^{\text{init}}$ are the initial definitions as given in
\eqref{eq:initUp}, \eqref{eq:initDown}. 
So, if we interpret the operators on the right space then the up/down arrows on functions 
are just the algebraic up and down maps tensored with id.
Hence, we will use the superscripts 'func' and 'init' interchangeably.
\noindent We can now see the reason
why $\uparrow^{\text{func}}, \downarrow^{\text{func}}$ map vector valued modular forms to such again: they come from
purely algebraic homomorphisms of Weil representations.

\begin{lem}
\label{lem:oldforms:tensorIsHom}
  \begin{enumerate}[(a)]
  \item The maps $\downarrow^{\text{func}}_H, \uparrow^{\text{func}}_H$ are homomorphisms 
        of the tensored right slash action, i.e.
          \[ 
            \xymatrix{ \V_{(\D, k)}\ar[rr]^{| \otimes \id_{\C[D]}}  \ar[d]_{\downarrow^{\text{func}}_H}     & & \V_{(\D, k)} \ar[d]^{\downarrow^{\text{func}}_H} \\
                       \V_{(\D_H, k)}\ar[rr]^{| \otimes \id_{\C[D_H]}}        &  & \V_{(\D_H, k)} } 
          \]
        commutes for every $M \in \SL_2(\Z)$.
        \label{lem:oldforms:tensorIsHom:funcHoms}
  \item $\downarrow^{\text{func}}(\M_{(\D, k)}) \subset \M_{(\D_H, k)}$ and
        $\uparrow^{\text{func}}(\M_{(\D_H, k)}) \subset \M_{(\D, k)}$.
        \label{lem:oldforms:tensorIsHom:funchoms}
  \end{enumerate}
\end{lem}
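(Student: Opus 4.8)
The plan is to deduce both parts from Lemma \ref{lem:oldforms:vIsHom} together with the elementary fact that two operators acting on different tensor factors of $\V_{(\D,k)} = M_k(\Gamma(N)) \otimes \C[D]$ commute.

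For part (a): by construction $\downarrow^{\text{func}}_H$ (respectively $\uparrow^{\text{func}}_H$) acts as $\downarrow_H$ (respectively $\uparrow_H$) on the group-ring factor and trivially on the $M_k(\Gamma(N))$-factor, whereas the tensored right slash action $|^\otimes_M = |_M \otimes \id$ acts as $|_M$ on the $M_k(\Gamma(N))$-factor and trivially on the group-ring factor. Hence, evaluated on an elementary tensor $f \otimes \e_\gamma$, both routes around the square return $(f|_M) \otimes \downarrow_H(\e_\gamma)$, and $\C$-linearity finishes it; the computation for $\uparrow^{\text{func}}_H$ is verbatim the same. (In symbols: for linear maps $A$ on $M_k(\Gamma(N))$ and $B$ on $\C[D]$ one has $(A \otimes \id)(\id \otimes B) = A \otimes B = (\id \otimes B)(A \otimes \id)$.)

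For part (b): let $F \in \M_{(\D,k)}$, so $F|^\otimes_M = \Psi(M)F$ for every $M \in \SL_2(\Z)$, with $\Psi(M) = \id \otimes \rho(M)$; write $\Psi_H$ for the analogous left action $\id \otimes \eta(\cdot)$ on $\V_{(\D_H,k)}$, $\eta$ being the Weil representation of $\D_H$. I would then compute
$$\big(\downarrow^{\text{func}}_H(F)\big)\big|^\otimes_M \ =\ \downarrow^{\text{func}}_H\big(F|^\otimes_M\big)\ =\ \downarrow^{\text{func}}_H\big(\Psi(M)F\big)\ =\ \Psi_H(M)\big(\downarrow^{\text{func}}_H(F)\big),$$
where the first equality is part (a) of the present lemma, the second is the transformation law of $F$, and the third is part (a) of Lemma \ref{lem:oldforms:vIsHom} — the identity $\downarrow_H \rho(M) = \eta(M) \downarrow_H$ on the group-ring factor — tensored with the identity on $M_k(\Gamma(N))$. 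Thus $\downarrow^{\text{func}}_H(F)$ transforms correctly, and the same computation with the $\uparrow_H$-half of Lemma \ref{lem:oldforms:vIsHom} handles $\uparrow^{\text{func}}_H$.

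It then remains to verify the two analytic membership conditions. First, every component of $\downarrow^{\text{func}}_H(F)$ or $\uparrow^{\text{func}}_H(F)$ is a fixed finite $\C$-linear combination of components of $F$, hence holomorphic and bounded at every cusp, so the resulting vector-valued function is holomorphic and bounded at $\infty$. Second, landing in $\V_{(\D_H,k)} = M_k(\Gamma(N_H)) \otimes \C[D_H]$ — where $N_H$ denotes the (possibly smaller) level of $\D_H$ — requires each component to be modular for $\Gamma(N_H)$ rather than merely for $\Gamma(N)$; this follows by specializing the transformation law just established to $\gamma \in \Gamma(N_H)$ and invoking that the Weil representation $\eta$ is trivial on $\Gamma(N_H)$, which forces $\downarrow^{\text{func}}_H(F)|_\gamma = \downarrow^{\text{func}}_H(F)$ componentwise. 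Hence $\downarrow^{\text{func}}_H(F) \in \M_{(\D_H,k)}$ and $\uparrow^{\text{func}}_H(F) \in \M_{(\D,k)}$, and the statements for the combined maps $\downarrow^{\text{func}} = (\downarrow^{\text{func}}_1, \dots, \downarrow^{\text{func}}_n)$ and $\uparrow^{\text{func}} = \uparrow^{\text{func}}_1 + \dots + \uparrow^{\text{func}}_n$ follow componentwise or directly from Remark \ref{rmk:oldforms:sumsHoms}. There is no serious obstacle here; the only point genuinely needing care is exactly this level bookkeeping, since the bare tensor-factor argument a priori only places the image in $M_k(\Gamma(N)) \otimes \C[D_H]$, and it is the triviality of the Weil representation of $\D_H$ on $\Gamma(N_H)$ that bridges the gap.
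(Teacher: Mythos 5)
Your argument is correct and essentially matches the paper's proof: part (a) is the same tensor-factor commutation on elementary tensors, and part (b) chains together part (a), the transformation law of $F$, and Lemma \ref{lem:oldforms:vIsHom} in the same way (the paper simply writes the chain of equalities in the opposite direction). Your explicit treatment of the level bookkeeping --- that the components land in $M_k(\Gamma(N_H))$ via the triviality of $\eta$ on $\Gamma(N_H)$ --- spells out a point the paper leaves implicit.
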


\begin{proof}
\noindent \eqref{lem:oldforms:tensorIsHom:funcHoms}
This is trivially true: Let $A$ be the level of $\D_H$ then for 
$F = \sum_\gamma F_\gamma \otimes \e_\gamma$ and $M \in \SL_2(\Z)$ we get
\begin{align*}
  (\downarrow^{\text{func}}_H\!\!(F)) &. [~\big| \otimes \id_{\C[D_H]}](M) \\
   &= \sum_\gamma (F_\gamma \,\otimes \downarrow_H\!\!(\e_\gamma)) . [~\big|_M \otimes \id_{\C[D_H]}] \\
   &= \sum_\gamma (F_\gamma)|_M \,\otimes \downarrow_H\!\!(\e_\gamma) \\
   &= \sum_\gamma \downarrow^{\text{func}}_H\!\!((F_\gamma)|_M \otimes \e_\gamma) \\
   &= \downarrow^{\text{func}}_H \!\!\left(\sum_\gamma (F_\gamma \otimes \e_\gamma).[~\big| \otimes \id_{\C[D]}](M)\right) \\
   &= \downarrow^{\text{func}}_H \!\!\left( \left[\sum_\gamma F_\gamma \otimes \e_\gamma\right].[~\big| \otimes \id_{\C[D]}](M)\right) \\
   &= \downarrow^{\text{func}}_H \!\!\left(F.[~\big| \otimes \id_{\C[D]}](M)\right)
\end{align*}

\noindent \eqref{lem:oldforms:tensorIsHom:funchoms}
For $F = \sum_\gamma F_\gamma \otimes \e_\gamma$ we get
\begin{align*}
  (\eta \otimes \id)(M) & \downarrow^{\text{func}}_H\!\!(F) \\
    &= \sum_\gamma (\id \otimes \eta)(M) [F_\gamma \,\otimes \downarrow_H\!\!(\e_\gamma)] \\
    &= \sum_\gamma [F_\gamma \otimes \eta(M)\downarrow_H\!\!(\e_\gamma)] \\
    &= \sum_\gamma [F_\gamma \,\otimes \downarrow_H \!\rho(M)(\e_\gamma)] 
			  & \text{By Lem. \ref{lem:oldforms:vIsHom}\eqref{lem:oldforms:vIsHom:alghoms}} \\
    &= \sum_\gamma \downarrow^{\text{func}}_H [F_\gamma \otimes  \rho(M)(\e_\gamma)] \\
    &= \downarrow^{\text{func}}_H \sum_\gamma (\id \otimes \rho)(M)[F_\gamma \otimes  \e_\gamma] \\
    &= \downarrow^{\text{func}}_H \Phi(M) F \\
    &\stackrel{\eqref{lem:oldforms:tensorIsHom:funcHoms}}{=} 
       \downarrow^{\text{func}}_H (F.[~\big| \otimes \id](M))\\
    &= (\downarrow^{\text{func}}_H F).[~\big| \otimes \id](M)\\
\end{align*}
\noindent The other inclusion is proved similarly.
\end{proof}

\noindent We have shown that the up and down maps are really well defined, i.e. that the turn vector valued modular forms into such again.

\section{Detecting oldforms}
\label{sec:characterization}

In this section we will state and prove a detection mechanism for vector valued oldforms.

\begin{dfn}
\label{dfn:evaluation-map}
Let $N$ be the level of $D$. For $F = \sum_{\gamma \in D} F_\gamma \e_\gamma \in \vvmf{D}$ 
we define the $\C$-linear map
  $$\F : \C[D] \to M_k(\Gamma(N)), \e_\gamma \mapsto F_\gamma$$
\end{dfn}

\noindent $\F$ can be viewed as a evaluation map.

\noindent The crucial condition for $F$ to be an oldform now is $\ker(\downarrow) \subset \ker(\F)$. 
This simply states that "all relations among the components of $F$ that we could 
expect if $F$ {\bfseries was} an oldform (with respect to the $H_1, ..., H_n$) do really exist",
see the direction $"\Leftarrow"$ in the proof of Thm \ref{thm:oldforms:detectionThm}.

\noindent We recall the following simple lemma from basic representation theory
\begin{lem} [Maschke]
\label{lem:oldforms:maschke}
Suppose $G$ is a finite group, $K$ is a field with
  $$\chr(K) = 0 ~\text{or}~ \gcd(\chr(K), |G|) = 1$$
Let $\rho : G \to \GL(V)$ be a finite dimensional representation over $K$ and let $U \subset V$ 
be a $G$-invariant subspace (meaning that for every $u \in U$, 
$\rho(g)u \in U$ for all $g \in G$), then there exists a complementary 
$G$-invariant subspace, i.e. there exists a subspace $W \subset V$ such that
$V = W \oplus U$ and $W$ is $G$-invariant.
\end{lem}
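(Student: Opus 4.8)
The plan is to use the classical averaging trick, which is essentially the only content of the statement. First I would forget the group structure for a moment and pick \emph{any} linear complement $W_0$ of $U$ in $V$, so that $V = U \oplus W_0$, and let $p_0 : V \to V$ denote the associated projection onto $U$ along $W_0$, i.e. the $K$-linear map with $\im(p_0) = U$ and $p_0|_U = \id_U$. Such a $W_0$ exists by ordinary linear algebra, but $p_0$ need not be $G$-equivariant; the idea is to repair this defect by symmetrizing over $G$.

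Next I would define
\[ p := \frac{1}{|G|} \sum_{g \in G} \rho(g)\, p_0\, \rho(g)^{-1} : V \to V . \]
The scalar $\tfrac{1}{|G|}$ makes sense precisely because $\chr(K) = 0$ or $\gcd(\chr(K), |G|) = 1$, so that $|G|$ is invertible in $K$; this is the one and only place where the hypothesis on $K$ is used, and it is exactly what breaks down for, say, $K = \mathbb{F}_p$ and $G = \Z/p\Z$.

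Then there are three routine verifications. First, $\im(p) \subset U$: each $\rho(g)^{-1}$ maps $V$ into $V$, then $p_0$ maps into $U$, then $\rho(g)$ maps $U$ back into $U$ by $G$-invariance of $U$, and a sum of vectors in $U$ lies in $U$. Second, $p|_U = \id_U$: for $u \in U$ we have $\rho(g)^{-1} u \in U$, hence $p_0 \rho(g)^{-1} u = \rho(g)^{-1} u$, hence $\rho(g)\, p_0\, \rho(g)^{-1} u = u$, and averaging $|G|$ copies of $u$ yields $u$. Together these two facts say $p$ is a projection of $V$ onto $U$, in particular $p^2 = p$. Third, $G$-equivariance: for $h \in G$,
\begin{align*}
  \rho(h)\, p &= \frac{1}{|G|}\sum_{g \in G} \rho(hg)\, p_0\, \rho(g)^{-1} \\
  &= \frac{1}{|G|}\sum_{g' \in G} \rho(g')\, p_0\, \rho(g'^{-1}h) = p\, \rho(h),
\end{align*}
using the substitution $g' = hg$ together with the fact that $\rho$ is a homomorphism.

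Finally I would set $W := \ker(p)$. Since $p$ is a projection onto $U$ we get $V = \im(p) \oplus \ker(p) = U \oplus W$, and $W$ is $G$-invariant: if $w \in W$ then $p(\rho(h) w) = \rho(h)(p w) = 0$ by equivariance, so $\rho(h) w \in W$. This $W$ is the desired complementary $G$-invariant subspace. I do not expect any genuine obstacle here — the whole argument is the averaging identity — and the only point worth emphasizing is that invertibility of $|G|$ in $K$ is indispensable, which is exactly the hypothesis provided.
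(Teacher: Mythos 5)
Your proof is correct: it is the standard averaging argument for Maschke's theorem, and all three verifications (image in $U$, restriction to $U$ is the identity, $G$-equivariance via the substitution $g' = hg$) are carried out properly, with the hypothesis on $\chr(K)$ used exactly where it must be, to invert $|G|$. The paper itself states this lemma without proof (it is cited as ``a simple lemma from basic representation theory''), so there is nothing to compare against; your argument is precisely the classical one the author is implicitly appealing to.
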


\noindent An important corollary one can deduce from this lemma is that 
homomorphisms of subspaces can always be continued to the full space:
\begin{cor}
\label{lem:oldforms:complementarySubspace}
Suppose $G$ is a finite group, $K$ is a field with
  $$\chr(K) = 0 ~\text{or}~ \gcd(\chr(K), |G|) = 1$$
Let $\rho : G \to \GL(V), \eta : G \to \GL(W)$ be finite dimensional 
representations over $K$ and let $U \subset V$ be $G$-invariant. Assume further that
$\vartheta : U \to W$ is a $K$-linear homomorphism of 
representations $(U, \rho(G)|_U) \to (W, \eta)$ 
(i.e. we assume $\vartheta(\rho(g)u) = \eta(g) \vartheta(u)$ 
for all $u \in U, g \in G$). Then $\vartheta$ can be continued to
a homomorphism of representations $\Theta : (V, \rho) \to (W, \eta)$.
\end{cor}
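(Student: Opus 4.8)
The plan is to deduce this corollary from Maschke's lemma (Lem.~\ref{lem:oldforms:maschke}) in the standard way. First I would apply Maschke's lemma to the $G$-invariant subspace $U \subset V$ to obtain a $G$-invariant complement $W_0 \subset V$ with $V = U \oplus W_0$. This uses the hypothesis on $\chr(K)$ and $|G|$ exactly as stated. Then I would define $\Theta : V \to W$ by $\Theta := \vartheta \composition p$, where $p : V \to U$ is the projection onto $U$ along $W_0$; equivalently, $\Theta(u + w_0) := \vartheta(u)$ for $u \in U$, $w_0 \in W_0$. Since $W_0$ is $G$-invariant, the projection $p$ is a homomorphism of representations $(V, \rho) \to (U, \rho(G)|_U)$: indeed for $v = u + w_0$ we have $\rho(g)v = \rho(g)u + \rho(g)w_0$ with $\rho(g)u \in U$ and $\rho(g)w_0 \in W_0$, so $p(\rho(g)v) = \rho(g)u = \rho(g)(p(v))$.

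Next I would check the two required properties of $\Theta$. That $\Theta$ extends $\vartheta$ is immediate: for $u \in U$ we have $p(u) = u$, hence $\Theta(u) = \vartheta(u)$. That $\Theta$ is a homomorphism of representations follows by composing the two intertwining maps already established: for $v \in V$ and $g \in G$,
\[
\Theta(\rho(g)v) = \vartheta(p(\rho(g)v)) = \vartheta(\rho(g)p(v)) = \eta(g)\,\vartheta(p(v)) = \eta(g)\,\Theta(v),
\]
where the second equality is the intertwining property of $p$ and the third is the hypothesis that $\vartheta$ is a homomorphism of representations. $K$-linearity of $\Theta$ is clear since it is a composition of $K$-linear maps.

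I do not expect any genuine obstacle here; the statement is a textbook consequence of complete reducibility, and the only thing to be careful about is making the bookkeeping of the projection explicit (that $p$ intertwines the actions, which is where $G$-invariance of the complement $W_0$ is used). The mild subtlety worth a sentence in the write-up is that we do \emph{not} need $U$ itself to be a subrepresentation in any special position or $\vartheta$ to be injective or surjective — the argument works verbatim for an arbitrary homomorphism of representations out of an arbitrary invariant subspace, which is precisely the generality in which the corollary will be applied (with $U = \ker(\downarrow)$, or rather its role in the detection theorem).
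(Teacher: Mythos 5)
Your proof is correct and follows the same route as the paper: apply Maschke to get a $G$-invariant complement to $U$, define $\Theta$ by projecting onto $U$ and then applying $\vartheta$, and verify the intertwining property. The paper's version is terser (it just asserts the intertwining holds "as $\vartheta$ was and $E$ is $G$-invariant"), whereas you spell out explicitly that the projection itself is a homomorphism of representations, which is the useful bit of bookkeeping.
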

\begin{proof}
By Lemma \ref{lem:oldforms:maschke}, we can find a $G$-invariant complement $E$ to $U$. 
For $v \in V = E \oplus U$, i.e. $v = e + u$ we put $\Theta(e+u) := \vartheta(u)$.
Then $\Theta$ continues $\vartheta$ and it is a homomorphism of representations as
$\vartheta$ was and $E$ is $G$-invariant.
\end{proof}

\begin{thm}
\label{thm:oldforms:detectionThm}
Let $\D = (D, Q)$ be a discriminant form of even signature. Let $H_1, ..., H_n$ 
be arbitrary isotropic subgroups and $F \in \vvmf{\D}$, then $F$ is an oldform
with respect to the $H_1, ..., H_n$ if and only if $\ker(\downarrow_{H_1, ..., H_n}) \subset \ker(\F)$.
\end{thm}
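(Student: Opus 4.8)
The plan is to prove both implications by working in the purely algebraic setting $\C[D]$ and then tensoring with $M_k(\Gamma(N))$, exploiting the fact established in Lemma \ref{lem:oldforms:vIsHom} that $\uparrow$ and $\downarrow$ are homomorphisms of Weil representations. Write $\uparrow = \uparrow_1 + \dots + \uparrow_n : \bigoplus_i \C[D_i] \to \C[D]$ and $\downarrow = (\downarrow_1, \dots, \downarrow_n) : \C[D] \to \bigoplus_i \C[D_i]$. The conceptual heart of the argument is that, up to identifying $\M_{(\D,k)}$ with $\vvmf{\D}$ via $\Phi_\D$, the map $\F$ factors as the composition of $\Phi_\D(F)$ (as an element of $M_k(\Gamma(N)) \otimes \C[D]$, viewed as a linear map $\C[D]^\vee \to M_k(\Gamma(N))$, or more simply $\e_\gamma \mapsto F_\gamma$) — and $F$ being an oldform with respect to $H_1, \dots, H_n$ means precisely that $\Phi_\D(F)$ lies in the image of $\uparrow^{\text{func}} = \uparrow \otimes \id$.

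For the direction ``$\Rightarrow$'': if $F$ is an oldform, write $F = \sum_i G^{(i)}\!\uparrow^{\text{init}}_{H_i}$ with $G^{(i)} \in \vvcf{\D_i}$, i.e. $\Phi_\D(F) = \uparrow^{\text{func}}(\Phi(G^{(1)}), \dots, \Phi(G^{(n)}))$. Now suppose $x \in \ker(\downarrow)$, so $\downarrow_i(x) = 0$ for all $i$. Unwinding, $\F(x) = \sum_\gamma x_\gamma F_\gamma$, and substituting the oldform expansion of each $F_\gamma$ in terms of the components of $G^{(i)}$ and reindexing the sum over the fibers of $\pi_i : H_i^\bot \to D_i$, one finds that $\F(x)$ is a $\C$-linear combination of the quantities $\sum_{\gamma \in \pi_i^{-1}(\a)} x_\gamma$ — which are exactly the coefficients of $\downarrow_i(x)$, hence all zero. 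Thus $x \in \ker(\F)$. This is essentially a bookkeeping computation once one observes the adjunction-type relation between $\uparrow$ and $\downarrow$ relative to the pairing $(x, F) \mapsto \F(x)$.

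For the direction ``$\Leftarrow$'', which I expect to be the main obstacle: assume $\ker(\downarrow) \subset \ker(\F)$. Since $\downarrow$ is a homomorphism of Weil representations, $\ker(\downarrow)$ is a $\SL_2(\Z)$-invariant subspace of $\C[D]$, and so is $\ker(\F)$ — here one uses that the components of a vector valued modular form transform under $\rho$, so the relation $\F(\rho(M)x) = \F(x)|_M$ holds and a relation among the $F_\gamma$ is preserved by $\rho(M)$. Because $\C[D]/\ker(\downarrow) \cong \im(\downarrow) \subseteq \bigoplus_i \C[D_i]$ as representations, the inclusion of kernels means $\F$ descends to a homomorphism of representations $\overline{\F} : \im(\downarrow) \to M_k(\Gamma(N))$ — but $M_k(\Gamma(N))$ is infinite-dimensional and carries no finite-group action directly, so one should instead work componentwise or extend scalars. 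The clean route is to apply Corollary \ref{lem:oldforms:complementarySubspace}: viewing $\overline{\F}$ as defined on the subrepresentation $\im(\downarrow) \subset \bigoplus_i \C[D_i]$ and extending to a homomorphism of representations $\Theta : \bigoplus_i \C[D_i] \to M_k(\Gamma(N))$ (applying the corollary coordinate-by-coordinate in the target, or over a suitable finite-dimensional subspace containing the $F_\gamma$, where Maschke applies). Then $\Theta|_{\C[D_i]}$ corresponds to a vector valued modular form $G^{(i)} \in \vvmf{\D_i}$ by the evaluation-map dictionary (Def. \ref{dfn:evaluation-map} in reverse), cusp-form-ness being inherited since each $F_\gamma$, hence each component of $G^{(i)}$, is a cusp form when $F$ is — wait, the theorem statement only requires $F \in \vvmf{\D}$, so one works in $M_k$ throughout and the cusp condition is not needed here. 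Finally $\Theta \circ \downarrow = \F$ by construction, and unwinding the definitions of $\Theta$ on each $\C[D_i]$ shows $\F = \sum_i \mathcal{G}^{(i)} \circ \downarrow_i$, which — reading off coefficients $\e_\gamma$ — is exactly the statement that $F_\gamma = \sum_i G^{(i)}_{\gamma + H_i}$ for $\gamma \in H_i^\bot$ and $F_\gamma$ is accounted for, i.e. $F = \sum_i G^{(i)}\!\uparrow^{\text{init}}_{H_i}$. The subtle point to get right is matching the extension $\Theta$ to genuine modular forms on the $\D_i$ and checking the holomorphy/boundedness at $\infty$ transfers — this is where one leans on Lemma \ref{lem:oldforms:tensorIsHom} to know $\uparrow^{\text{func}}$ lands in $\M_{(\D,k)}$, so the constructed $G^{(i)}$ automatically assemble to the given $F$.
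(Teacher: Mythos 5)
The easy direction ($F$ old $\Rightarrow$ kernel containment) is essentially the same as the paper's: you write $\F$ as a factorization through $\downarrow$, the paper does the same with the map $B$.

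For the harder direction there is a genuine gap. You want to apply Corollary \ref{lem:oldforms:complementarySubspace} to the descended map $\overline{\F}\colon \im(\downarrow) \to M_k(\Gamma(N))$ and extend it to a homomorphism of representations $\Theta$. But $\overline{\F}$ is \emph{not} a homomorphism of $\SL_2(\Z)/\Gamma(N)$-representations in the sense the corollary requires: the Weil representation $\eta$ is a left action on $\im(\downarrow)$, while slashing is a right action on $M_k(\Gamma(N))$. Since $F_\gamma|_M = \sum_\mu c_{\mu,\gamma} F_\mu$ where $\rho(M)\e_\gamma = \sum_\mu c_{\gamma,\mu}\e_\mu$, one gets $\F(x)|_M = \F(\rho(M)^T x)$, not $\F(\rho(M)x)$, and similarly for $\overline{\F}$ and $\eta$. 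The intertwining relation you would need, $\overline{\F}(\eta(M)y) = \overline{\F}(y)|_M$ for \emph{all} $M$, simply fails; Lemma \ref{lem:oldforms:getOldForms} establishes it only when $\rho(M)$ is a symmetric matrix. So Corollary \ref{lem:oldforms:complementarySubspace} is not applicable to $\overline{\F}$. Your hedges ("coordinate-by-coordinate in the target," "over a finite-dimensional subspace where Maschke applies") do not help: $M_k(\Gamma(N))$ is already finite-dimensional, so dimensionality is not the obstruction — the left/right action mismatch is.

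The paper's key maneuver, which your proposal is missing, is to apply Corollary \ref{lem:oldforms:complementarySubspace} not to $\overline{\F}$ but to the inclusion $\iota\colon Y \hookrightarrow Y$, $Y := \im(\downarrow)$. Since source and target both carry the same left action $\eta$, this is trivially equivariant, and one obtains an $\eta$-equivariant projection $\Theta\colon X = \bigoplus_i \C[D_i] \to Y$. One then sets $G^{(i)}_\b := \overline{\F}(\Theta([i,\b]))$ and verifies by hand that each $G^{(i)}$ is a vector valued modular form for $\D_i$ — crucially by checking only the generators $M = S,T$, for which $\rho(M)$ \emph{is} symmetric so that the defective intertwining relation of Lemma \ref{lem:oldforms:getOldForms} suffices. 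That computation, together with the explicit identity $\downarrow(\e_\gamma) = \sum_i \sum_{\b \ni \gamma}\e_\b$ used to verify $F = \sum_i G^{(i)}\!\uparrow^{\text{func}}_{H_i}$, is the content your proposal elides. (One could alternatively try to salvage your route by passing to the contragredient $\rho^\vee(M) = \overline{\rho(M)}$ and re-proving that $\downarrow$ is also $\rho^\vee$-equivariant, but that is a different and more delicate argument than what you wrote.)
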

\begin{proof}

\noindent 
For brevity we only write $\downarrow$ in place of $\downarrow_{H_1, ..., H_n}$.

\noindent"$\Rightarrow$": 
Let $\rho_1, ..., \rho_n$ be the Weil representations of $\SL_2(\Z)$ on $\C[D_1], ..., \C[D_n]$.
Then we let $\eta := \rho_1 \oplus ... \oplus \rho_n$. This is a representation of $\SL_2(\Z)$ on
$X := \C[D_1] \oplus ... \oplus \C[D_n]$. We identify $X$ with its 
isomorphic copy 
$$X \cong \C\left[ \bigsqcup_{i=1}^{n} D_i\right]$$
i.e. instead of writing elements as touples
$(\a_1, ..., \a_n)$ where $\a_i \in D_i$, we write them all as $\C$-linear combinations of elements
of the form $[i, \a]$ where $\a \in D_i$. We also put $Y := \im(\downarrow)$.

Fix $i \in \{1,...,n\}$ and $\gamma \in D$. Let $\pi_i : H_i^\bot \to D_i$ 
be the natural projection $\pi_i(\mu) = \mu + H_i$. Suppose $\gamma \in H_i^\bot$. 
Then 
$$
  \downarrow_{H_i}\!\!(\e_\gamma) 
	   = \,\downarrow_{H_i} \!\!\bigg(\sum_{\mu \in D} \mathbf{1}_{\gamma=\mu} \e_\mu \bigg) 
     = \sum_{\b \in D_i} \sum_{\mu \in \pi_i^{-1}(\b)} \mathbf{1}_{\mu=\gamma} \e_\b
		 = \e_{\gamma + H_i}
		 = \sum_{\substack{\{ \b \in D_i : \\ \gamma \in \b\}}} \e_{\b}
$$
because $\gamma$ is contained in precisely one class, namely $\gamma + H_i$.
If $\gamma \notin H_i^\bot$ then both sides of the equation give $0$, hence
$$  
  \downarrow_{H_i}\!\!(\e_\gamma) = \sum_{\substack{\{ \b \in D_i : \\ \gamma \in \b\}}} \e_{\b}
$$
holds for all $\gamma \in D$ and all $i=1,...,n$. Consequently,
\begin{equation}
\label{eq:downbasis}
  \downarrow\!\!(\e_\gamma) = \sum_{i=1}^{n} \sum_{\substack{\{ \b \in D_i : \\ \gamma \in \b\}}} \e_{\b}
  ~~~\forall \gamma \in D
\end{equation}

\noindent We use the Assumption in the following way: As $\C[D]/\ker(\downarrow) \hookrightarrow \C[D]/\ker(\FF)$,
we can push the map $\F$ forward to $\C[D]/\ker(\downarrow) \cong \im(\downarrow) = Y$ by setting
$$\FF(y) := \F(\text{arbitrary preimage of $y$ under $\downarrow$ in $\C[D]$})$$
In particular, for every $\gamma \in D$, we have that
$\e_\gamma$ is a preimage of $\downarrow\!\!(\e_\gamma)$, hence
\begin{equation}
\label{eq:oldforms:Fvgamma}
\FF(\downarrow\!\!(\e_\gamma)) = \F(\e_\gamma) = F_\gamma
\end{equation}

\noindent We will need the following lemma:

\begin{lem}
\label{lem:oldforms:getOldForms}
The space $Y=\im(\downarrow)$ is $\SL_2(\Z)$ invariant. 
If $M \in \SL_2(\Z)$ is such that $\rho(M) \in \C^{|D| \times |D|}$ is symmetric, then the
diagram

\[ 
\xymatrix{ Y\ar[rr]^{\FF}  \ar[d]_{\eta(M)}     & & M_k(\Gamma(N)) \ar[d]^{f \mapsto f|_M} \\
           Y\ar[rr]^{\FF}        &  & M_k(\Gamma(N)) } 
\]

\noindent commutes, i.e.
$$\FF(\eta(M)y) = \FF(y)|_M ~~ \text{for all $y \in Y$}$$
\end{lem}
\begin{proof}
It is clear that $Y$ is $\SL_2(\Z)$ invariant, because the map $\downarrow$ is a 
homomorphism of representations by Lemma \ref{lem:oldforms:vIsHom}\eqref{lem:oldforms:vIsHom:alghoms}, i.e.
if $y = \,\,\downarrow\!\!(x)$ then 
$$\eta(M)y = \eta(M)\!\downarrow\!(x) =\,\, \downarrow\!(\rho(M)x) \in \im(\downarrow)$$ 
for all $M \in \SL_2(\Z)$.
Now let $\rho(M) \e_\gamma = \sum_{\mu \in D} c_{\gamma, \mu} \e_\mu$ with 
$c_{\gamma, \mu} = c_{\mu, \gamma}$ by the assumption on the symmetry.
As $F$ is a vector valued modular form, $F_\gamma|_M = \sum_{\mu \in D} c_{\mu, \gamma} F_\mu 
= \sum_{\mu \in D} c_{\gamma, \mu} F_\mu$.
Since all the maps are $\C$-linear, it suffices to show the assertion for the generators $y = \,\,\downarrow\!\!(\e_\gamma)$
Now
\begin{align*}
  \FF(y)|_M &= \FF(\downarrow\!\!(\e_\gamma))|_M = F_\gamma|_M & \text{by \eqref{eq:oldforms:Fvgamma}} \\
    &= \sum_{\mu \in D} c_{\gamma, \mu} F_\mu \\
    &= \sum_{\mu \in D} c_{\gamma, \mu} \FF(\downarrow\!\!(\e_\mu)) & \text{by \eqref{eq:oldforms:Fvgamma}} \\
    &= \FF\left(\sum_{\mu \in D} c_{\gamma, \mu} \downarrow\!\!(\e_\mu) \right) \\
    &= \FF \downarrow\!\!\left( \sum_{\mu \in D} c_{\gamma, \mu} \e_\mu \right) \\
    &= \FF \downarrow\!\!( \rho(M) e_\gamma) \\
    &= \FF \eta(M) \downarrow\!\!(\e_\gamma) & \text{by Lemma \ref{lem:oldforms:vIsHom}} \\
    &= \FF (\eta(M)y) 
\end{align*}
\end{proof}

\begin{rmk}
  The discrepancy (i.e. the reason why we need to assume that $\rho(M)$ is symmetric) is that
  $\eta$ is a left action of $\SL_2(\Z)$ and slashing $f \mapsto f|_M$ is a right action.
\end{rmk}

\noindent We consider the inclusion map $\iota : Y \hookrightarrow Y$. Clearly, as $Y$ is $\SL_2(\Z)$ invariant, 
it makes sense to view $\eta$ as a representation of $\SL_2(\Z)$ on $Y$. Then, $\iota$ is clearly 
a homomorphism of representations. By Lemma \ref{lem:oldforms:complementarySubspace}, we can
continue $\iota$ to a homomorphism of representations
$$\Theta : X \to Y$$
This needs some clarification. Of course, $\SL_2(\Z)$ is not a finite group but
as Weil representations are trivial on $\Gamma(N)$, they can be viewed as representations
of the group $\SL_2(\Z)/\Gamma(N) \cong \SL_2(\Z_N)$ which is finite!
For every $i=1,...,n$ we define
\begin{equation}
\label{eq:defnG}
G_i := \sum_{\b \in D_i} G^{(i)}_\b \e_\b, ~~~ G^{(i)}_\b := \FF(\Theta([i, \b]))
\end{equation}
We claim that $G_i \in \vvmf{\D_i}$:
It suffices to check that 
$G_i$ slashes correctly under $S,T$. For these matrices,
$\rho(S), \rho(T)$ are symmetric and hence, Lemma \ref{lem:oldforms:getOldForms} 
is applicable. Let $M=S$ or $M=T$ (in fact, let $M$ be arbitrary such that $\rho(M)$ and $\rho_i(M)$ are symmetric).
Let $\rho_i(M)\e_\b = \sum_{\c \in D_i} c^{i, M}_{\b, \c} \e_\c$ with $c^{i,M}_{\b, \c} = c^{i,M}_{\c,\b}$. 
We need to see that
$G|_M = \rho(M) G$, i.e. that
$$\sum_\b G_\b|_M \e_\b = \sum_\b G_\b \rho (M) \e_\b = \sum_\c \sum_\b c^{i,M}_{\b,\c} \e_\c$$
and we get

\begin{align*}
  G_i|_M &= \sum_{\b \in D_i} G^{(i)}_\b|_M \e_\b 
          = \sum_{\b \in D_i} \FF(\Theta([i, \b]))|_M \e_\b \\
         &= \sum_{\b \in D_i} \FF(\eta(M)\Theta([i, \b])) \e_\b  & \text{by Lemma \ref{lem:oldforms:getOldForms}}\\
         &= \sum_{\b \in D_i} \FF(\Theta(\eta(M)[i, \b])) \e_\b  & \text{$\Theta$ is a hom. of reps}\\
         &= \sum_{\b \in D_i} \FF(\Theta(\rho_i(M)[i,\b])) \e_\b  & \text{def. of $\eta$}\\
         &= \sum_{\b \in D_i} \FF(\Theta( 
                 \sum_\c c^{i,M}_{\b,\c} [i,\c])) \e_\b \\
         &= \sum_{\b \in D_i} \sum_{\c \in D_i} c^{i,M}_{\b,\c}
                        \FF(\Theta([i,\c])) \e_\b \\
         &= \sum_{\b \in D_i} \sum_{\c \in D_i} c^{i,M}_{\b,\c}
                        G^{(i)}_\c \e_\b & \text{by \eqref{eq:defnG}} \\
         &= \sum_{\c \in D_i} G^{(i)}_\c \sum_{\b \in D_i} c^{i,M}_{\c,\b} \e_\b \\
         &= \sum_{\c \in D_i} G^{(i)}_\c \rho_i(M) \e_\c \\
         &= \rho_i(M) \sum_{\c \in D_i} G^{(i)}_\c \e_\c \\
         &= \rho_i(M) G_i
\end{align*}

\noindent Last but not least we claim that
$$F = \sum_{i=1}^n G_i\!\!\uparrow^{\text{func}}_{H_i} := G$$
We have
\begin{align*}
  G &= \sum_{i=1}^n \left(\sum_{\b \in D_i} G_{[i, \b]} \e_\b\right)\!\big\uparrow^{\text{func}}_{H_i} \\
    &= \sum_{i=1}^n \sum_{\b \in D_i} G_{[i, \b]} \uparrow_{H_i}\!\!(\e_\b) \\
    &= \sum_{i=1}^n \sum_{\b \in D_i} G_{[i, \b]} \sum_{\gamma \in \b} \e_\gamma \\
    &= \sum_{\gamma \in D} \sum_{i=1}^n \sum_{ \substack{\{\b \in D_i : \\ \gamma \in \b\}}} 
               ~\underbrace{G_{[i, \b]}}_{= \FF(\Theta([i, \b]))} \e_\gamma\\
    &= \sum_{\gamma \in D} \FF \circ \Theta 
		              \underbrace{ 
                    \left( \sum_{i=1}^n \sum_{ \substack{\{\b \in D_i :\\ \gamma \in \b\}}} \e_{[i, \b]} \right)
									}_{
									  = \downarrow(\e_\gamma) ~~ (\text{see \eqref{eq:downbasis}})
									} 
									\e_\gamma \\
    &= \sum_{\gamma \in D} \FF \circ \Theta\, \circ \downarrow (\e_\gamma) \e_\gamma \\
    &= \sum_{\gamma \in D} \FF \circ \iota \, \circ \downarrow (\e_\gamma) \e_\gamma & \text{as $\Theta$ is a continuation of $\iota$}\\
    &= \sum_{\gamma \in D} \FF \, \circ \downarrow(\e_\gamma) \e_\gamma\\
    &= \sum_{\gamma \in D} F_\gamma \e_\gamma & \text{by \eqref{eq:oldforms:Fvgamma}}\\
    &= F
\end{align*}

\noindent "$\Leftarrow$": Assume $F = A^{(1)}\!\uparrow_{H_1} + ... + A^{(n)}\!\uparrow_{H_n}$, then
\begin{equation}
  \label{eq:oldforms:FisOldForm}
  F_\gamma = \sum_{\substack{i \in \{1, ..., n\} \\ \gamma \in H_i^\bot}} A^{(i)}_{\gamma + H_i}
\end{equation}
We define a $\C$-linear map $B : X \to M_k(\Gamma(N))$ as
$$B([i, \b]) := A^{(i)}_\b, ~~\b \in D_i$$
and note that by \eqref{eq:oldforms:FisOldForm}, the diagram
\[ 
\xymatrix{ \C[D]\ar[rr]^{\F}  \ar[rd]_{v}     & & M_k(\Gamma(N)) \\
               & X \ar[ru]_{B} }
\]
commutes, i.e. $B(\downarrow\!\!(x)) = \F(x)$. In the language of "$\Rightarrow$", $B$ is $\FF$ 
and $\F$ factors through whole $X$, not only through $Y$. If $x \in \ker(\downarrow)$, then
$$0 = B(0) = B(\downarrow\!\!(x)) = \F(x)$$
and hence, $\ker(\downarrow) \subset \ker(\F)$.
\end{proof}

\section{An algorithm for splitting cusp forms into new and oldspace}
\label{sec:algorithm}
Let $\D = (D, Q)$ be a discriminant form. 
On $\C[D]$ there is a canonical 
scalar product, namely the sesquilinear continuation of
$$\ideal{\e_\gamma, \e_\delta} = \one_{\gamma = \delta}$$
i.e., the canonical basis $(\e_\gamma)_{\gamma \in D}$ forms an orthonormal basis.
Similarly, for isotropic subgroups $H_1, ..., H_n$ of $D$, on $X := \C[D_1] \oplus ... \oplus \C[D_n]$ 
-- where $D_i = H_i^\bot/H_i$ -- we can define a scalar product by putting the single ones together, 
i.e. if we identify $X$ with $\C[\sqcup_{i=1,...,n} D_i]$, and denote the canonical 
basis just by $[i, \a]$ (instead of $\e_{[i,\a]}$), then this basis forms an orthonormal basis.
We call these scalar products $\ideal{\cdot, \cdot}_{\C[D]}$, respectively $\ideal{\cdot, \cdot}_X$.
Similarly, we can put the Petterson products on $\vvcf{D_i}$ together
in order to obtain a scalar product on $\vvcf{X} \cong \vvcf{\D_1} \orthplus ... \orthplus \vvcf{\D_n}$.
We verify:
\begin{lem}
  \label{lem:up-down-adjoint}
  Let $\D=(D, Q)$ be a discriminant form of even signature and $H_1, ..., H_n$ isotropic subgroups of $D$.
	Put $D_i := H_i^\bot/H_i$ and $X=\C[\sqcup_{i=1,...,n} D_i]$ as above. Then
	\begin{enumerate}[(i)]
		\item
	    \label{lem:up-down-adjoint:algebraic}
		  $\bilform{\uparrow\!\!(\zeta), w}_{\C[D]} = \bilform{\zeta, \downarrow\!\!(w)}_X
			~,~\zeta \in X, w \in \C[D]$.
		\item
      \label{lem:up-down-adjoint:func}
		  $\bilform{\uparrow^\text{func}\!\!(G), F}_{\vvcf{\D}} = \bilform{G, \downarrow^\text{func}\!\!(F)}_{\vvcf{X}}
			~,~G \in \vvcf{X}, F \in \vvcf{\D}$.
	\end{enumerate}
  i.e. up arrow and down arrow are mutually adjoint.
\end{lem}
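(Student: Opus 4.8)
The plan is to establish \eqref{lem:up-down-adjoint:algebraic} by a direct computation on the canonical basis vectors, and then to deduce \eqref{lem:up-down-adjoint:func} from it by applying the algebraic statement fibrewise inside the Petersson integral.

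For \eqref{lem:up-down-adjoint:algebraic}: both pairings $\bilform{\cdot,\cdot}_{\C[D]}$ and $\bilform{\cdot,\cdot}_X$ are sesquilinear (of the same type) and both $\uparrow,\downarrow$ are $\C$-linear, so it suffices to verify the identity for $\zeta=\e_{[i,\a]}$ with $\a\in D_i$ (viewed simultaneously as a coset $\a\subseteq H_i^\bot\subseteq D$) and $w=\e_\delta$ with $\delta\in D$. Using $\uparrow=\uparrow_1+\cdots+\uparrow_n$ and Def. \ref{dfn:oldforms:algMaps}, we have $\uparrow(\e_{[i,\a]})=\uparrow_{H_i}(\e_\a)=\sum_{\gamma\in\a}\e_\gamma$, so orthonormality of $(\e_\gamma)_{\gamma\in D}$ gives $\bilform{\uparrow(\e_{[i,\a]}),\e_\delta}_{\C[D]}=\one_{\delta\in\a}$. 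On the other hand, by \eqref{eq:downbasis} (equivalently, the evaluation of $\downarrow$ on basis vectors carried out in the proof of Thm. \ref{thm:oldforms:detectionThm}) we have $\downarrow(\e_\delta)=\sum_{j=1}^n\sum_{\{\b\in D_j:\,\delta\in\b\}}\e_{[j,\b]}$, and orthonormality of $(\e_{[j,\b]})$ in $X$ gives $\bilform{\e_{[i,\a]},\downarrow(\e_\delta)}_X=\one_{\delta\in\a}$, the unique possibly nonzero term being $j=i,\ \b=\a$. The two sides agree.

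For \eqref{lem:up-down-adjoint:func}: the key observation is that, for cusp forms, the vector valued Petersson product is obtained by integrating the standard Hermitian pairing of $\C[D]$ fibrewise, $\bilform{F,G}_{\vvcf{\D}}=\int_\A\bilform{F(\tau),G(\tau)}_{\C[D]}\,y^k\,dx\,dy/y^2$, and likewise $\bilform{G,G'}_{\vvcf{X}}=\int_\A\bilform{G(\tau),G'(\tau)}_X\,y^k\,dx\,dy/y^2$ by the very definition of the scalar product on $\vvcf{X}\cong\vvcf{\D_1}\orthplus\cdots\orthplus\vvcf{\D_n}$ (all over one common fundamental domain $\A$ of $\SL_2(\Z)\backslash\H$; the integrals converge because $F,G$ are cusp forms, so finite sums and the integral may be freely interchanged). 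Moreover, unwinding $\uparrow^\text{func}_{H_i}=\uparrow_{H_i}\otimes\id$ and $\downarrow^\text{func}_{H_i}=\downarrow_{H_i}\otimes\id$ shows that evaluation at $\tau$ commutes with the up/down maps: $(\uparrow^\text{func}(G))(\tau)=\uparrow(G(\tau))$ in $\C[D]$ and $(\downarrow^\text{func}(F))(\tau)=\downarrow(F(\tau))$ in $X$. Hence
$$\bilform{\uparrow^\text{func}(G),F}_{\vvcf{\D}}=\int_\A\bilform{\uparrow(G(\tau)),F(\tau)}_{\C[D]}\,y^k\,\frac{dx\,dy}{y^2}=\int_\A\bilform{G(\tau),\downarrow(F(\tau))}_X\,y^k\,\frac{dx\,dy}{y^2}=\bilform{G,\downarrow^\text{func}(F)}_{\vvcf{X}},$$
where the middle equality is \eqref{lem:up-down-adjoint:algebraic} applied with $\zeta=G(\tau)$, $w=F(\tau)$ for each fixed $\tau$.

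The computation is entirely routine; the closest thing to an obstacle is bookkeeping. One must keep straight the identification of a class $\a\in D_i=H_i^\bot/H_i$ with the coset $\a\subseteq D$ it represents, the identification $X\cong\C[\bigsqcup_i D_i]$ with its canonical orthonormal basis $([i,\b])$, and the harmless but easy-to-garble point that the sesquilinearity conventions line up on the two sides of \eqref{lem:up-down-adjoint:algebraic} — which they do, since $\uparrow$ and $\downarrow$ have $\{0,1\}$-valued (in particular real) matrices in the canonical bases and occur in the appropriate first/second slots of the pairings.
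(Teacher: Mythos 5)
Your proof is correct and essentially coincides with the paper's: part (i) is verified on canonical basis vectors exactly as the paper does (the paper writes $\zeta=[i,\a]$, $w=\e_\gamma$ and computes both sides to $\one_{\gamma\in\a}$), and for part (ii) — where the paper only says the computation is ``analogous'' — your fibrewise argument, recognizing the vector-valued Petersson products as the integral of the algebraic pairings and using that evaluation at $\tau$ commutes with $\uparrow^{\text{func}},\downarrow^{\text{func}}$, is a clean and valid way to fill in that omitted detail.
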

\begin{proof}
  \eqref{lem:up-down-adjoint:algebraic}:
  Since everything is sesquilinear, we only need to verify this for the basis vectors
  $\zeta = [i,\a]$ and $w = \e_\gamma$.
  \begin{align*}
    \ideal{\uparrow\!\!(\zeta), w}_{\C[D]} 
      &= \ideal{\sum_{\mu \in \a} \e_\mu , \e_\gamma}_{\C[D]} \\
      &= \begin{cases}
           1 & \text{if $\gamma \in \a$} \\
           0 & \text{otherwise}
         \end{cases}
  \end{align*}
  \noindent and
  \begin{align*}
    \ideal{\zeta, \downarrow\!\!(w)}_X 
      &= \ideal{[i,\a] , \sum_{\gamma \in H_j^\bot} [j, \gamma + H_j]}_X \\
      &= \begin{cases}
           1 & \text{if there is a $j$ with $[i,\a] = [j, \gamma + H_j]$} \\
           0 & \text{otherwise}
         \end{cases} \\
      &= \begin{cases}
           1 & \text{if $\a = \gamma + H_j$} \\
           0 & \text{otherwise}
         \end{cases}
  \end{align*}
	
  \noindent \eqref{lem:up-down-adjoint:func}: This is a straightforward computation 
	analogously to the one in \eqref{lem:up-down-adjoint:algebraic}.
\end{proof}

\noindent Recall that, in order not to confuse the up/down maps on vector valued modular forms and their purely algebraic parts,
we give them different names: The up/down maps on vector valued modular forms are denoted by
$\uparrow^{\text{func}}, \downarrow^\text{func}$
and their algebraic parts are named $\uparrow, \downarrow$.
We summarize:
\begin{thm}
  \label{thm:oldforms-summarize}
  Let $\D = (D, Q)$ be a discriminant form, $H_1, ..., H_n$ isotropic subgroups.
  Let $F \in \vvcf{\D}$ and $\F$ its associated evaluation map as in Def. \ref{dfn:evaluation-map}.
  Then
  \begin{align*}
    F ~\text{is an oldform w.r.t. $H_1, ..., H_n$} &\iff F \in \im(\uparrow^\text{func})      \\
                   &\iff \ker(\downarrow) \subseteq \ker(\F) \\
    \text{ }\\
    F ~\text{is a newform w.r.t. $H_1, ..., H_n$}  &\iff F \in \im(\uparrow^\text{func})^\bot \\
                   & \iff \im(\uparrow) \subseteq \ker(\F) \\
       & \iff \forall i=1,...,n ~~ \forall \gamma \in H_i^\bot \sum_{h \in H_i} F_{\gamma + h} = 0
  \end{align*}
\end{thm}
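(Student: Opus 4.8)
The plan is to assemble this theorem from the pieces already in hand rather than proving anything genuinely new. First I would dispose of the oldform row. The equivalence $F$ is an oldform w.r.t.\ $H_1,\dots,H_n$ $\iff$ $F\in\im(\uparrow^\text{func})$ is just the definition of the oldspace $\vvof{\D}{H_1,\dots,H_n} = \sum_i \vvcf{\D_i}\!\uparrow^\text{func}_{H_i}$, combined with the fact (Lemma \ref{lem:oldforms:tensorIsHom}\eqref{lem:oldforms:tensorIsHom:funchoms}) that each $\uparrow^\text{func}_{H_i}$ lands in the space of vector valued modular forms, so the sum of images really is $\im(\uparrow^\text{func})$. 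The second equivalence, $F\in\im(\uparrow^\text{func}) \iff \ker(\downarrow)\subseteq\ker(\F)$, is precisely Thm \ref{thm:oldforms:detectionThm} (one only has to observe that for cusp forms $\vvcf{\cdot}$ the detection theorem applies verbatim, since it was stated for $\vvmf{\cdot}$ and cusp forms are a subspace).

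Next I would handle the newform row. By definition $\vvnf{\D}{H_1,\dots,H_n} = (\vvof{\D}{H_1,\dots,H_n})^\bot = \im(\uparrow^\text{func})^\bot$, which is the first newform equivalence. For the second, I would invoke the adjointness Lemma \ref{lem:up-down-adjoint}\eqref{lem:up-down-adjoint:func}: $F\perp \im(\uparrow^\text{func})$ means $\bilform{\uparrow^\text{func}(G),F}_{\vvcf{\D}}=0$ for all $G\in\vvcf{X}$, which by adjointness equals $\bilform{G,\downarrow^\text{func}(F)}_{\vvcf{X}}$, so $F$ is a newform iff $\downarrow^\text{func}(F)=0$ in $\vvcf{X}$. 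The remaining task is to translate $\downarrow^\text{func}(F)=0$ into the condition $\im(\uparrow)\subseteq\ker(\F)$. Unwinding, $\downarrow^\text{func}(F)=0$ says every component of $\downarrow^\text{func}_{H_i}(F)$ vanishes, i.e.\ $\sum_{\gamma\in\pi_i^{-1}(\a)}F_\gamma = 0$ for every $i$ and every $\a\in D_i$; choosing a representative $\gamma_0$ this is $\sum_{h\in H_i}F_{\gamma_0+h}=0$, which is exactly the explicit third condition. To connect to $\im(\uparrow)\subseteq\ker(\F)$: $\im(\uparrow)$ is spanned by the vectors $\uparrow_{H_i}(\e_\a)=\sum_{\gamma\in\a}\e_\gamma$ for $\a\in D_i$, and $\F$ applied to such a vector is $\sum_{\gamma\in\a}F_\gamma$; so $\im(\uparrow)\subseteq\ker(\F)$ holds iff all these sums vanish, which is the same family of equations. (Alternatively, $\im(\uparrow)\subseteq\ker(\F)$ is the literal dual statement to $\ker(\downarrow)\subseteq\ker(\F)$ under the adjointness, via $\ker(\F)^\perp$ considerations, but spelling it out on basis vectors is cleaner.)

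Putting it together, the whole proof is a short chain: definition $\Rightarrow$ Thm \ref{thm:oldforms:detectionThm} for the oldform row; definition of orthogonal complement $\Rightarrow$ Lemma \ref{lem:up-down-adjoint} $\Rightarrow$ basis-vector computation for the newform row. I do not expect any serious obstacle; the only thing requiring a little care is making sure the adjointness lemma is applied in the correct space (the Petersson product on $\vvcf{X}$ versus the one on $\vvcf{\D}$) and that one is allowed to replace "orthogonal to $\im(\uparrow^\text{func})$" by "killed by $\downarrow^\text{func}$" --- this uses that the Petersson product is a genuine (positive-definite) inner product on the finite-dimensional space $\vvcf{\D}$, so that $W^{\bot\bot}=W$ and $v\perp\im(T)\iff T^*v=0$. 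The bookkeeping between the three equivalent forms of the newform condition (orthogonality, $\im(\uparrow)\subseteq\ker(\F)$, and the explicit vanishing sums) is routine unwinding of the definitions of $\uparrow$, $\downarrow$, $\F$ and of $X\cong\C[\sqcup_i D_i]$.
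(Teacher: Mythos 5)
Your proposal is correct and follows essentially the same route as the paper: Thm \ref{thm:oldforms:detectionThm} for the oldform row, and Lemma \ref{lem:up-down-adjoint} plus direct unwinding of $\downarrow$/$\uparrow$ on basis vectors for the newform row. Your note that one must use positive-definiteness of the Petersson product (so $\vvcf{\D_i}^\bot=\{0\}$ and $W^{\bot\bot}=W$) is a point the paper uses silently, and your explicit basis-vector check that $\im(\uparrow)\subseteq\ker(\F)$ is equivalent to the vanishing sums is a small completion of a link the paper leaves implicit; otherwise the two arguments coincide.
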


\begin{proof}
  The first line was shown in Thm. \ref{thm:oldforms:detectionThm}.
  On the second line:
  By definition,
  $$\vvnf{\D}{H_1, ..., H_n} = \left(\vvcf{\D_1}\!\uparrow^{\text{func}}_{H_1} + ... + \vvcf{\D_n}\!\uparrow^{\text{func}}_{H_n}\right)^\bot$$
  Generally speaking, for every pair of suspaces $A,B$ of a vector space with bilinear form, 
  $(A+B)^\bot = A^\bot \cap B^\bot$ so
  \begin{equation}
    \label{eq:newforms-star}
    \vvnf{\D}{H_1, ..., H_n} = \vvnf{\D}{H_1} \cap ... \cap \vvnf{\D}{H_n}
  \end{equation}
  "$\Rightarrow$":
  \begin{align*}
    F ~\text{is new} & \Rightarrow F \in \vvnf{\D}{H_1, ..., H_n} 
                    \stackrel{\eqref{eq:newforms-star}}{\Rightarrow}
                                F \in \vvnf{\D}{H_i} ~~\forall i \\
                    & \Rightarrow \ideal{F\!\!\downarrow^{\text{func}}_{H_i}, g}  
										\stackrel{\text{Lemma} \ref{lem:up-down-adjoint}\eqref{lem:up-down-adjoint:func}}{=} 
										  \ideal{F, g\!\uparrow^{\text{func}}_{H_i}} = 0 ~~\forall i ~\forall g \in \vvcf{\D_i} \\
                    & \Rightarrow F\!\!\downarrow^{\text{func}}_{H_i} \in \vvcf{\D_i}^\bot = \{0\} ~~ \forall i\\
                    & \Rightarrow F \in \bigcap_{i=1}^n \ker(\downarrow_{H_i}^\text{func}) = \ker((\downarrow_{H_1}^\text{func}, ..., \downarrow_{H_n}^\text{func})) = \ker(\downarrow^\text{func})
  \end{align*}
  "$\Leftarrow$":
  \begin{align*}
    F \in \ker(\downarrow^\text{func}) &= \bigcap_{i=1}^n \ker(\downarrow_{H_i}^\text{func}) \\
                    &\Rightarrow \ideal{F, g\!\uparrow^{\text{func}}_{H_i}}  
										\stackrel{\text{Lemma} \ref{lem:up-down-adjoint}\eqref{lem:up-down-adjoint:func}}{=} 
										  \ideal{F\!\!\downarrow^{\text{func}}_{H_i}, g} = \ideal{0, g} = 0 \\
                    &\Rightarrow F \in \bigcap_{i=1}^n \left(\vvcf{\D_i}\!\uparrow^{\text{func}}_{H_i}\right)^\bot 
										  = \vvnf{\D}{H_1, ..., H_n}
  \end{align*}
  Now
  \begin{align*}
    F \in \ker(\downarrow_{H_i}) & \iff 0 = F\downarrow_{H_i} = \sum_{\b \in D_i} \left(\sum_{\gamma \in \b}  F_\gamma \right) [i, \b] \\
                & \iff \sum_{\gamma \in \b} F_\gamma = 0 ~~\forall \b \in H_i^\bot / H_i
  \end{align*}
\end{proof}

\noindent This gives an algorithm for concretely computing the decomposition
  $$ \vvcf{\D} = \vvofwo{\D} \orthplus \vvnfwo{\D}$$
using a computer algebra system.
First we compute the set of all isotropic subgroups we are interested in, say $H_1, ..., H_k$.
This is possible as $D$ is a finite set!
We compute a basis of $\vvmf{\D}$. We can use, for example, the algorithm by M. Raum \cite{raum}.
As a result we get the first parts of the Fourier expansions of a basis $F_1, ..., F_m$ 
of vector valued modular forms up to a certain number nowadays known as the sturm bound, i.e.
we know $a_{n, \gamma}(F_i)$ for all $i=1,...,m$ and $n=0,...,S$ where $S$ is a fixed natural.
After doing this, we set up the system for determining all
$\lambda_1, ..., \lambda_m \in \C$ with the property that $\sum_{i=1}^m \lambda_i F_i \in \vvnf{\D}{H_1, ..., H_k}$.
This is easy: once we have truncated to the sturm bound, this is a finite dimensional linear 
system of equations due to Thm \ref{thm:oldforms-summarize}, namely we have to compute those $\lambda_i$ with
$$\sum_{i=1}^{n} \sum_{\gamma \in \b} \lambda_i a_{n, \gamma}(F_i) = 0 ~~~ n=0,1,...,S$$
where we let $\b$ run through all the classes in each $H_l^\bot / H_l$ for $l=1,...,k$.

Analogously, we can compute all oldforms w.r.t. $H_1, ..., H_k$ by first computing the 
kernel of $\downarrow$ (finite dimensional linear system!) and then computing in the same way as above all
$\lambda_1, ..., \lambda_m$ with $\ker(\downarrow) \subset \ker(\F)$ where $F = \sum_i \lambda_i F_i$.
We can truncate this to all Fourier coefficients $n=0,1,...,S$ so this again becomes a finite dimensional linear system.

\section{Preparations}
\label{sec:preparation}
Having proved a neat criterion for detecting oldforms, in this section we do some preparations for the proof of the main theorem.
We want to show that all forms are oldforms. Indeed, it suffices to show that the algebraic part $\uparrow$ is surjective. The surjectivity of $\uparrow^{\text{func}}$ then follows:

\begin{lem}
  \label{lem:all-old}
  Let $\D = (D, Q)$ be a discriminant form. Assume that $\uparrow$ 
	(involving all isotropic subgroups) is surjective. 
  Then, so is $\uparrow^{\text{func}}$. In other words: every vector valued modular form for 
  $\D$ is an oldform.
\end{lem}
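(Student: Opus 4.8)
The plan is to reduce the surjectivity of $\uparrow^{\text{func}}$ to the already-assumed surjectivity of the algebraic map $\uparrow$, by exhibiting every vector valued modular form $F$ for $\D$ as lying in $\ker(\downarrow)^{\bot}$-free position, i.e.\ by verifying the criterion $\ker(\downarrow) \subseteq \ker(\F)$ from Thm.~\ref{thm:oldforms:detectionThm} and then invoking that theorem. Concretely, the key observation is that if $\uparrow : X \to \C[D]$ is surjective (where $X = \C[D_1] \oplus \dots \oplus \C[D_n]$ ranges over all isotropic subgroups), then by Lemma~\ref{lem:up-down-adjoint}\eqref{lem:up-down-adjoint:algebraic} its adjoint $\downarrow : \C[D] \to X$ is injective, so $\ker(\downarrow) = \{0\}$. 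But then the inclusion $\ker(\downarrow) = \{0\} \subseteq \ker(\F)$ holds trivially for \emph{every} $F \in \vvmf{\D}$, and Thm.~\ref{thm:oldforms:detectionThm} immediately tells us that every such $F$ is an oldform with respect to the full collection of isotropic subgroups, which is exactly the statement that $\uparrow^{\text{func}}$ is surjective onto $\vvmf{\D}$ (for cusp forms; the general holomorphic case follows the same way once one notes the detection theorem is stated for $\vvmf{\D}$).

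The steps, in order, are: (1) recall from Lemma~\ref{lem:up-down-adjoint}\eqref{lem:up-down-adjoint:algebraic} that $\uparrow$ and $\downarrow$ are mutually adjoint with respect to the canonical scalar products on $X$ and $\C[D]$; (2) deduce that surjectivity of $\uparrow$ is equivalent to injectivity of $\downarrow$, hence $\ker(\downarrow) = \{0\}$; (3) observe that the hypothesis $\ker(\downarrow) \subseteq \ker(\F)$ in Thm.~\ref{thm:oldforms:detectionThm} is then vacuously satisfied for any $F \in \vvmf{\D}$; (4) apply Thm.~\ref{thm:oldforms:detectionThm} to conclude $F$ is an oldform, i.e.\ $F \in \im(\uparrow^{\text{func}})$; (5) since $F$ was arbitrary, $\uparrow^{\text{func}}$ is surjective. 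One should be slightly careful at step (4) that Thm.~\ref{thm:oldforms:detectionThm} is phrased for arbitrary $F \in \vvmf{\D}$ and the collection $H_1, \dots, H_n$ is arbitrary, so taking it to be the full list of isotropic subgroups is legitimate; this is precisely the regime in which $\uparrow, \downarrow$ were defined without subscripts in Def.~\ref{dfn:oldforms:algMaps}.

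I do not expect a genuine obstacle here: the lemma is essentially a formal corollary of the detection theorem together with the adjointness lemma, and the only thing to watch is bookkeeping between the $\uparrow/\downarrow$ and $\uparrow^{\text{func}}/\downarrow^{\text{func}}$ notation — one must make sure that ``$\uparrow$ surjective'' in the hypothesis refers to the algebraic map and that the conclusion about $\uparrow^{\text{func}}$ is then read off via $\uparrow^{\text{func}} = \uparrow \otimes \id$ through the identification $\Phi_{\D}(\vvmf{\D}) = \M_{(\D,k)}$. If one prefers to avoid even invoking Thm.~\ref{thm:oldforms:detectionThm}, an entirely direct argument also works: given $F = \sum_\gamma F_\gamma \e_\gamma \in \vvmf{\D}$, pick for each $\gamma$ an element $\xi_\gamma \in X$ with $\uparrow\!(\xi_\gamma) = \e_\gamma$ (possible by surjectivity), assemble the candidate $G = \sum_\gamma F_\gamma \cdot \xi_\gamma \in \vvcf{X}$ componentwise, and check that $G\!\uparrow^{\text{func}} = F$ using $\uparrow^{\text{func}} = \uparrow \otimes \id$; the modularity of each component $G_i$ then follows from Lemma~\ref{lem:oldforms:tensorIsHom}. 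Either route is short.
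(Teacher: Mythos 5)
Your main argument is essentially identical to the paper's: adjointness from Lemma~\ref{lem:up-down-adjoint}\eqref{lem:up-down-adjoint:algebraic} gives $\ker(\downarrow) = \im(\uparrow)^\bot = \C[D]^\bot = \{0\}$, so the criterion $\ker(\downarrow) \subseteq \ker(\F)$ of Thm.~\ref{thm:oldforms:detectionThm} is vacuously satisfied for every $F$, and the conclusion follows. One caution, though, about the ``entirely direct'' alternative you sketch at the end: choosing arbitrary preimages $\xi_\gamma$ of $\e_\gamma$ under $\uparrow$ and setting $G = \sum_\gamma F_\gamma \otimes \xi_\gamma$ need not yield an element of $\M_{(X,k)}$, since the modularity condition $G|^\otimes_M = \Psi(M) G$ depends on which preimages are picked and fails for a generic choice whenever $\uparrow$ has nontrivial kernel; Lemma~\ref{lem:oldforms:tensorIsHom} only asserts that $\uparrow^{\text{func}}$ \emph{preserves} modularity, not that preimages inherit it. This is exactly the nontrivial content that the ``$\Rightarrow$'' direction of Thm.~\ref{thm:oldforms:detectionThm} handles by extending $\iota$ to an $\SL_2(\Z)$-equivariant $\Theta$ via Maschke's theorem, so the detour through the detection theorem is not really avoidable.
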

\begin{proof}
  By Lemma \ref{lem:up-down-adjoint}, $\uparrow$ and $\downarrow$ are mutually adjoint to each other.
  This implies $\ker(\downarrow) = \im(\uparrow)^\bot = \C[D]^\bot = \{0\}$.
  Hence, the condition in Theorem \ref{thm:oldforms:detectionThm} becomes trivial.
\end{proof}

\begin{dfn}
  Let $\D = (D, Q)$ be a discriminant form and $n \in \N$.
	Every sequence consisting of $n+1$ isotropic subgroups $H_0, ..., H_n$ such that
  \begin{enumerate}[(a)]
    \item $H_i \bot H_j$ for all $i \neq j$
      \label{dfn:nicely-orthogonal:all-orthogonal}
    \item $H_0 + (H_i \setminus \{0\}) \subseteq \bigcup_{k=1}^{m} H_k$ for all $i=1, ..., n$
      \label{dfn:nicely-orthogonal:closed}
    \item All the $H_i$ are cyclic and of the same size $n$, 
          i.e. $H_i = \langle \gamma_i \rangle$ for some $\gamma_i \in D$ 
          and $|H_i| = n$ for $i=0, ..., n$.
      \label{dfn:nicely-orthogonal:cyclicity}
    \item All the pairs $\gamma_i, \gamma_j$ for $i,j \in \{0, ..., n\}$ with $i \neq j$ 
          are 'weakly $\Z$-linearly independent'
          meaning that whenever there are $a, b \in \Z$ such that $a\gamma_i = b\gamma_j$ then
          $a\gamma_i=b\gamma_j=0$.
      \label{dfn:nicely-orthogonal:linear-independence}
  \end{enumerate}
  is called a sequence of $n+1$ nicely orthogonal isotropic subgroups.
  We say that this is a sequence of $n+1$ nicely orthogonal isotropic 
  subgroups for some $\gamma \in D$ iff.
  it is a sequence of $n+1$ nicely orthogonal isotropic subgroups and
  $\gamma \in H_i^\bot$ for all $i=0,1,...,n$.
\end{dfn}

\begin{lem}
  \label{lem:egammaimage}
  Let $\D = (D, Q)$ be a discriminant form and $\gamma \in D$.
  Let $H_0, ..., H_n$ be a sequence of $n+1$ nicely orthogonal 
  isotropic subgroups for $\gamma$, then
  $\e_\gamma \in \im(\uparrow)$. In fact,
  $\e_\gamma \in \im(\uparrow|_{\C[\sqcup_{i=0,...,n} D_i]})$ 
  where $D_i = H_i^\bot / H_i$.
\end{lem}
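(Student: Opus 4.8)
The plan is to write $\e_\gamma$ explicitly as a $\C$-linear combination of vectors $\uparrow_{H_j}(\e_\a)$ with $j\in\{0,\dots,n\}$ and $\a\in D_j:=H_j^\bot/H_j$; this proves both claims at once. For $i=1,\dots,n$ put $x_i:=\uparrow_{H_i}(\e_{\gamma+H_i})$, which makes sense because $\gamma\in H_i^\bot$, and which by Definition \ref{dfn:oldforms:algMaps} equals $\sum_{h\in H_i}\e_{\gamma+h}$. For $i\in\{1,\dots,n\}$ and $h\in H_i\setminus\{0\}$ the element $\gamma+h$ lies in $H_0^\bot$ (since $\gamma\in H_0^\bot$ and $h\in H_i\subseteq H_0^\bot$ by condition (a)), so $\uparrow_{H_0}(\e_{\gamma+h+H_0})=\sum_{h_0\in H_0}\e_{\gamma+h+h_0}$ is defined, and I set $T:=\sum_{i=1}^{n}\sum_{0\neq h\in H_i}\uparrow_{H_0}(\e_{\gamma+h+H_0})\in\im(\uparrow)$. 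The target is the identity
$$ n\,\e_\gamma \;=\; \sum_{i=1}^{n}x_i \;-\; \tfrac1n\,T . $$
Since $n=|H_i|\ge 1$ is invertible in $\C$ and every summand on the right lies in the image of $\uparrow$ restricted to $\C[\sqcup_{i=0,\dots,n}D_i]$, this identity immediately yields $\e_\gamma\in\im\big(\uparrow|_{\C[\sqcup_{i=0,\dots,n}D_i]}\big)\subseteq\im(\uparrow)$.

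First I would record that $H_i\cap H_j=\{0\}$ for $i\neq j$: this follows from the weak $\Z$-linear independence of the generators $\gamma_i,\gamma_j$ in condition (d), since $H_i=\langle\gamma_i\rangle$ by condition (c). Hence, writing $S':=\{\gamma+h:\ h\in(\bigcup_{i=1}^{n}H_i)\setminus\{0\}\}$, the nonzero-$h$ terms in $\sum_{i=1}^{n}x_i=n\,\e_\gamma+\sum_{i=1}^{n}\sum_{0\neq h\in H_i}\e_{\gamma+h}$ are pairwise distinct, so $\sum_{i=1}^{n}x_i=n\,\e_\gamma+\sum_{\mu\in S'}\e_\mu$. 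The crux is then to prove $T=n\sum_{\mu\in S'}\e_\mu$, after which the identity above is immediate. I expect this multiplicity count to be essentially the only obstacle. To establish it I would fix $h'\in D$ and count the triples $(i,h,h_0)$ with $i\in\{1,\dots,n\}$, $0\neq h\in H_i$, $h_0\in H_0$ and $h+h_0=h'$, since that number is the coefficient of $\e_{\gamma+h'}$ in $T$. Condition (b) gives $h+h_0\in\bigcup_k H_k$ whenever $h_0\neq 0$, so the coefficient vanishes unless $h'\in\bigcup_k H_k$; and a short case distinction, using the pairwise triviality of the intersections, shows it is $0$ when $h'=0$ or $h'\in H_0\setminus\{0\}$, and equals $n$ when $h'\in H_j\setminus\{0\}$ for some $j\ge1$ — the choice $h_0=0$ contributing $(j,h',0)$, and each of the $n-1$ values $h_0\in H_0\setminus\{0\}$ contributing the triple $(i,h'-h_0,h_0)$, where $h'-h_0=(-h_0)+h'\in\bigcup_k H_k$ by condition (b) and is a nonzero element of a unique $H_i$ with $i\ge 1$. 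This gives $T=n\sum_{\mu\in S'}\e_\mu$.

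Finally I would assemble everything: $\sum_{i=1}^{n}x_i-\tfrac1n T=\big(n\,\e_\gamma+\sum_{\mu\in S'}\e_\mu\big)-\sum_{\mu\in S'}\e_\mu=n\,\e_\gamma$, so $\e_\gamma=\tfrac1n\sum_{i=1}^{n}\uparrow_{H_i}(\e_{\gamma+H_i})-\tfrac1{n^2}\sum_{i=1}^{n}\sum_{0\neq h\in H_i}\uparrow_{H_0}(\e_{\gamma+h+H_0})$, which exhibits $\e_\gamma$ as an element of $\im\big(\uparrow|_{\C[\sqcup_{i=0,\dots,n}D_i]}\big)$ as required. The degenerate case $n=1$ (all $H_i=\{0\}$, $\uparrow_{H_i}=\id$) is covered too: then $S'=\emptyset$, $T=0$, and the identity reads $\e_\gamma=x_1$. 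The only hypotheses used are $\gamma\in H_i^\bot$ for all $i$, condition (a) (to place $\gamma+h$ in $H_0^\bot$), condition (c) (cyclicity and common cardinality $n$), condition (d) (pairwise triviality of the $H_i\cap H_j$), and condition (b) (the containment driving the count).
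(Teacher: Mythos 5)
Your proof is correct and is essentially the paper's argument in different packaging: the preimage you construct, $\tfrac1n\sum_i[i,\gamma+H_i]-\tfrac1{n^2}\sum_{i}\sum_{0\neq h\in H_i}[0,\gamma+h+H_0]$, is literally the paper's element $\zeta=-\tfrac1n\sum_j[0,\a_j]+\tfrac1n\sum_i[i,\gamma+H_i]$ after noting that each coset $\a_j$ of $H_0$ inside $M=\bigcup_{i\ge1}(\gamma+H_i\setminus\{\gamma\})$ is hit exactly $n$ times. Where the paper first proves $M$ is $H_0$-invariant and equals a disjoint union of $n-1$ cosets, you fold those two facts into the single triple-count establishing $T=n\sum_{\mu\in M}\e_\mu$; the same hypotheses (a)--(d) are used in the same places, so this is the same proof.
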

\begin{proof}
  Let $\D_i = H_i^\bot / H_i$.
  Put 
    $$M := \bigdotcup_{i=1, ..., n} \gamma + H_i \setminus \{ \gamma \}$$
  The union is indeed disjoint:
  Let $\mu \in \gamma + H_i \setminus \{\gamma\} \cap \gamma + H_j \setminus \{\gamma\}$ for $i \neq j$.
  Then there are $h_i \in H_i, h_j \in H_j$ such that
    $$\mu = \gamma + h_i = \gamma + h_j$$
  and $h_i \neq 0, h_j \neq 0$ as $\mu \neq \gamma$.
  Hence, $h_i = \mu - \gamma = h_j$. The $H_i$ are cyclic by assumption, so there are $a, b \in \Z$ with
  $h_i = a\gamma_i, h_j = b\gamma_j$. We obtain $a\gamma_i = h_i = h_j = b\gamma_j$ so
  $h_i = a\gamma_i = 0 = b\gamma_j = h_j$ by assumption 
  \eqref{dfn:nicely-orthogonal:linear-independence}, a contradiction.

  \noindent 
  We claim that there are precisely $n-1$ cosets $\a_1, ..., \a_{n-1}$ in $\D_0$ such that
  \begin{equation}
    \label{eq:Mset}
    M = \bigdotcup_{j=1, ..., n-1} \a_j
  \end{equation}
  In order to show this we first show that $M$ is $H_0$ invariant, i.e. for every $\mu$ in $M$,
  $\mu + h \in M$ for all $h \in H_0$:
  Let $\mu = \gamma + h_j$ with $h_j \in H_j$ for some $j\in \{1, ..., n\}$ and,
  as we only take $\gamma + H_j \setminus \{\gamma\}$, $h_j \neq 0$.
  Let $h \in H_0$ be arbitrary. By assumption \eqref{dfn:nicely-orthogonal:closed}, 
  $h + h_j = h_v \in H_v$ for some $v \in \{1, ..., n\}$. Hence,
  $$ \mu + h = \gamma + h + h_j = \gamma + h_v \in \bigcup_{i=1, ..., n} \gamma + H_i$$
  In order to see that $\mu + h \in M$ we therefore only need to show $\mu + h \neq \gamma$.
  Assume $\gamma + h_j + h = \mu + h = \gamma$ then $h + h_j = 0$, thus $h = -h_j$.
  As $h_j \neq 0$, also $h \neq 0$. By the cyclicity of the $H_i$, there are $a, b \in \Z$ such that
  $h = a\gamma_0$ and $h_j = b\gamma_j$. Consequently,
  $a\gamma_0 = h = -h_j = -b\gamma_j$. By Assumption \eqref{dfn:nicely-orthogonal:linear-independence}
  $a\gamma_0=-b\gamma_j=0$ i.e. $h_j = b\gamma_j = -(-b\gamma_j) = 0$ follows. Contradiction.
  In total: $\mu+h \neq \gamma$ and $\mu + h \in M$ and the $H_0$-invariance of $M$ is shown.
  Put
    $$S := \bigcup_{\mu \in M} \mu + H_0$$
  then clearly $M \subseteq S$ but we also have $S \subseteq M$ by the above:
  If $\mu \in M$ and $h \in H_0$ then also $\mu + h \in M$, hence,
  for every $\mu \in M$, $\mu + H_0 \subseteq M$ and therefore,
    $$S = \bigcup_{\mu \in M} \mu + H_0 = M$$
  Choose representatives $\lambda_1, ..., \lambda_A$ for the equivalence relation
  $$x \sim y \iff ~ \exists ~ h_0 \in H_0 ~~ x = y + h_0$$
  on $M$ then $M = \cup_{i=j, ..., A} \lambda_j + H_0$. We measure the size of both sides:
  Firstly, $|M| = n \cdot |\gamma + H_i \setminus \gamma| = n (n-1)$ (as $|H_i| = n$ for all $i$) and therefore
  $n(n-1) = |M| = |S| = A\cdot n$, so $A = n-1$. If we put $\a_j = \lambda_j + H_0$, we have shown
  \eqref{eq:Mset}.
  Now we construct a concrete preimage for $\e_\gamma$:
  We put 
   $$\zeta := -\frac{1}{n} \sum_{j=1, ..., n-1} [0, \a_j] + \frac{1}{n} \sum_{i=1, ..., n} [i, \gamma + H_i]$$
  As $\gamma \in H_i^\bot$ for all $i$, $\gamma + H_i$ is a class in $\D_i$,
  this is a well defined element of $\C[\sqcup_{i=0,...,n} D_i]$ which is a subset (and a subspace) 
  of $\C[\sqcup_{H} D_H]$ (the union runs over all isotropic subgroups of $D$). 
  We compute
  \begin{align*}
    \uparrow\!\!(\zeta) &= 
        -\frac{1}{n} \sum_{j=1, ..., n-1} \uparrow\!\![0, \a_j] + \frac{1}{n} \sum_{i=1, ..., n} \uparrow\!\![i, \gamma + H_i] \\
             &=
        -\frac{1}{n} \sum_{j=1, ..., n-1} \sum_{\mu \in \a_j} \e_\mu + 
         \frac{1}{n} \sum_{i=1, ..., n} \left( \sum_{\mu \in \gamma + H_i \setminus \{ \gamma\} } \e_\mu + \e_\gamma \right) \\
             &=
         \frac{1}{n} \left(
             -\sum_{\mu \in \cup_{j=1}^{n-1} \a_j} \e_\mu + 
              \sum_{\mu \in \cup_{i=1,...,n} \gamma + H_i \setminus \{\gamma\}} \e_\mu \right)
               + n\frac{1}{n} \e_\gamma \\
             &=
         \frac{1}{n} \left(
             -\sum_{\mu \in S} \e_\mu + 
              \sum_{\mu \in M} \e_\mu \right)
               + \e_\gamma \\
             &= 0+\e_\gamma = \e_\gamma & \text{by \eqref{eq:Mset}}
  \end{align*}
  Note that we have used the disjointness of the unions in the definitions of $S$ and $M$
  to transform the sums into 'union' symbols.
\end{proof}

We see that we need a mechanism that allows us to construct nicely 
orthogonal subgroups for all elements $\gamma \in D$.
The next Lemma provides us with such a method:

\begin{lem}
  \label{lem:existence-nicely-orth-gamma-general}
  Let $\D = (D, Q)$ be a discriminant form and $\gamma \in D$. 
  Let $\gamma^\bot = \{ \mu \in D : (\mu, \gamma) = 0 + \Z \}$.
  Assume there are two isotropic vectors $\delta, \mu$ in $\gamma^\bot$
  a prime $p$ (not necessarily odd!) and a natural $e \in \N$ such that
  \begin{enumerate}
    \item
    Whenever $a,b \in \Z$ are such that $a\delta + b\mu = 0$ then
    $a \equiv b \equiv 0 \mod p^e$.
    \label{lem:existence-nicely-orth-gamma-general:lin-ind}
    \item
    $\delta \bot \mu$.
    \label{lem:existence-nicely-orth-gamma-general:orth}
  \end{enumerate}
  Then there exists a sequence of $p+1$ nicely orthogonal isotropic subgroups for $\gamma$.
  Consequently, $\e_\gamma \in \im(\uparrow)$.
\end{lem}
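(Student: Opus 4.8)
The plan is to produce the required $p+1$ subgroups inside the subgroup $A:=\langle\delta,\mu\rangle\le D$ and then invoke Lemma~\ref{lem:egammaimage} (whose ``$n$'' will be $p$). First I would record that $A$ is totally isotropic and sits inside $\gamma^\bot$: since $\delta,\mu$ are isotropic we have $(\delta,\delta)=2Q(\delta)=0$, $(\mu,\mu)=2Q(\mu)=0$, and by hypothesis~\eqref{lem:existence-nicely-orth-gamma-general:orth} also $(\delta,\mu)=0$, so the bilinear form vanishes identically on $A$ and $Q(a\delta+b\mu)=a^2Q(\delta)+b^2Q(\mu)+ab(\delta,\mu)=0$ for all $a,b\in\Z$; moreover $A\subseteq\gamma^\bot$ because $\delta,\mu\in\gamma^\bot$ and $\gamma^\bot$ is a subgroup. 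Hence for \emph{any} family of subgroups taken inside $A$, condition~\eqref{dfn:nicely-orthogonal:all-orthogonal}, the isotropy requirement, and the ``for $\gamma$'' requirement are automatic, and it remains only to choose the family so that it is cyclic of the right size, has weakly $\Z$-linearly independent generators, and is closed in the sense of~\eqref{dfn:nicely-orthogonal:closed}.

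Next I would show that the $p$-primary part $A_p$ of $A$ is \emph{not} cyclic. Writing $\delta=\delta_p+\delta_{p'}$ and $\mu=\mu_p+\mu_{p'}$ for the decompositions into $p$-part and prime-to-$p$-part, we have $A_p=\langle\delta_p,\mu_p\rangle$, and $\delta_p,\mu_p\ne 0$ since hypothesis~\eqref{lem:existence-nicely-orth-gamma-general:lin-ind} applied with second coefficient $0$ forces $p^e\mid\ord(\delta)$ and $p^e\mid\ord(\mu)$. If $A_p=\langle g\rangle$ were cyclic of order $p^c$, write $\delta_p=p^k u\,g$ and $\mu_p=p^l w\,g$ with $u,w$ prime to $p$ and (after relabelling) $k\le l$; then $(wp^{\,l-k})\delta_p-u\,\mu_p=0$, and multiplying by an integer $M$ prime to $p$ that annihilates $\delta_{p'}$ and $\mu_{p'}$ produces a relation $(Mwp^{\,l-k})\delta+(-Mu)\mu=0$ in $D$ whose $\mu$-coefficient $-Mu$ is prime to $p$, contradicting hypothesis~\eqref{lem:existence-nicely-orth-gamma-general:lin-ind} because $e\ge 1$. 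Therefore $A_p\cong\Z/p^{s}\Z\times\Z/p^{t}\Z$ with $s\ge t\ge 1$, the subgroup $A_p[p]:=\{x\in A_p: px=0\}$ is a $2$-dimensional $\mathbb{F}_p$-space, and the order-$p$ subgroups of $A$ are exactly the $p+1$ lines of $A_p[p]$.

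Finally I would assemble the family: fix a basis $\bar u,\bar w$ of $A_p[p]$, set $H_0:=\langle\bar u\rangle$, and let $H_1,\dots,H_p$ be the other $p$ lines $\langle\bar w+j\bar u\rangle$ for $j=0,\dots,p-1$. By the first step these are cyclic, isotropic, pairwise orthogonal subgroups of $\gamma^\bot$, all of the common order $p=n$, giving~\eqref{dfn:nicely-orthogonal:cyclicity}; two distinct order-$p$ subgroups automatically have weakly $\Z$-linearly independent generators (if $a\gamma_i=b\gamma_j\ne 0$ then $p\nmid a$, so $\gamma_i\in\langle\gamma_j\rangle$ and $H_i=H_j$), giving~\eqref{dfn:nicely-orthogonal:linear-independence}; and~\eqref{dfn:nicely-orthogonal:closed} is the elementary observation that in a $2$-dimensional $\mathbb{F}_p$-space, for distinct lines $\ell\ne\ell'$ and $x\in\ell$, $y\in\ell'\setminus\{0\}$ one has $0\ne x+y\notin\ell$, so $H_0+(H_i\setminus\{0\})$ meets only lines other than $H_0$, i.e. lies in $H_1\cup\dots\cup H_p$ (there being just $p+1$ lines in all). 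Then $H_0,\dots,H_p$ is a sequence of $p+1$ nicely orthogonal isotropic subgroups for $\gamma$, and Lemma~\ref{lem:egammaimage} yields $\e_\gamma\in\im(\uparrow)$. I expect the middle step to be the only real obstacle: the first and last steps are bookkeeping in a plane over $\mathbb{F}_p$, whereas the non-cyclicity of $A_p$ is exactly the place where the ``depth $p^e$'' of the weak-independence hypothesis, together with $e\ge 1$, is genuinely used -- with ordinary weak independence $A_p$ could be cyclic and no such family would exist.
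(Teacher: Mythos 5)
Your proof is correct, and it realizes the same basic idea as the paper's (produce $p+1$ order-$p$ cyclic subgroups sitting together in a ``plane'' inside $\gamma^\bot$) by a noticeably more structural route. The paper writes down explicit generators, namely $h_{-1}=p^{e-1}\mu$ and $h_j=p^{e-1}(\delta+j\mu)$ for $j=0,\dots,p-1$, and then checks conditions \eqref{dfn:nicely-orthogonal:all-orthogonal}--\eqref{dfn:nicely-orthogonal:linear-independence} by direct computation with these formulas, using hypothesis~\eqref{lem:existence-nicely-orth-gamma-general:lin-ind} to show the generators are not killed by anything smaller than $p$. You instead pass to $A_p[p]$, the $p$-torsion of the $p$-primary part of $A=\langle\delta,\mu\rangle$, prove non-cyclicity of $A_p$ from the hypothesis, and then take all $p+1$ lines of the resulting $\mathbb{F}_p$-plane; conditions~\eqref{dfn:nicely-orthogonal:closed}--\eqref{dfn:nicely-orthogonal:linear-independence} become elementary facts about lines in $\mathbb{F}_p^2$. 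When $p^e\delta=p^e\mu=0$, the two constructions produce the same family (the paper's $h_j$ and $h_{-1}$ are then a basis-dependent parametrization of the lines in your $A_p[p]$).

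One thing your approach buys is robustness that the paper's explicit formulas do not obviously have. The paper's argument shows that $vh_j=0$ forces $p\mid v$, hence that $\ord(h_j)$ is a \emph{multiple} of $p$; but for $|H_j|=p$ one also needs $ph_j=p^e(\delta+j\mu)=0$, i.e.\ $p^e\delta=p^e\mu=0$, which is not literally contained in hypothesis~\eqref{lem:existence-nicely-orth-gamma-general:lin-ind}. (It does hold in every place the lemma is invoked in the paper, since the vectors produced by Lemmas~\ref{lem:finish:p=odd} and~\ref{lem:finish:p=2} are manufactured with order dividing $p^e$, so the paper's conclusions are unaffected.) Your construction needs only that $A_p$ is non-cyclic, which you derive correctly from the hypothesis and $e\ge 1$, so your proof establishes the lemma exactly as stated, without the unstated extra assumption. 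The one point worth making explicit when you pass to $p$-parts is that $\delta_p,\mu_p\in A$ (they are integer multiples of $\delta,\mu$ by CRT), so the lines you pick really do live in the isotropic, pairwise orthogonal, $\gamma$-orthogonal subgroup $A$ -- you gesture at this in your first paragraph, and it is fine, but it is the hinge on which conditions \eqref{dfn:nicely-orthogonal:all-orthogonal} and the ``for $\gamma$'' clause rest.
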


\begin{proof}
  When $x,y \in \Z$ or $x,y \in \Z_n$, we write $[x,y]$ in place for $x \delta + y\mu$.
  Let $q := p^e$.
  We define
  $$h_{-1} := p^{e-1} [0,1] , ~~~ h_j := p^{e-1} [1,j] ~\text{for $j=0,1,...,p-1$}$$
  and
  $$H_j := \langle h_j \rangle \text{for $j=-1,0,1,...,p-1$}$$

  \noindent These are subgroups of order $p$:
  for if, say for $j\geq 0$, $v \in \Z$ with $v h_j = 0$ then
  $vp^{e-1}\delta + vp^{e-1}j\mu = 0$. By assumption \eqref{lem:existence-nicely-orth-gamma-general:lin-ind}, 
  $vp^{e-1} \equiv 0 \mod p^e$ but this holds iff. 
  $v \equiv 0 \mod p$. Analogously we proceed with $h_{-1}$.
  Hence, $H_j = \{ 0, h_j, 2h_j, ..., (p-1)h_j\}$.
  We verify the properties of nicely orthogonal subgroups:

  \noindent \eqref{dfn:nicely-orthogonal:all-orthogonal}:
  First let $i,j \geq 0$ then
  $$(h_i, h_j) = (\delta + i\mu, \delta + j\mu) = (\delta, \delta) + ij(\mu, \mu) = 0 + 0 = 0$$
  as $\delta \bot \mu$ and $\delta, \mu$ are isotropic.
  Analogously we verify this for $(h_{-1}, h_j)$
  
	\noindent \eqref{dfn:nicely-orthogonal:closed}:
  Let $x_{-1} \in H_{-1}$ and $x_j \in H_j \setminus \{0\}$ for some $j$.
  By definition, the $H_i$ are cyclic, so there are $\alpha, \beta \in \{0,1,...,p-1\}$ such that
  $x_{-1} = \alpha h_{-1} = [0, \alpha]$ and $x_j = \beta h_j = [\beta, \beta j]$.
  As $x_j \neq 0, \beta \neq 0$ so we can invert $\beta$ in $\Z_p$ and get
  Now 
    $$x_{-1} + x_j = \beta [1, \frac{\alpha + j\beta}{\beta}]$$
  so this is an element in $H_k$ where $k \equiv \beta^{-1} (\alpha + j\beta) \mod p$.
	
  \noindent \eqref{dfn:nicely-orthogonal:cyclicity}: See above.
	
  \noindent \eqref{dfn:nicely-orthogonal:linear-independence}:
  Let $a,b \in \Z$ be such that $ah_i + bh_j = 0$. First assume $i,j \geq 0$. Then
    $$0 = ah_i + bh_j = [a, ai] + [b, bj] = [a+b, ai + bj]$$
  By assumption \eqref{lem:existence-nicely-orth-gamma-general:lin-ind}, 
  it follows that $a+b \equiv ai + bj \equiv 0 \mod p$. Rephrased in matrix 
  language this means
    $$\mat{1 & 1 \\ i & j} \mat{a \\ b} \equiv \mat{0 \\ 0} \mod p$$
  As $\tmat{1 & 1 \\ i & j}$ is invertible over $\Z_p$ (because $i \neq j$), 
  this means $a \equiv b \equiv 0 \mod p$.
  Now assume $i=-1$ and $j \geq 0$ then
    $$0 = ah_{-1} + bh_j = [0, a] + [b, bj] = [b, a + bj]$$
  By assumption \eqref{lem:existence-nicely-orth-gamma-general:lin-ind}, 
  this implies $b \equiv 0 \mod p$ and hence, $0 = [0, a]$ so again, 
  by assumption \eqref{lem:existence-nicely-orth-gamma-general:lin-ind}, 
  $a \equiv 0 \mod p$.
  
  \noindent Hence, $H_{-1}, H_0, H_1, ..., H_{p-1}$ is a sequence of $p+1$ 
  nicely orthogonal isotropic subgroups. It is a sequence for $\gamma$
  because the $h_i$ are in the span of $\delta, \mu$ and they lie -- by assumption -- 
  in $\gamma^\bot$, so $h_j \bot \gamma$ for all $j=-1,0,1,...,p-1$ or, 
  as the $H_j$ are cyclic, $\gamma \in H_j^\bot$.

\end{proof}

\begin{dfn}
  Let $R$ be a commutative ring and $M$ a freely, finitely generated $R$ module of rank $r$.
  Choose a basis $m_1, ..., m_r$. An element
  $m = \sum_{i=1}^r a_i m_i$ is called primitive w.r.t. this basis iff. there exists an $i$ such that $a_i \in R^*$.
\end{dfn}

\noindent Let  $R=\Z_{p^e}$ for some prime $p$ and $e \in \N$. Elements of $R$ are equivalence classes $[a]$ 
but the assertion "$p|a$" does not depend on the chosen representative. 
For these rings we get
$$\gamma ~\text{primitive} \iff \exists i ~~ p \ndivides a_i$$
in particular, if $m \neq 0$ is not primitive (w.r.t. to some fixed basis) then
$p|a_i$ for all $i$ and hence we can pull out all $p$-powers and end up 
at $m = p^r m'$ for some primitive $m' \in M$.
For $R=\Zp$, the $p$-adic integers, we obtain precisely the same results.

\begin{rmk}
  Let $R$ be an integral domain and $M$ a freely, finitely generated $R$-module.
	Let $(\cdot, \cdot)$ be a bilinear form on $M$.
  For a basis  $m=\{m_1, ..., m_r\}$ of $M$ we consider the Gram matrix
    $$G_m = ((m_i, m_j))_{i,j=1,...,n}$$
  then 
  \begin{align*}
    (\cdot, \cdot) ~\text{non-degenerate} &\iff \det(G_m) \neq 0 ~\text{for all bases $m$}\\
     &\iff \det(G_{m_0}) \neq 0 ~\text{for one fixed basis $m_0$}
  \end{align*}
  and
  \begin{align*}
    (\cdot, \cdot) ~\text{non-degenerate} &\iff \det(G_m) \in \R^\times (\iff G_m \in \GL_r(R)) ~\text{for all bases $m$}\\
     &\iff \det(G_{m_0}) \in R^\times ~\text{for one fixed basis $m_0$}
  \end{align*}
\end{rmk}

\begin{notation}
\label{not:padicStuff}
We set up some terminology which we will use from now on.
Let $p$ be a fixed prime (not necessarily odd) and $e \in \N$.
For making the interaction between $\Z, \Z_{p^e}$ and $\Zp$ rigorous we need
to name each of the several maps that are floating around in between them.
Firstly, there is 
  $$\overline{\cdot} : \Z \to \Z_{p^e}, ~~ x \mapsto \overline{x} := x + p^e\Z$$
secondly, there is the imbedding
  $$\iota : \Z \to \Zp, ~~~ x \mapsto \iota(x)$$
but in a clear abuse of notation, for the sake of readability 
we will drop $\iota$ as often as possible.
Occasionally, we will non the less remark that this map is involved.
We recall one more map:
Every element $\alpha \in \Zp$ can be written uniquely as an infinite power series
$\alpha = \alpha_0 + \alpha_1p + \alpha_2 p^2 + ...$ with $\alpha_i \in \{0, 1, ..., p-1\}$.
We define another map
$$R_{p^e}^\Z : \Zp \to \Z, ~~ R_{p^e}^\Z(\alpha) := \alpha_0 + \alpha_1p + ... + \alpha_{p^{e-1}} p^{e-1}$$
We also define
$$R_{p^e} : \Zp \to \Z_{p^e}, ~~ R_{p^e} := \overline{\cdot} \circ R_{p^e}^\Z$$
to be the so-called reduction of $\alpha$ modulo $p^e$.
The maps $\overline{\cdot}, \iota$ and $R_{p^e}$ are ring homomorphisms 
(careful: $R_{p^e}^\Z$ is not!). These maps
are also defined on vectors or matrices over their respective domains 
by applying them component wise.
It is important to note that
\begin{equation}
\label{eq:RpeCircTildeIsOverline}
  R_{p^e} \circ \iota = \overline{\cdot}
\end{equation}
on elements in, and vectors and matrices over $\Z$.
We also let $\nu_p$ denote the $p$-adic valuation 
on the $p$-adic integers $\Zp$ throughout: every $\alpha \in \Zp$ can be written
uniquely as $\alpha = \epsilon p^r$ for some $r \in \N \cup \{0\}$ 
and $\epsilon \in \Zp^\times$. Then, $\nu_p(\alpha) := r$.
\end{notation}

\begin{lem}
  \label{lem:primitive-high-norm-splits}
  Let $M$ be a freely, finitely generated $\Zp$ module or rank $r \geq 2$ for a 
  (not necessarily odd!) prime $p$. Suppose
  $\pform{\cdot, \cdot}$ is a symmetric \textbf{unimodular} bilinear form on $M$.
  Let $\tilde{\gamma} \in M$ such that $\nu_p(\pform{\tilde{\gamma}, \tilde{\gamma}}) > 0$
  and $\tilde{\gamma}$ is primitive.
  Then there exists an element $\tilde{\delta} \in M$ such that
  \begin{enumerate}[(a)]
   \item 
   \label{lem:primitive-high-norm-splits:independent}
   $\tilde{\gamma}$ and $\tilde{\delta}$ are $\Zp$--linearly independent
   \item 
   \label{lem:primitive-high-norm-splits:split}
   The submodule $U := \Zp \tilde{\gamma} \oplus \Zp \tilde{\delta}$ 
   can be split off orthogonally, i.e.
   $M = U \orthplus U^\bot$ and $U^\bot$ is freely, 
   finitely generated of rank $r-2$.
  \end{enumerate}
\end{lem}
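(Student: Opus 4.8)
The plan is to produce $\tilde\delta$ essentially as a "hyperbolic partner" of $\tilde\gamma$ and then split off the resulting rank-$2$ block using unimodularity. First I would use that $\pform{\cdot,\cdot}$ is unimodular and $\tilde\gamma$ is primitive: the functional $\pform{\tilde\gamma,\cdot}\colon M\to\Zp$ is then surjective (a primitive vector in a unimodular lattice is part of a basis, or equivalently its associated functional hits a unit), so there exists $\tilde\eta\in M$ with $\pform{\tilde\gamma,\tilde\eta}\in\Zp^\times$. Rescaling, I may assume $\pform{\tilde\gamma,\tilde\eta}=1$. Since $\pform{\tilde\gamma,\tilde\gamma}\in p\Zp$, the vectors $\tilde\gamma,\tilde\eta$ are automatically $\Zp$-linearly independent (if $a\tilde\gamma=b\tilde\eta$ with, say, $b$ a unit, pair with $\tilde\gamma$ to get $a\pform{\tilde\gamma,\tilde\gamma}=b$, forcing $b\in p\Zp$, contradiction; the case $a$ a unit is symmetric), and the Gram matrix of the pair $(\tilde\gamma,\tilde\eta)$ is $\left(\begin{smallmatrix} \pform{\tilde\gamma,\tilde\gamma} & 1 \\ 1 & \pform{\tilde\eta,\tilde\eta}\end{smallmatrix}\right)$, whose determinant is $\pform{\tilde\gamma,\tilde\gamma}\pform{\tilde\eta,\tilde\eta}-1\in\Zp^\times$ because $\pform{\tilde\gamma,\tilde\gamma}\pform{\tilde\eta,\tilde\eta}\in p\Zp$. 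So $U:=\Zp\tilde\gamma\oplus\Zp\tilde\eta$ already carries a unimodular form.

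Next I would invoke the standard fact that a unimodular sublattice splits off orthogonally: if $U\subseteq M$ and $\pform{\cdot,\cdot}|_{U\times U}$ is unimodular, then $M=U\orthplus U^\bot$. The reason is that the projection $\mathrm{pr}_U\colon M\to U$ defined by $x\mapsto$ (the unique $u\in U$ with $\pform{u,u'}=\pform{x,u'}$ for all $u'\in U$) is well defined because the Gram matrix of a basis of $U$ is invertible over $\Zp$; then $x-\mathrm{pr}_U(x)\in U^\bot$, giving $M=U+U^\bot$, and $U\cap U^\bot=0$ since the form on $U$ is nondegenerate. Finally $U^\bot$ is a direct summand of the free module $M$, hence itself free of rank $r-2$, and it is unimodular as the orthogonal complement of a unimodular summand in a unimodular lattice — though for this lemma I only need that it is free of rank $r-2$. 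Setting $\tilde\delta:=\tilde\eta$ then delivers both (a) and (b).

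The one place requiring care — and what I'd call the main obstacle — is the prime $p=2$: several "standard" statements about lattices (e.g. the existence of orthogonal bases, or diagonalizing a symmetric form) fail at the dyadic prime, which is exactly why Theorem~\ref{thm:jordan} has the special even $2$-adic blocks. However, the two facts I actually use, namely "a primitive vector in a unimodular lattice has unit-valued pairing against some vector" and "a unimodular sublattice is an orthogonal direct summand," hold over $\Zp$ for every prime including $p=2$, since they only use invertibility of a single Gram determinant and never require diagonalization. The condition $\nu_p(\pform{\tilde\gamma,\tilde\gamma})>0$ is what rescues the determinant computation $\pform{\tilde\gamma,\tilde\gamma}\pform{\tilde\eta,\tilde\eta}-1\in\Zp^\times$ uniformly in $p$; without it one could have $\pform{\tilde\gamma,\tilde\gamma}\pform{\tilde\eta,\tilde\eta}=1$ and the naive $2\times2$ block would be degenerate, so that hypothesis is doing real work and I would flag its use explicitly.
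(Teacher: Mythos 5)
Your proof is correct and follows essentially the same route as the paper's: produce a partner $\tilde\eta$ pairing to a unit with $\tilde\gamma$ (the paper does this concretely by solving $\tilde G\tilde\delta = e_i$, you invoke the duality isomorphism $M\cong\operatorname{Hom}(M,\Zp)$; these are the same mechanism), observe that $\nu_p(\pform{\tilde\gamma,\tilde\gamma})>0$ forces the $2\times2$ Gram determinant into $\Zp^\times$, split this unimodular block off orthogonally, and conclude $U^\bot$ is free of rank $r-2$. One minor imprecision worth noting: in your linear-independence check the case ``$a$ a unit'' is not literally symmetric to ``$b$ a unit'' (pairing with $\tilde\gamma$ first shows $b\in p\Zp$, and then one must pair with $\tilde\eta$ to force $a\in p\Zp$), but this is moot since invertibility of the $2\times2$ Gram matrix already yields linear independence, which is in effect the argument the paper records.
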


\begin{proof}
Let $\tilde{G} \in \GL_n(\Zp)$ denote the (invertible) Gram matrix of $\pform{\cdot, \cdot}$ 
with respect to any fixed basis of $M$. We view vectors as column vectors and their entries are the coordinates $\Zp$ w.r.t. this basis.
$\tilde{\gamma}$ is primitive, consequently there exists a coordinate $\tilde{\gamma}_i \in \Zp^\times$.
As $\tilde{G}$ is invertible over $\Zp$, there is a vector $\tilde{\delta} \in M$ such that 
$\tilde{G} \tilde{\delta} = e_i$.
($e_i$ is the column vector having $0$ at every position except at $i$ and $1$ at $i$).
Hence, $\pform{\tilde{\gamma}, \tilde{\delta}} = \tilde{\gamma}^T \cdot \tilde{G} \cdot \tilde{\delta} = \tilde{\gamma}^T \cdot e_i = \tilde{\gamma}_i \in \Zp^\times$
(here, $T$ means 'transpose').
If we rescale $\tilde{\delta}$ by $\tilde{\gamma}_i^{-1}$ then we get $\pform{\tilde{\gamma}, \tilde{\delta}} = 1$.
This already suffices to see that $\tilde{\gamma}, \tilde{\delta}$ are $\Zp$-linearly independent:

Let $\pform{\tilde{\gamma}, \tilde{\gamma}} = p^w a$ and 
$\pform{\tilde{\delta}, \tilde{\delta}} = p^s b$ with $a,b \in \Zp^\times$.
Suppose $x,y \in \Zp$ have the property that $x\tilde{\gamma} + y\tilde{\delta} = 0$.
Pairing this expression with $\tilde{\gamma}$ yields
$$0 = \pform{0, \tilde{\gamma}} 
    = \pform{x\tilde{\gamma} + y\tilde{\delta}, \tilde{\gamma}}
    = x \pform{\tilde{\gamma}, \tilde{\gamma}} + y\pform{\tilde{\gamma}, \tilde{\delta}}
    = x p^w a + y$$
so $y = -xa p^w$.
Pairing the expression with $\tilde{\delta}$ yields
$$0 = \pform{0, \tilde{\delta}} 
    = \pform{x\tilde{\gamma} + y\tilde{\delta}, \tilde{\delta}}
    = x \pform{\tilde{\gamma}, \tilde{\delta}} + y\pform{\tilde{\delta}, \tilde{\delta}}
    = x + p^s by$$
In matrix notation, this means
$$\mat{p^wa & 1 \\ 1 & p^s b} \mat{x\\y} = \mat{0\\0}$$
but this matrix is invertible over $\Zp$ as its determinant is 
$p^{w+s} ab - 1 \in \Zp^\times + p\Zp \subset \Zp^\times$ 
(we use $w > 0$, i.e. $p^{w+s}ab \in p\Zp$ here!).
Hence, $x=y=0$ follows and
the submodule $U = \Zp \tilde{\gamma} + \Zp \tilde{\delta}$ is in fact
$U= \Zp \tilde{\gamma} \oplus \Zp \tilde{\delta}$, a free module of rank $2$. Its Gram matrix is
$$H = \mat{p^w a & 1 \\ 1 & p^s b}$$
in particular, as we have seen above, $\det(H)$ is a unit in $\Zp$. Consequently,
$U$ is unimodular and therefore it can be split off orthogonally (see \cite{kneser}, Satz 1.6 on p.2), i.e.
$M = U \orthplus U^\bot$. As $\Zp$ is a principal ideal domain and $U^\bot$ is a submodule of 
the freely, finitely generated $\Zp$ module $M$, $U^\bot$ is free again 
(see \cite{jantzen-schwermer}, chapter VII, Satz 8.3 on p.172) and
$$r = \operatorname{rank}(M) = \operatorname{rank}(U) + \operatorname{rank}(U^\bot) = 2 + \operatorname{rank}(U^\bot)$$
so $\operatorname{rank}(U^\bot) = r-2$.
\end{proof}

\begin{lem}
\label{lem:finish:p=odd}
Let $p$ be an odd prime, $e \in \N$, put $q := p^e$ and let $D$ be a discriminant form with $D \cong (\Z_q)^n$ with
$$n \geq 
  \begin{cases} 
	  5 & \text{if $e=1$} \\
		2 & \text{if $e\geq 2$}
	\end{cases}
$$
then $D$ contains two isotropic, orthogonal, $\Z_p$--linearly independent vectors.
\end{lem}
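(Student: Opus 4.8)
The plan is to read the quadratic form off a Jordan splitting and then handle $e\ge 2$ and $e=1$ by completely different mechanisms. Since the underlying group of $D$ is $(\Z_q)^n$ with $q=p^e$, $p$ odd, Theorem~\ref{thm:jordan} forces every Jordan constituent of $D$ to be a copy of $\Z_{p^e}$; so fix a Jordan basis $\gamma_1,\dots,\gamma_n$ with $(\gamma_i,\gamma_j)=0$ for $i\ne j$, $(\gamma_i,\gamma_i)=\tfrac{a_i}{p^e}$ for integers $a_i$ coprime to $p$, and $Q(\gamma_i)=\tfrac{c_i}{p^e}+\Z$ for integers $c_i$ with $2c_i\equiv a_i\bmod p^e$. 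Because the $\gamma_i$ are pairwise orthogonal, $Q\bigl(\sum_i x_i\gamma_i\bigr)=\tfrac{\sum_i c_i x_i^2}{p^e}+\Z$ for all integers $x_i$.

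For $e\ge 2$, where only $n\ge 2$ is assumed, I would use a one-line scaling trick: set $\delta:=p^{e-1}\gamma_1$ and $\mu:=p^{e-1}\gamma_2$. Then $Q(\delta)=p^{2(e-1)}Q(\gamma_1)=c_1p^{e-2}+\Z=0+\Z$ since $e\ge 2$, and likewise $Q(\mu)=0+\Z$; orthogonality $(\delta,\mu)=p^{2(e-1)}(\gamma_1,\gamma_2)=0$ is immediate. Both $\delta,\mu$ are nonzero of order $p$ and lie in the distinct Jordan components $\Z_{p^e}\gamma_1$ and $\Z_{p^e}\gamma_2$, so $\langle\delta\rangle\cap\langle\mu\rangle=\{0\}$; in particular $a\delta+b\mu=0$ with $a,b\in\Z$ forces $p\mid a$ and $p\mid b$, which is the required $\Z_p$-linear independence.

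For $e=1$ the group is $D\cong(\Z_p)^n\cong\mathbb{F}_p^n$, and by the displayed formula $Q$ is (equivalent to) the nondegenerate diagonal quadratic form $\sum_i c_i x_i^2$ over $\mathbb{F}_p$, whose associated bilinear form is the pairing $(\cdot,\cdot)$ under the identification $\tfrac1p\Z/\Z\cong\mathbb{F}_p$. I would then invoke the classical fact that over $\mathbb{F}_p$ with $p$ odd every nondegenerate quadratic form in at least $3$ variables is isotropic (e.g.\ by Chevalley--Warning, or a direct count: over a finite field there are no anisotropic forms of dimension $>2$). Applied to $D$ this yields a nonzero $\delta$ with $Q(\delta)=0$; then $(\delta,\delta)=2Q(\delta)=0$, so $\delta\in\delta^\bot$, and $\delta^\bot/\langle\delta\rangle$ carries a well-defined induced quadratic form (well-defined since $Q(\xi+c\delta)=Q(\xi)$ for $\xi\in\delta^\bot$) which is again nondegenerate (its radical equals $((\delta^\bot)^\bot\cap\delta^\bot)/\langle\delta\rangle=\langle\delta\rangle/\langle\delta\rangle=0$) and has dimension $n-2\ge 3$. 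Applying the same fact once more gives a nonzero isotropic class there; lifting it to some $\mu\in\delta^\bot\setminus\langle\delta\rangle$ gives $Q(\mu)=0$, while $(\delta,\mu)=0$ because $\mu\in\delta^\bot$, and $\mu\notin\langle\delta\rangle$ makes $\delta,\mu$ $\mathbb{F}_p$-linearly independent, i.e.\ $\langle\delta\rangle\cap\langle\mu\rangle=\{0\}$. Equivalently, one can just quote that the Witt index of a nondegenerate $n$-dimensional form over $\mathbb{F}_p$ is at least $\lfloor(n-1)/2\rfloor\ge 2$ for $n\ge 5$ and take a basis of a totally isotropic plane.

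The case $e\ge 2$ is essentially the displayed computation, so the main obstacle is the bookkeeping in the case $e=1$: verifying that $\delta^\bot/\langle\delta\rangle$ inherits a nondegenerate quadratic form, and keeping track that it is precisely the inequality $n-2\ge 3$, i.e.\ $n\ge 5$, that legitimizes the second application of the finite-field isotropy fact. It is worth remarking that this bound is sharp for $e=1$: a $4$-dimensional nondegenerate form over $\mathbb{F}_p$ can have Witt index $1$ (when its anisotropic core is $2$-dimensional), hence no totally isotropic plane and no such pair $\delta,\mu$.
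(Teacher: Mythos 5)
Your proof is correct. For $e\ge 2$ you use exactly the same one-line scaling device as the paper ($p^{e-1}\gamma_1$, $p^{e-1}\gamma_2$ on two distinct Jordan constituents); no meaningful difference there. For $e=1$, however, you take a genuinely different route. The paper stays inside its $\Zp$-machinery: it lifts the Gram matrix to a unimodular symmetric form over $\Zp$, invokes the classification from its appendix (Thm.~\ref{thm:only-2-unimodular-forms-over-odd-p}, via Rmk.~\ref{rmk:change-of-basis-Zp-vs-ZmodpZ}) to diagonalize it to $\diag(1,-1,1,-1,\dots,\pm 1\text{ or }\epsilon)$, and then reads off $\delta_1+\delta_2$, $\delta_3+\delta_4$ explicitly; the condition $n\ge 5$ is used so that both candidate canonical forms share the leading block $\diag(1,-1,1,-1)$ regardless of the determinant class. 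You instead work entirely over $\mathbb{F}_p$: you observe that $Q$ becomes a nondegenerate diagonal quadratic form (the $c_i$ are units because $2c_i\equiv a_i$ with $(a_i,p)=1$ and $2\in\Z_{p^e}^\times$), and then run a standard Witt-style descent — isotropic vector exists in dimension $\ge 3$ by Chevalley--Warning, pass to $\delta^\bot/\langle\delta\rangle$ which has dimension $n-2\ge 3$ with a well-defined nondegenerate induced form, repeat, and lift. Your verification that the induced form is well-defined and nondegenerate (radical computation via $(\delta^\bot)^\bot=\langle\delta\rangle$) is exactly what is needed and is carried out correctly; the observation that the span of your $\delta,\mu$ is totally isotropic is also right, and the sharpness remark about Witt index $1$ in dimension $4$ is accurate. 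The trade-off: your argument is more elementary and self-contained, needing only finite-field isotropy rather than the $p$-adic classification theorem; the paper's version reuses appendix material that it must develop anyway for Lemma~\ref{lem:primitive-high-norm-splits} and the main theorem, so within the paper's economy it costs nothing extra and produces the pair explicitly in one step.
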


\begin{proof}
Let $e=1$, i.e. $q=p$ for an odd prime $p$.
Let $\Gamma = \{\gamma_1, ..., \gamma_n\}$ be such that
$D = \Z_q \gamma_1 \oplus ... \oplus \Z_q \gamma_n$.
We let $H, G, \tilde{G}$ be as in Rmk. \ref{rmk:change-of-basis-Zp-vs-ZmodpZ}.
Choose a fixed $\epsilon \in \Zp^\times$ that is not a square
(in fact, one can choose $\epsilon \in \Z$ such that $(\epsilon,p)=1$ and
$\epsilon$ is not a square in any of the rings $\Z_{p^r}, r \in \N$), then
$$\Zp^\times/(\Zp^\times)^2 = \{(\Zp^\times)^2, \epsilon (\Zp^\times)^2 \}$$
(see \cite{cassels}, Cor. on p.40 or almost any other book on $p$-adic numbers).
We put
\begin{align*}
  \tilde{A} &:= \diag(1, -1, 1, -1, 1, ..., 1, 1) \in \GL_n(\Zp),\\
	\tilde{B} &:= \diag(1,-1,1,-1,1, ..., 1,\epsilon) \in \GL_n(\Zp)
\end{align*}
As $\det(\tilde{A}) = 1, \det(\tilde{B}) = \epsilon$, the determinants of
these forms exhaust $\Zp^\times / (\Zp^\times)^2$ completely. By 
Thm. \ref{thm:only-2-unimodular-forms-over-odd-p}
the bilinear form induced by $\tilde{G}$ is either isomorphic to 
the one induced by $\tilde{A}$ or to the one induced by $\tilde{B}$.
Hence, we get an $\tilde{S} \in \GL_n(\Zp)$ such that either
$\tilde{S}^T \tilde{G} \tilde{S} = \tilde{A}$ or 
$\tilde{S}^T \tilde{G} \tilde{S} = \tilde{B}$.
In any case, using Rmk. \ref{rmk:change-of-basis-Zp-vs-ZmodpZ}, we obtain a new basis
$D = \Z_p \delta_1 \orthplus ... \orthplus \Z_p \delta_n$ such that the Gram matrix w.r.t. 
this basis is given by
$p^{-1} R_{p}(\tilde{S}^T \tilde{G} \tilde{S}) + \Z$ which is either
$p^{-1} R_{p}(\tilde{A}) + \Z$ or $ = p^{-1} R_{p}(\tilde{B}) + \Z$.
In either case, the first part looks like $p^{-1} \diag(1, -1, 1, -1, ...)$
so, $\delta_1+\delta_2, \delta_3+\delta_4$ is a pair of orthogonal, 
isotropic, $\Z_p$--linearly independent vectors.

In the case that $e>1$, we use Thm. \ref{thm:jordan} to choose a 
Jordan decomposition, i.e.
a basis such that $D = \Z_q \gamma_1 \orthplus ... \orthplus \Z_q \gamma_n$.
Then $p^{e-1}\gamma_1, p^{e-1}\gamma_2$ are isotropic
($Q(p^{e-1}\gamma_i) = p^{2(e-1)}Q(\gamma_i) = p^{2(e-1)}\tfrac{*}{p^e} + \Z = 0 + \Z$ 
as $2(e-1) \geq e$ as $e \geq 2$) and as $\gamma_1, \gamma_2$ were $\Z_q$--linearly independent, 
$p^{e-1}\gamma_1, p^{e-1}\gamma_2$ are $\Z_p$--linearly independent.
As $\gamma_1, \gamma_2$ were orthogonal, $p^{e-1}\gamma_1, p^{e-1}\gamma_2$ are orthogonal.
\end{proof}

\begin{lem}
\label{lem:finish:p=2}
Let $e \in \N$, $q := 2^e$ and let $D$ be a discriminant form with $D \cong (\Z_q)^n$ with
$$n \geq 
  \begin{cases} 
	  7 & \text{if $e=1$ or $e=2$} \\
		3 & \text{if $e\geq 3$}
	\end{cases}
$$
then $D$ contains two isotropic, orthogonal $\Z_2$-- or $\Z_q$--linearly independent vectors.
\end{lem}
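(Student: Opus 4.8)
The plan is to follow the proof of Lemma~\ref{lem:finish:p=odd} and deploy its two mechanisms according to the size of $e$: for large $e$ I will scale down a Jordan basis until the relevant values of $Q$ become integral, and for small $e$ I will pass to a unimodular $2$--adic lattice and split off a totally isotropic plane.

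First I would treat $e\geq 3$. By Thm.~\ref{thm:jordan} fix a Jordan splitting $D = C_1\orthplus\dots\orthplus C_m$; since the group $D\cong(\Z_q)^n$ is $2$--adically homogeneous of exponent $e$, every constituent is of type (2) or (3), so each $C_j$ has $\Z_q$--rank at most $2$ and therefore $n\geq 3$ forces $m\geq 2$. Choose a generator $\gamma_1$ of a cyclic factor of $C_1$ and a generator $\gamma_2$ of a cyclic factor of $C_2$, and set $\delta := 2^{e-1}\gamma_1$, $\mu := 2^{e-1}\gamma_2$. The values of $Q$ on Jordan generators have denominator dividing $2^{e+1}$, and $2(e-1)\geq e+1$ exactly when $e\geq 3$, so $Q(\delta) = 2^{2(e-1)}Q(\gamma_1) = 0+\Z$ and likewise $Q(\mu) = 0+\Z$; thus $\delta,\mu$ are isotropic, and they are orthogonal since $C_1\bot C_2$. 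For linear independence I would lift the Jordan splitting block-wise to an orthogonal decomposition of the associated $\Z_2$--lattice via Rmk.~\ref{rmk:change-of-basis-Zp-vs-ZmodpZ}: the lifts $\widetilde\gamma_1,\widetilde\gamma_2$ lie in distinct orthogonal free summands, hence are $\Z_2$--linearly independent, and multiplying by the nonzero scalar $2^{e-1}\in\Z_2$ leaves them independent while turning them into lifts of $\delta$ and $\mu$.

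Next I would treat $e\in\{1,2\}$, where $n\geq 7$. By Rmk.~\ref{rmk:change-of-basis-Zp-vs-ZmodpZ}, up to rescaling by $2^e$, $D$ is the reduction modulo $2^e$ of a unimodular $\Z_2$--lattice $\widetilde D$ of rank $n$. Over $\Q_2$ the ambient quadratic form has $u$--invariant $4$, so any unimodular $\Z_2$--lattice of rank $\geq 5$ is $\Q_2$--isotropic and hence splits off a hyperbolic plane $U = \tmat{0&1\\1&0}$ over $\Z_2$ (see \cite{cassels} and \cite{kneser}; this is the $p=2$ counterpart of Thm.~\ref{thm:only-2-unimodular-forms-over-odd-p}). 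Applying this twice --- which is precisely where $n\geq 7$ is used, since $U\orthplus U$ occupies four coordinates and the remaining rank-$\geq 3$ summand must absorb the additional $2$--adic invariants of $\widetilde D$, whereas for odd $p$ one spare coordinate sufficed --- yields $\widetilde D = U\orthplus U\orthplus W$. Writing $e_1,\dots,e_4$ for the standard hyperbolic bases of the two copies of $U$, we have $\langle e_1,e_1\rangle = \langle e_3,e_3\rangle = \langle e_1,e_3\rangle = 0$ and $e_1,e_3$ are $\Z_2$--linearly independent, so reducing them modulo $2^e$ produces two isotropic, orthogonal, $\Z_2$--linearly independent vectors of $D$, as required.

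I expect the case $e\in\{1,2\}$ to be the main obstacle: in contrast to odd $p$, unimodular $\Z_2$--lattices are not diagonalizable and are governed by several extra invariants, so the real content is the classical but genuinely $2$--adic statement that rank $7$ already forces an orthogonal summand $U\orthplus U$; this replaces the explicit $\diag(1,-1,1,-1,\ast)$ computation of the odd case, which does not survive the passage from the bilinear form to $Q$ when $p=2$. The case $e\geq 3$ should by comparison be routine bookkeeping with Thm.~\ref{thm:jordan}, the only point of care being that $\Z_2$--linear independence must be checked on lifts to $\widetilde D$ rather than on $D$ itself.
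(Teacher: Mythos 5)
Your $e\ge 3$ argument is essentially the paper's: one takes two distinct Jordan constituents, scales their generators by $2^{e-1}$, checks $2(e-1)\ge e+1$, and deduces orthogonality and $\Z_2$--linear independence from the orthogonal splitting of constituents. Your lift--to--$\Z_2$ variant of the independence argument is sound but not necessary.

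For $e\in\{1,2\}$ there is a genuine gap, precisely at the step ``reducing $e_1,e_3$ modulo $2^e$ produces two isotropic \ldots vectors of $D$.'' What your argument actually yields is $\langle e_1,e_1\rangle = 0$ for a bilinear form over $\Z_2$ obtained by lifting the bilinear pairing $(\cdot,\cdot)$ via Rmk.~\ref{rmk:change-of-basis-Zp-vs-ZmodpZ}. Upon reduction this gives $(\overline{e}_1,\overline{e}_1)=0+\Z$, i.e.\ $2Q(\overline{e}_1)=0+\Z$, which only pins $Q(\overline{e}_1)$ down to $\{0,\tfrac12\}+\Z$. For $p=2$ the quadratic form is \emph{not} recoverable from the bilinear form: on an odd Jordan block $\Z_q\gamma$ one has $(\gamma,\gamma)=a/2^e$ but $Q(\gamma)=(a+v2^e)/2^{e+1}$ with an extra parameter $v\in\{0,1\}$ that the bilinear form discards, and the lift $G$ appearing in Rmk.~\ref{rmk:change-of-basis-Zp-vs-ZmodpZ} carries only the $a_i$, not the $a_i + v_i2^e$. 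A concrete counterexample to your step: take $D=(\Z_2)^7$ with $Q(\gamma_1)=3/4$, $Q(\gamma_i)=1/4$ for $i>1$, all $\gamma_i$ mutually orthogonal. The bilinear lift is $\diag(1,\dots,1)$, and a $\Z_2$--isotropic $\tilde y$ with $y_1$ a unit reduces to $\overline y$ with $Q(\overline y)=\tfrac12\neq 0$. The paper avoids this by passing deliberately to the ``wrong'' Gram matrix $\tilde G$ with diagonal entries $a_i+v_i2^e$ (which is \emph{not} the matrix of Rmk.~\ref{rmk:change-of-basis-Zp-vs-ZmodpZ}) and then proving the identity $Q(y) = R^\Z_{2^{e+1}}\bigl(\langle\tilde y,\tilde y\rangle\bigr)/2^{e+1}+\Z$ via the observation $(a+v2^e)^2\equiv a^2\pmod{2^{e+1}}$; that computation is the real content of the $p=2$ case. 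Your proposal omits this and asserts the conclusion. The remaining structural differences (you split off $U\orthplus U$ directly from the rank--$\ge 7$ lattice, the paper cuts $D = D_1\orthplus D_2$ of rank $\ge 3$ each and finds one isotropic vector in each; you also do not need the split summands to be literally $\tmat{0&1\\1&0}$, only $\tmat{0&1\\1&*}$) are harmless, but they do not replace the quadratic--versus--bilinear bookkeeping.
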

\begin{proof}
We take any fixed Jordan decomposition of $D$ (see Thm. \ref{thm:jordan}).
By basic algebra, the decomposition of an abelian finite group into powers of 
$\Z_{p^r}$ for primes $p$ and $r \in \N$ is unique 
(see for example, \cite{jantzen-schwermer}, Satz 5.14 and Satz 5.16),
hence, the Jordan decomposition of $D$ can only be built up from
odd blocks $\Z_q$ or even blocks $\Z_q \oplus \Z_q$ 
(no other prime and no other power occurs).
Let $e \geq 3$.
It does not matter how precisely the Jordan splitting of $D$ looks like, 
since $n \geq 3$, we can find a decomposition
$D = D_1 \orthplus D_2$ and there is at least one Jordan constituent in $D_1$ 
and there is at least one other Jordan constituent in $D_2$.
For $e \geq 3$, every Jordan constituent $C$ (no matter whether it is even or odd)
contains an isotropic vector of order $2$: Assume $C$ is even. 
Then there is a basis $C = \Z_q \gamma \oplus \Z_q \delta$.
If $C$ is of type (A), then $\gamma$ is isotropic. Hence, $2^{e-1}\gamma$ 
is isotropic as well and of order $2$.
If $C$ is of type (B), then still, $2^{e-1}\gamma$ is isotropic and of order $2$:
$$Q(2^{e-1}\gamma) = 2^{2(e-1)} Q(\gamma) = \frac{2^{2(e-1)}}{2^e} + \Z = 0 + \Z$$
as $2(e-1) \geq e$ as $e \geq 3 \geq 2$.
Suppose $C$ is an odd block. Then $C = \Z_q \gamma$ with $Q(\gamma) = \tfrac{a + v2^e}{2^{e+1}} + \Z$
and
$$ Q(2^{e-1} \gamma) = (a + v2^e) \frac{2^{2(e-1)}}{2^{e+1}} + \Z = (a + v2^e) \cdot (0 + \Z) = 0 + \Z$$
as $2(e-1) \geq e+1$ because $e \geq 3$. So, $2^{e-1}\gamma$ is isotropic of order $2$.
Summing it all up, we can take an isotropic vector of order $2$ from $D_1$ and another one from $D_2$.
As $D = D_1 \orthplus D_2$, those vectors are orthogonal and $\Z_2$--linearly independent.
Now let $e=1$ or $e=2$.

  Since the original rank was greater or equal to $7$, we can find a 'cut' through the Jordan splitting of $D$
	giving $D = D_1 \orthplus D_2$ and the rank (as $\Z_q$ module) of $D_1$ and $D_2$ both being $\geq 3$.
  Hence, we are done, if we show that in each of them, there is an isotropic vector of order $2^e$.
  
	So now let $D$ a $\Z_{2^e}$--module of rank greater or equal to $3$ 
	with a fixed Jordan splitting (see Thm. \ref{thm:jordan}).
  Let us denote the basis by $\mu_1, \delta_1, \mu_2, \delta_2, ..., \mu_r, \delta_r, \alpha_1, ..., \alpha_s$
  where the $\mu_i, \delta_i$ generate the even components and the $\alpha_i$ generate the odd components.
  Let the values of the quadratic form and bilinear form be $x_i, v_i, f_i$ as in Thm. \ref{thm:jordan},
	for example
  $Q(\alpha_i) = (f_i + v_i 2^e)/2^{e+1} + \Z$.
  Again we pass the problem to \Ztwo but this time we have to pass the "wrong" Gram matrix
  because of the division by two. More precisely we consider the matrix
  $$
  \tilde{G} := \mat{
  x_1 & 1   & \\
   1  & x_1 & \\
      &     & \ddots \\
      &     &         & x_r & 1 \\
      &     &         &  1  & x_r \\
      &     &         &     &     & a_1 + v_1 2^e \\
      &     &         &     &     &               & \ddots \\
      &     &         &     &     &               &        & a_r + v_r 2^e 
  }$$
  to be the Gram matrix of a bilinear form $\ideal{\cdot, \cdot}$ (without associated quadratic form!) of the
  abstract free \Ztwo module $\Ztwo^{2r + s}$.
  Notice that $\tilde{G}$ is invertible and
	hence, in the language of \cite{kneser}, Satz (15.8)
  this is a regular form. By this theorem, it splits a hyperbolic plane, i.e. there is a vector
  $\tilde{y} \in \Ztwo^{2r + s}$ such that $\ideal{\tilde{y},\tilde{y}} = 0$. Cancelling
  all $2$-powers in the coordinates of $\tilde{y}$ if necessary, we may assume $\tilde{y}$ to be primitive:
	if $\ideal{cv, cv} = 0$ for some $v \in \Ztwo^n$ and $c \in \Ztwo, c \neq 0$, then $0 = \ideal{cv, cv} = c^2 \ideal{v,v}$.
	As $\Ztwo$ is free of zero divisors, $\ideal{v,v} = 0$.
  Hence, there is a coordinate $i$ such that $\tilde{y}_i \in \Ztwo^\times$. Hence, if we take $R_{2^e}^\Z$ 
  coordinate wise and call the result $\overrightarrow{y} \in \Z^{2r + s}$ then there is an $i$ such that
  $\gcd(y_i, 2) = 1$. Thus, if we interpret $\overrightarrow{y}$ as an element $y \in D$ by writing 
	the coordinates in front of the participants of the Jordan basis, 
	$y$ is of order $2^e$ in $D$ 
	($x\cdot y_i \equiv 0 \mod 2^e \Rightarrow x \equiv 0 \mod 2^e$).
  Further, we claim that it is isotropic: We name the coordinates of $y$ and $\tilde{y}$ to be
  \begin{align*}
    \tilde{y}          &= (\tilde{a}_1, \tilde{b}_1, ..., \tilde{a}_r, \tilde{b}_r, \tilde{c}_1, ..., \tilde{c}_s) \\
    \overrightarrow{y} &= (a_1, b_1, ..., a_r, b_r, c_1, ..., c_s)
  \end{align*}
  then
  \begin{align*}
    Q(y) &= \sum_{i=1}^{r} a_i^2 Q(\mu_i) + b_i^2 Q(\delta_i) + a_i b_i (\mu_i, \delta_i) + \sum_{j=1}^s c_j^2 Q(\alpha_j) \\
         &= \sum_{i=1}^{r} \frac{x_i(a_i^2 + b_i^2) + 2a_i b_i}{2^{e+1}} + \sum_{j=1}^s \frac{c_j^2 (f_i + v2^e)}{2^{e+1}} + \Z
  \end{align*}
  Generally speaking, consider a term of the form
  $$\frac{xa^2}{2^{e+1}} + \Z$$ where $x \in \Z$ and $a = R^\Z_{2^e}(\tilde{a})$ for some $\tilde{a} \in \Ztwo$.
  If we wanted to write $R^\Z_{2^{e+1}}(\tilde{a})$ in place of $R^\Z_{2^e}(\tilde{a})$ 
  we would make some mistake of the form $v2^e$ but we have
  $$(a + v2^e)^2 \equiv a^2 + 2av2^e + v2^{2e} \equiv a^2 \mod 2^{e+1}$$
  hence,
  $$\frac{xR^\Z_{2^e}(\tilde{a})^2}{2^{e+1}} + \Z = \frac{xR^\Z_{2^{e+1}}(\tilde{a})^2}{2^{e+1}} + \Z$$
  so that after proceeding analogously with the terms $2 a_i b_i$ 
  (the missing $2$ is right in front and will get used twice!)
  the above sum becomes
  \begin{align*}
    Q(y) &= \sum_{i=1}^{r} 
               \frac{x_i(R^\Z_{2^{e+1}}(\tilde{a}_i)^2 + R^\Z_{2^{e+1}}(\tilde{b}_i)^2) + 2R^\Z_{2^{e+1}}(\tilde{a}_i) R^\Z_{2^{e+1}}(\tilde{b}_i)}
                    {2^{e+1}} \\
               &\quad + \sum_{j=1}^s \frac{R^\Z_{2^{e+1}}(\tilde{c}_i)^2 (f_i + v2^e)}{2^{e+1}} + \Z
  \end{align*}
  but as $R_{2^{e+1}}(\cdot) = R^\Z_{2^{e+1}}(\cdot) \mod 2^{e+1}$ is a ring homomorphism 
	(and we divide by no more than $2^{e+1}$ and have a '+\Z'), 
  this is nothing else than
  \begin{align*}
    Q(y) &= \frac{
              R^\Z_{2^{e+1}} \left(
                \sum_{i=1}^{r} 
                  x_i(\tilde{a}_i^2 + \tilde{b}_i^2) + 2\tilde{a}_i\tilde{b}_i
                + \sum_{j=1}^s \tilde{c}_i^2 (f_i + v2^e) \right)
            }{2^{e+1}} + \Z \\
         &= \frac{ R^\Z_{2^{e+1}} \ideal{\tilde{y}, \tilde{y}}}{2^{e+1}} + \Z \\
         &= \frac{ R^\Z_{2^{e+1}}(0)} {2^{e+1}} + \Z \\
         &= 0 + \Z
  \end{align*}
	All in all,
  $$y = a_1 \mu_1 + b_1 \delta_1 + ... + a_r \mu_r + b_r \delta_r + c_1 \alpha_1 + ... c_s \alpha_s$$
	is an isotropic element of order $2^e$.
\end{proof}

\section{Almost everything is an oldform}
\label{sec:main-theorem}
In this section we will show that if $D$ is 'too big', i.e. a part of the form $\Z_{p^e}$ is repeated too often, every modular form is an oldform.

\noindent Let 
\begin{align*}
  D= &\left(\Z_{2^1} \oplus \Z_{2^1} \right)^{e_1} \orthplus  \left(\Z_{2^2} \oplus \Z_{2^2} \right)^{e_2} \orthplus ...\\
     &  \orthplus (\Z_{2^1})^{o_1} \orthplus (\Z_{2^2})^{o_2} \orthplus ... \\
     &  \orthplus (\Z_{p_1^1})^{e_{1,1}} \orthplus (\Z_{p_1^2})^{e_{1,2}} \orthplus ... \orthplus (\Z_{p_1^{A_1}})^{e_{1,A_1}} \\
     &  \orthplus ...
\end{align*}
be a fixed Jordan splitting of a discriminant form $D$ (see Thm. \ref{thm:jordan}). 
The numbers $e_i$ are called the even $2^i$--ranks of $D$,
the numbers $o_i$ are called the odd $2^i$--rank of $D$ and
the numbers $e_{i,j}$ are called the $p_i^j$--ranks of $D$.

\noindent Jordan splittings of discriminant forms are not unique.
Let us assume that we have two different jordan splittings
\begin{align*}
  D= &\left(\Z_{2^1} \oplus \Z_{2^1} \right)^{e_1} \orthplus  \left(\Z_{2^2} \oplus \Z_{2^2} \right)^{e_2} \orthplus ...\\
     &  \orthplus (\Z_{2^1})^{o_1} \orthplus (\Z_{2^2})^{o_2} \orthplus ... \\
     &  \orthplus (\Z_{p_1^1})^{e_{1,1}} \orthplus (\Z_{p_1^2})^{e_{1,2}} \orthplus ... \orthplus (\Z_{p_1^{A_1}})^{e_{1,A_1}} \\
     &  \orthplus ...\\
	 = &\left(\Z_{2^1} \oplus \Z_{2^1} \right)^{e'_1} \orthplus  \left(\Z_{2^2} \oplus \Z_{2^2} \right)^{e'_2} \orthplus ...\\
     &  \orthplus (\Z_{2^1})^{o'_1} \orthplus (\Z_{2^2})^{o'_2} \orthplus ... \\
     &  \orthplus (\Z_{p_1^1})^{e'_{1,1}} \orthplus (\Z_{p_1^2})^{e'_{1,2}} \orthplus ... \orthplus (\Z_{p_1^{A_1}})^{e'_{1,A_1}} \\
     &  \orthplus ...
\end{align*}
By basic algebra, the decomposition of a finite abelian group into powers of 
$\Z_{p^e}$ for primes $p$ and $e \in \N$ is unique 
(see for example, \cite{jantzen-schwermer}, Satz 5.14 and Satz 5.16).
Consequently, we obtain
\begin{equation}
\label{eq:jordanDecompOddUnique}
e_{i,j} = e'_{i,j} ~ \text{for all $i,j \in \N \cup \{0\}$}
\end{equation}
and
\begin{equation}
\label{eq:jordanDecompEvenUnique}
o_j + 2e_j = o'_j + 2e'_j ~ \text{for all $j \in \N \cup \{0\}$}
\end{equation}
This implies the following:

\begin{rmk}
\label{rmk:uniquenessOfJordan}
Let $D$ be a discriminant form and $U, V$ $\Z$--submodules such that
$D = U \orthplus V$. It is easy to show that in this case, $U$ and $V$ are 
discriminant forms again (the crucial insight being that the restriction of 
$(\cdot, \cdot)$ to $U$ and $V$ is non-degenerate) and $V = U^\bot$.
Say $D = U \orthplus U^\bot$ possesses a Jordan decomposition $\Z_q \oplus ... \oplus \Z_q = (\Z_q)^c$ 
for one fixed prime power $q = p^{j_0}$. As $U, U^\bot$ are discriminant forms, 
using Thm. \ref{thm:jordan} we can choose Jordan 
decompositions of $U$ and $U^\bot$.
Let $e_j, o_j, e_{i,j}$ be the even and odd $2$-adic and $p$-adic ranks of 
$U$ and let $e_j', o'_j, e'_{i,j}$ be those of $U^\bot$.
Putting together the Jordan decompositions for $U$ and $U^\bot$ 
yields a new Jordan decomposition for $D$. Now we have two Jordan decompositions:
$$\Z_q \oplus ... \oplus \Z_q \cong D \cong U \orthplus U^\bot$$
so, by eqs \eqref{eq:jordanDecompOddUnique}, \eqref{eq:jordanDecompOddUnique},
if $p$ was odd, then $U, U^\bot$ also have Jordan decompositions 
$$U \cong (\Z_q)^a, U^\bot \cong (\Z_q)^b ~ \text{with $a+b=c$}$$
and if $p=2$ then all the Jordan constituents of 
$U$ and $U^\bot$ are only $2$-adic and either odd and of the form $\Z_{2^{j_0}}$ 
or even and of the form $\Z_{2^{j_0}} \oplus \Z_{2^{j_0}}$ with
  $$ o_{j_0} + 2e_{j_0} = o'_{j_0} + 2e'_{j_0}$$	
\end{rmk}

\noindent We summarize in the following Theorem:
\begin{thm}
  \label{thm:prank-geq-oldform}
  Let $\D = (D, Q)$ be a discriminant form with a fixed Jordan splitting as above.
  If there exists a prime $p = p_i$ and an exponent $e=e_{i,j}$ 
	(respectively exponent\textbf{s} $e=e_j, o=o_j$ if $p=2$) such that one of the following is true; 
  \begin{enumerate}[(i)]
    \item $p$ is odd and $j=1$ and $e \geq 7$
		\item $p$ is odd and $j>1$ and $e \geq 4$
		\item $p=2$ and $j=1$ or $j=2$ and $e+o \geq 9$
		\item $p=2$ and $j\geq 3$ and $e+o \geq 5$ 
  \end{enumerate}
	then every vector valued modular form for $D$ is an oldform.
\end{thm}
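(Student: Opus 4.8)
The plan is to reduce the statement to the surjectivity of the purely algebraic up-map $\uparrow$ (involving all isotropic subgroups), which by Lemma \ref{lem:all-old} suffices. Since $\uparrow$ is $\C$-linear, surjectivity means $\e_\gamma \in \im(\uparrow)$ for every $\gamma \in D$. By Lemma \ref{lem:egammaimage} and Lemma \ref{lem:existence-nicely-orth-gamma-general}, it is enough to produce, for each $\gamma \in D$, two isotropic, mutually orthogonal, sufficiently linearly independent vectors $\delta, \mu \in \gamma^\bot$ — and the hypotheses in Lemma \ref{lem:existence-nicely-orth-gamma-general} ask exactly for a prime $p$ and an exponent $e$ controlling how independent $\delta,\mu$ are (congruence mod $p^e$).

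First I would isolate the ``big'' Jordan constituent. By hypothesis there is a prime $p$ and an exponent $j$ such that the $p^j$-part of $D$ (or, for $p=2$, the combined even/odd $2^j$-part) has rank $\geq R$, where $R$ is $7, 4, 9, 5$ in the four cases. Using Theorem \ref{thm:jordan} to fix a Jordan splitting, I would peel off from this constituent a submodule $C$ of the appropriate rank (the value needed to apply Lemma \ref{lem:finish:p=odd} or Lemma \ref{lem:finish:p=2} to a single $\Z_{p^j}$-homogeneous or $2^j$-homogeneous piece), splitting $D = C \orthplus C^\bot$ orthogonally; Remark \ref{rmk:uniquenessOfJordan} guarantees that this splitting keeps $C$ of the stated homogeneous shape. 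The point of the extra ``$+2$'' or ``$+4$'' in the rank bounds beyond what Lemma \ref{lem:finish:p=odd}/\ref{lem:finish:p=2} needs is to leave enough room: given an arbitrary $\gamma \in D$, one wants the pair $\delta, \mu$ inside $C$ to also lie in $\gamma^\bot$.

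The core step is therefore: for a fixed $\gamma$, find inside the big homogeneous constituent a sub-constituent still large enough to run Lemma \ref{lem:finish:p=odd} (resp. \ref{lem:finish:p=2}) but now entirely inside $\gamma^\bot$. I would do this by writing $\gamma = \gamma_C + \gamma_{C^\bot}$ with respect to $D = C \orthplus C^\bot$; then $\gamma^\bot \cap C = \gamma_C^{\,\bot}$ inside $C$. Inside the homogeneous module $C \cong (\Z_{p^j})^R$, the orthogonal complement of a single element has corank at most one as a $\Z_{p^j}$-module after possibly splitting off a hyperbolic/unimodular rank-$\leq 2$ piece (this is where Lemma \ref{lem:primitive-high-norm-splits} and the $p$-adic machinery in Section \ref{sec:preparation} come in: pass to $\Zp$, use that $\gamma_C$ is $p^s$ times a primitive vector, split off the rank-$\le 2$ unimodular block containing it, and what remains is a homogeneous module of rank $\geq R-2 \geq$ the threshold of Lemma \ref{lem:finish:p=odd}/\ref{lem:finish:p=2}, orthogonal to $\gamma_C$ hence to $\gamma$). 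Applying the relevant finishing lemma to that remaining module yields the desired $\delta,\mu$; feeding them into Lemma \ref{lem:existence-nicely-orth-gamma-general} (with the prime $p$ and the exponent $e$ read off from the congruence there — $e = j$, or $e=1$ in the $e\geq 2$/$e\geq 3$ sub-cases where one has already multiplied by $p^{j-1}$) produces $p+1$ nicely orthogonal isotropic subgroups for $\gamma$, hence $\e_\gamma \in \im(\uparrow)$ by Lemma \ref{lem:egammaimage}.

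The main obstacle is the bookkeeping in this ``room to spare'' argument: one must check that after orthogonally splitting off the small block that absorbs $\gamma$, the leftover module is still genuinely $\Z_{p^j}$-homogeneous (not a mix of different exponents), so that Lemma \ref{lem:finish:p=odd}/\ref{lem:finish:p=2} applies verbatim — this is exactly what Remark \ref{rmk:uniquenessOfJordan} is set up to handle, but it must be invoked carefully, and for $p=2$ the interplay between even blocks $\Z_{2^j}\oplus\Z_{2^j}$ and odd blocks $\Z_{2^j}$ (and the fact that splitting off a rank-$2$ unimodular piece may convert between them) is where the argument is most delicate. A secondary subtlety is matching the exponent $e$ in Lemma \ref{lem:existence-nicely-orth-gamma-general} to the constituent: for $j=1$ one needs $\delta,\mu$ genuinely independent mod $p$, whereas for $j\geq 2$ one uses $p^{j-1}\gamma_1, p^{j-1}\gamma_2$ and only needs independence mod $p$, which is why the rank thresholds drop; I would simply case on $j$ as in Lemmas \ref{lem:finish:p=odd} and \ref{lem:finish:p=2} and cite them directly. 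Once $\e_\gamma \in \im(\uparrow)$ for all $\gamma$, $\uparrow$ is surjective and Lemma \ref{lem:all-old} finishes the proof.
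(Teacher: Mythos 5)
Your proposal is correct and reproduces the paper's own proof: reduce via Lemma \ref{lem:all-old} to surjectivity of $\uparrow$, then via Lemmas \ref{lem:egammaimage} and \ref{lem:existence-nicely-orth-gamma-general} to producing two isotropic, orthogonal, independent vectors in $\gamma^\bot$, restrict to the big homogeneous Jordan constituent $D_{\text{imp}}$, split off a rank-$\leq 2$ unimodular block containing a primitive multiple of $\gamma$'s $D_{\text{imp}}$-component (a direct rank-$1$ split if its norm is a $p$-adic unit, otherwise Lemma \ref{lem:primitive-high-norm-splits}), and finish by applying Lemmas \ref{lem:finish:p=odd}/\ref{lem:finish:p=2} to the residual rank-$(\geq R-2)$ module, which Remark \ref{rmk:uniquenessOfJordan} guarantees stays homogeneous. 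The only nits are that the margin needed beyond the finishing lemmas is always $2$ (not sometimes $4$), and you omit the trivial $\gamma=0$ case, where $\gamma^\bot=D$.
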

\begin{proof}
By Lemma \ref{lem:all-old}, it suffices to see that the algebraic part $\uparrow$ is 
surjective, so this is what we will show now.
We will prove that $\e_\gamma \in \im(\uparrow)$ for all $\gamma \in D$.
For doing this in turn, we are going to use Lemma \ref{lem:existence-nicely-orth-gamma-general}, so
it remains to show that 
\setlength{\jot}{0pt}
\begin{align}
	\begin{split}
    \label{eq:prank-geq-oldform:toshow}
	  &\text{For every $\gamma \in D$, there are is a prime $p$, an exponent $e$ and two}  \\
    &\text{isotropic, orthogonal, $\Z_{p^e}$--linearly independent vectors in $\gamma^\bot$.}
  \end{split}
\end{align}
Generally speaking, let $D = D_1 \orthplus D_2$ for two sub-discriminant forms $D_1$ and $D_2$.
For an element $\gamma \in D_1$, we can consider two orthogonal complements:
One of them is $\gamma^\bot = \{\delta \in D : (\gamma, \delta) = 0\}$, and the other one is
$\gamma^\bot \cap D_1$ the second one meaning that we 
ignore the fact that $\gamma$ comes from a bigger discriminant form and 
view $D_1$ as a discriminant form on its own. Suppose we can show that
\begin{align}
	\begin{split}
    \label{eq:prank-geq-oldform:reduction}
	  &\text{For every $\gamma_1 \in D_1$, there is a prime $p$, an exponent $e$ and two}  \\
    &\text{isotropic, orthogonal, $\Z_{p^e}$--linearly independent vectors inside $\gamma^\bot \cap D_1$.}
  \end{split}
\end{align} \setlength{\jot}{3pt}
then we deduce \eqref{eq:prank-geq-oldform:toshow}: Let $\gamma = \gamma_1 + \gamma_2$.
Observe that $\gamma_1^\bot \cap D_1 \subset \gamma^\bot$: 
For if $\delta_1 \in D_1$ satisfies $(\delta_1, \gamma_1) = 0 + \Z$ then
$$(\delta_1, \gamma) = (\delta_1, \gamma_1) + (\delta_1, \gamma_2) = 0 + 0 + \Z = 0 + \Z$$
Hence, the two vectors inside $\gamma_1^\bot \cap D_1$ are also in $\gamma^\bot$ and
the fact that they are isotropic and $\Z_{p^e}$--linearly independent does not depend 
on whether we view them as elements of $D_1$ or as elements of $D$.
Hence, all we need to do is verify \eqref{eq:prank-geq-oldform:reduction}, then the theorem is proved.
Let $D_\text{imp} \subset D$ be the 'important' part of the discriminant form as demanded by the theorem, 
i.e.
$$
D_\text{imp} \cong 
  \begin{cases} 
	  \Z_{p}^{7} & \text{if $p$ is odd and $j=1$} \\
		\Z_{p^j}^{4} & \text{if $p$ is odd and $j>1$} \\
		(\Z_{2^j} \times \Z_{2^j})^a \orthplus \Z_{2^j}^b & \text{if $p=2$ and $j=1$ or $j=2$, where $a,b$} \\
		                                            & \text{are arbitrary with the property that $a + b \geq 9$} \\
		(\Z_{2^j} \times \Z_{2^j})^a \orthplus \Z_{2^j}^b & \text{if $p=2$ and $j\geq 3$, where $a,b$} \\
		                                            & \text{are arbitrary with the property that $a + b \geq 5$} \\
  \end{cases}
$$
the exponentiation (i.e. the algebraic sum or 'times') being orthogonal.
Then $D = D_\text{imp} \orthplus D_\text{rest}$. As we only need to verify
\eqref{eq:prank-geq-oldform:reduction}, it suffices to show the existence of two 
isotropic, orthogonal, independent vectors in the complement (inside $D_\text{imp}$!) 
of every $\gamma \in D_\text{imp}$, so we will assume $D=D_{\text{imp}}$ from now on!

\noindent We put $q := p^j$ so that $D$ is a freely, finitely generated $\Z_q$--module.
We also let
\begin{equation}
\label{eq:prank-geq-oldform:n}
n := 
  \begin{cases} 
	  7 & \text{if $p$ is odd and $j=1$} \\
		4 & \text{if $p$ is odd and $j>1$} \\
		a+b \geq 9 & \text{if $p=2$ and $j=1,2$} \\
		a+b \geq 5 & \text{if $p=2$ and $j \geq 3$}
  \end{cases}
\end{equation}
be the rank of $D$.

\noindent We use Thm. \ref{thm:jordan} to get a fixed Jordan basis $\Gamma = \{\gamma_1, ..., \gamma_n\}$.
Rmk. \ref{rmk:change-of-basis-Zp-vs-ZmodpZ} implies that
\begin{equation}
\label{eq:prank-geq-oldform:Gram}
\text{\parbox[b]{\textwidth}{
The Gramian matrix $H = ((\gamma_i, \gamma_j))_{i,j=1,...,n}$ is $H = p^{-e} G + \Z$
for some symmetric matrix $G \in \Z^{n \times n}$ with its $p$-adic version 
$\tilde{G}$ being invertible, i.e. unimodular.
}}
\end{equation}
and that changes of bases over $\Zp$ induce changes of bases for $D$.
The bilinear form over \Zp induced by $\tilde{G}$ will 
be denoted by $\ideal{\cdot, \cdot}_p$.
We make it clear once and for all that this does not immediately correspond to $(\cdot, \cdot)$,
for example: if $(\gamma, \delta) = a/q + \Z$ for some $a \in \Z$,
then all that we know is that there exists an $m \in \Z$ such that 
$\pform{\tilde{\gamma}, \tilde{\delta}} = \tilde{a} + mq$.

\noindent Let $0 \neq \gamma \in D$ be arbitrary (the case $\gamma=0$ is handled afterwards).
The proof consists of two steps: We compute 
a decomposition $D = U \orthplus U^\bot$ such that $\gamma \in U$.
Then $U^\bot \subset \gamma^\bot$ and we will find two vectors as announced inside $U^\bot$.
\\
\\
\noindent \underline{Case 1: $(\gamma, \gamma) = a/q + \Z$ with $(a,p)=1$.}

\noindent Let $\gamma = \smallsum_i a_i \gamma_i$ with a fixed choice $a_i \in \Z$.
We put $\tilde{\gamma} = (\iota(a_1), ..., \iota(a_n))$, where $\iota$ is the imbedding 
$\Z \hookrightarrow \Zp$.
Then $\ideal{\tilde{\gamma}, \tilde{\gamma}}_p = a + mq$ for some $m \in \Z$.
As $(a, p)=1$ and $p|q$, $a+mq$ is still a unit in $\Zp$. Hence,
the Gram matrix of the submodule $\Zp \tilde{\gamma}$ is just
the $1$--by--$1$--matrix $(a + mq) \in \GL_1(\Zp)$.
By \cite{kneser}, Satz 1.6 on p.2, we can split $\tilde{\gamma}$ off orthogonally.
Hence, using Rmk. \ref{rmk:change-of-basis-Zp-vs-ZmodpZ}, we can split 
off $\gamma$ orthogonally from $D$, i.e. for $U := \Z_q \gamma$ we have
$D = U \orthplus U^\bot$ with $U^\bot = \gamma^\bot$.
By Rmk. \ref{rmk:uniquenessOfJordan}, $U^\bot \cong (\Z_q)^{n-1}$
where, by \eqref{eq:prank-geq-oldform:n},
$$n-1 \geq 
  \begin{cases} 
	  6 \geq 5 & \text{if $p$ is odd and $j=1$} \\
		3 \geq 2 & \text{if $p$ is odd and $j\geq 2$} \\
	  8 \geq 7 & \text{if $p=2$ and $j=1$ or $2$} \\
		4 \geq 3 & \text{if $p=2$ and $j\geq 3$}
	\end{cases}
$$
Thus we may apply Lemmas \ref{lem:finish:p=odd} in the odd case and \ref{lem:finish:p=2} in the case $p=2$
to get two isotropic, orthogonal, linearly independent vectors inside $U^\bot$. We are done in this case.
\\
\\
\noindent \underline{Case 2: $(\gamma, \gamma) = a/q + \Z$ with $p|a$.}

\noindent If $\gamma \neq 0$ is not primitive, then we can write $\gamma = p^s \mu$ for some $s \in \N$ 
and a primitive $\mu$. We have $\mu^\bot \subset \gamma^\bot$ as 
for every $\delta \in \mu^\bot$,
$(\delta, \gamma) = p^s (\delta, \mu) = p^s \cdot (0+\Z) = 0+\Z$. 
If $(\delta, \delta) = b/q + \Z$ with $b \in \Z$ such that 
$p\ndivides b$, then by the first case, we find two isotropic, 
orthogonal, linearly independent vectors inside 
$\mu^\bot \subset \gamma^\bot$. Hence, we are done in this case. 
Now assume that $p|b$.
Let $\mu = \smallsum_i a_i \gamma_i$ with a fixed choice $a_i \in \Z$.
We put $\tilde{\mu} = (\iota(a_1), ..., \iota(a_n)) \in \Zp^n$
where $\iota$ denotes the formal imbedding $\Z \hookrightarrow \Zp$.
The fact that $\mu$ is primitive in $D$ translates into the 
condition that $\tilde{\mu}$ is primitive over $\Zp$.
We have $\pform{\tilde{\mu}, \tilde{\mu}} = b + mq$
for some $m \in \Z$. Now
$p|b$ and $p|q$ so $\nu_p(\pform{\tilde{\mu}, \tilde{\mu}}) > 0$,
where $\nu_p$ denotes the $p$-adic valuation.
Also, $\tilde{G}$, the gramian matrix of $\pform{\cdot, \cdot}$, 
is invertible over $\Zp$ by \eqref{eq:prank-geq-oldform:Gram}.
Hence, we can apply Lemma \ref{lem:primitive-high-norm-splits} to
split off $\Zp \tilde{\mu} \oplus \Zp \tilde{\delta}$ 
for some $\tilde{\delta} \in \Zp^n$ orthogonally.
After going back to $\Z_q$ using Rmk. \ref{rmk:change-of-basis-Zp-vs-ZmodpZ}, 
$D$ splits orthogonally into
$$D = U \orthplus U^\bot$$
where $U = \Z_q \gamma' \oplus \Z_q \delta'$ for some $\delta' \in D$.
By Rmk. \ref{rmk:uniquenessOfJordan}, $U^\bot \cong (\Z_q)^{n-2}$ where,
by \eqref{eq:prank-geq-oldform:n},
 $$n-2 \geq 
  \begin{cases} 
	  5 & \text{if $p$ is odd and $j=1$}\\
	  2 & \text{if $p$ is odd and $j \geq 2$}\\
	  7 & \text{if $p=2$ and $j=1$ or $2$} \\
		3 & \text{if $p=2$ and $j\geq 3$}
	\end{cases}
$$
Thus we may apply Lemmas \ref{lem:finish:p=odd} in the odd case and \ref{lem:finish:p=2} in the case $p=2$
to get two isotropic, orthogonal, linearly independent vectors inside 
$U^\bot \subset (\gamma')^\bot \subset \gamma^\bot$. 
We are done in this case.

It remains to see what happens if $\gamma=0$.
We take any other $\delta \neq 0$ and proceed as above
to find two orthogonal, isotropic, linearly independent 
vectors inside $\delta^\bot$. Then, they are also contained
in $\gamma^\bot$ as $\gamma^\bot$ is all of $D$!

\end{proof}

\begin{cor}
\label{cor:main-cor}
If $N\in \N$ is fixed and $\D$ is a discriminant form of level $N$ with 
$|D| \geq N^9$, then every vector valued modular form for $D$ is an oldform. 
This bound ($N^9$) is absolutely not optimal.
\end{cor}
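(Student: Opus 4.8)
The plan is to run the machinery behind Theorem~\ref{thm:prank-geq-oldform} rather than merely to quote it. By Lemma~\ref{lem:all-old} it suffices to prove that the algebraic lift $\uparrow$ (over all isotropic subgroups) is surjective, and by Lemmas~\ref{lem:egammaimage} and~\ref{lem:existence-nicely-orth-gamma-general} this reduces to the following assertion: for every $\gamma\in D$ there are two isotropic, mutually orthogonal, weakly $\Z_{p^e}$-linearly independent vectors lying in $\gamma^\bot$, for some prime power $p^e$. So everything comes down to converting the size hypothesis $|D|\ge N^9$ into such a configuration inside a single $p$-part of $D$.

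First I would fix a Jordan splitting of $\D$ (Theorem~\ref{thm:jordan}) and set up the bookkeeping: $|D|=\prod_p|D_p|$, while the level factors as $N=\prod_p N_p$ with $N_p=p^{J_p}$, where $J_p$ is the largest exponent occurring in the $p$-part (for $p=2$ this is up to a harmless extra factor of $2$ coming from odd $2$-adic constituents, which only enlarges $N$). Hence $|D|\ge N^9$ forces $|D_p|\ge p^{9J_p}$ for at least one prime $p$; fix such a $p$ and work inside $D_p$, a free $\Z_{p^{J_p}}$-module.

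Then I would split into cases. If some rank of the $p$-part already meets a threshold of Theorem~\ref{thm:prank-geq-oldform} (i.e.\ $e_{p,1}\ge7$, or $e_{p,j}\ge4$ for some $j\ge2$, or one of the $p=2$ conditions), invoke that theorem and stop. Otherwise every rank stays below those ceilings, and feeding the ceilings into the product formula for $|D_p|$ shows that $|D_p|\ge p^{9J_p}$ can only hold if $J_p\ge2$ and the deep part is fat: from $|D_p|\le p^{\,e_{p,1}+J_p\sum_{j\ge2}e_{p,j}}$ and $e_{p,1}\le6$ one gets $\sum_{j\ge2}e_{p,j}\ge 9-6/J_p\ge3$. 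Now for each deep Jordan generator $\mu$, of exponent $p^j$ with $j\ge2$, the scaled element $p^{j-1}\mu$ is isotropic (the computation is exactly that of Lemmas~\ref{lem:finish:p=odd} and~\ref{lem:finish:p=2}, using $2(j-1)\ge j$, resp.\ $\ge j+1$ for odd $2$-adic blocks), and for generators in distinct Jordan summands -- or distinct slots of one summand, which is diagonal for odd $p$ -- these scaled elements are mutually orthogonal and weakly $\Z_p$-linearly independent. They therefore span a totally isotropic subgroup $U\cong\Z_p^{\,\sum_{j\ge2}e_{p,j}}$ of rank $\ge3$ on which the bilinear form vanishes identically. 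For any prescribed $\gamma$, the subgroup $\gamma^\bot\cap U$ has index $\le p$ in $U$, hence rank $\ge2$, and any two $\Z_p$-independent elements of it are isotropic, orthogonal and weakly independent, so Lemma~\ref{lem:existence-nicely-orth-gamma-general} (with $e=1$) gives $\e_\gamma\in\im(\uparrow)$. As $\gamma$ was arbitrary, $\uparrow$ is surjective, and Lemma~\ref{lem:all-old} finishes the corollary. For $p=2$ one argues identically, separating even blocks $\Z_{2^j}\oplus\Z_{2^j}$ from odd blocks and tracking the $2^{j+1}$-versus-$2^j$ level discrepancy exactly as in Lemma~\ref{lem:finish:p=2}; the constant $9$ is generous enough to absorb the extra factor of $2$.

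The main obstacle is precisely this case split: one cannot finish by a naive count against Theorem~\ref{thm:prank-geq-oldform}, because a tall Jordan summand $\Z_{p^j}$ contributes roughly $p^{3j^2/2}$ to $|D_p|$ -- quadratic in $j$ -- while no individual rank need reach any threshold, so for \emph{no} fixed power $N^c$ would that count succeed. What rescues the argument is that large depth is itself exploitable (an order-$p$ isotropic vector falls out of $\Z_{p^j}$ for free once $j\ge2$), and $|D|\ge N^9$ is exactly what guarantees that, whenever no rank is large, enough deep constituents are present to build the totally isotropic $U$. Checking that the scaled generators genuinely span a totally isotropic subgroup and that $\gamma^\bot$ still meets it in rank $\ge2$ is where the remaining (routine) work sits; the wastefulness of the estimate is exactly why $N^9$ is, as the paper says, far from optimal.
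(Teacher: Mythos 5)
Your instinct that the naive count against Theorem~\ref{thm:prank-geq-oldform} cannot succeed is correct, and it exposes a defect in the paper's own two-line proof, which asserts that $|D|\ge N^9$ forces some Jordan multiplicity to reach $9$. That claim is false once the level involves a high prime power: for $p$ odd, take $D = \Z_p^6\oplus\Z_{p^2}^3\oplus\Z_{p^3}^3\oplus\Z_{p^4}^3\oplus\Z_{p^5}^3$ with any non-degenerate diagonal form. Then $N=p^5$ and $|D|=p^{48}\ge p^{45}=N^9$, yet no $p$-rank reaches $7$ (for $j=1$) or $4$ (for $j\ge 2$), so the theorem's hypothesis fails. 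Your diagnosis ($|D|$ grows quadratically in the depth while the ranks stay bounded) and your repair (build a totally isotropic $\mathbb{F}_p$-subspace $U$ of dimension $\ge 3$ from scaled deep generators $p^{j-1}\mu$, intersect with $\gamma^\bot$, and feed two independent elements of $\gamma^\bot\cap U$ into Lemma~\ref{lem:existence-nicely-orth-gamma-general}) are both correct for odd $p$: the scaled vectors are isotropic once $j\ge 2$, pairwise orthogonal across Jordan slots, $Q$ vanishes identically on $U$, $\gamma^\bot\cap U$ has codimension at most one because $U$ has exponent $p$, and the degenerate case $J_p=1$ is caught by $e_{p,1}\ge 9\ge 7$.

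The genuine gap is at $p=2$, and it is not the level discrepancy $2^{j+1}$ versus $2^j$ you point to, but a failure of isotropy of the scaled generators themselves. For an odd $2$-adic block $\Z_{2^2}$ with $Q(\gamma)=(a+4v)/8$ one has $Q(2\gamma)=a/2+2v\equiv\tfrac{1}{2}\pmod{\Z}$, so $2\gamma$ is not isotropic, and the span of such scaled depth-$2$ odd-block generators is \emph{not} totally isotropic as your text claims; in the regime where no rank reaches a threshold, the $2$-part may well be dominated by exactly such blocks. A repair is available: on $V=\langle 2\gamma_1,\dots,2\gamma_m\rangle\cong\mathbb{F}_2^m$ the bilinear form vanishes and $Q$ becomes the $\mathbb{F}_2$-linear functional $\sum c_i\mapsto\tfrac{1}{2}\sum c_i$, so its kernel is a totally isotropic hyperplane, and losing one dimension is affordable. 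But that is a different argument than saying the constant $9$ absorbs a factor of $2$, and Lemma~\ref{lem:finish:p=2} itself abandons scaling at $j\le 2$ in favour of a Kneser regularity argument precisely because of this obstruction. The overall strategy is sound and strictly better than what the paper wrote, but the $p=2$ branch must actually be carried out rather than asserted.
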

\begin{proof}
By measuring the size of a Jordan decomposition, we see that at least one 
$\Z_{p^e}$--part has to occur with a multiplicity $\geq 9$.
Thus, the assumption of Thm. \ref{thm:prank-geq-oldform} is met.
\end{proof}
\begin{appendix}
\section{Appendix}

\begin{rmk}
  \label{rmk:change-of-basis-Zp-vs-ZmodpZ}
  Let $p$ be a prime (not necessarily odd) and $D$ a discriminant form such that 
	$D \cong \Z_{p^e}^n$ (as groups, ignoring the quadratic form). 
  Choose a basis $\mathcal{A} = \{\alpha_1, ..., \alpha_n\}$. 
	Put 
	  $$H := ((\alpha_i, \alpha_j))_{i,j=1,...,n} \in (\Q/\Z)^{n \times n}$$
	then $H$ is of the form
  $H = p^{-e} G + \Z$
  for some symmetric matrix $G \in \Z^{n \times n}$. 
	Although, $G$ may not be invertible over $\Z$, 
	its $p$-adic version $\tilde{G}=\iota(G) \in \Zp^{n \times n}$ is.
	Here, $\iota$ denotes the formal imbedding $\Z \hookrightarrow \Zp$.
	Assume we are given a change of basis over $\Zp$, essentially a matrix 
  $\tilde{S} \in \GL_n(\Zp)$ then $S := R_{p^e}^\Z(\tilde{S})$ (see Not. \ref{not:padicStuff}) is 
  a change of basis over $\Z_{p^e}$ in the following sense:
  If
  $$S = \mat{
          s_{11} & s_{12} & \hdots & s_{1n} \\
          s_{21} & s_{22} & \hdots & s_{2n} \\
          \vdots & \vdots & \vdots & \vdots \\ 
          s_{n1} & s_{n2} & \hdots & s_{nn}
        }$$ 
  then we may put
  \begin{align*}
    \beta_1 &:= R_{p^e}^\Z(s_{11}) \alpha_1 + R_{p^e}^\Z(s_{21}) \alpha_2 + ... + R_{p^e}^\Z(s_{n1}) \alpha_n \\
    \beta_2 &:= R_{p^e}^\Z(s_{12}) \alpha_1 + R_{p^e}^\Z(s_{22}) \alpha_2 + ... + R_{p^e}^\Z(s_{n2}) \alpha_n \\
             & \vdots \\
    \beta_n &:= R_{p^e}^\Z(s_{1n}) \alpha_1 + R_{p^e}^\Z(s_{2n}) \alpha_2 + ... + R_{p^e}^\Z(s_{nn}) \alpha_n \\
  \end{align*}
  i.e. the coordinates of the new basis (w.r.t. the old basis) are the columns
  of $R_{p^e}^\Z(S)$. Then $\mathcal{B} = \{\beta_1, ..., \beta_n\}$ is a new basis for $D$ 
	and the Gram matrix of them is
  $$p^{-e} R_{p^e}^\Z(\tilde{S}^T \tilde{G} \tilde{S}) + \Z$$
\end{rmk}
\begin{proof}

We use Thm. \ref{thm:jordan} to get a Jordan decomposition of $D$.
By basic algebra, the decomposition of a finite abelian group into powers of 
$\Z_{p^e}$ for primes $p$ and $e \in \N$ is unique 
(see for example, \cite{jantzen-schwermer}, Satz 5.14 and Satz 5.16).
Hence, the Jordan decomposition only consists (algebraically) of
direct summands of the form $\Z_{p^e}$. Hence, by Thm. \ref{thm:jordan}, if $p$ is odd then
$H = p^{-e} \diag(a_1, ..., a_n)$ with $a_i \in \Z, (a_i,p)=1$.
If $p=2$ then $H$ is of the form
$$
  H := 2^{-e}\mat{
  x_1 & 1   & \\
   1  & x_1 & \\
      &     & \ddots \\
      &     &         & x_r & 1 \\
      &     &         &  1  & x_r \\
      &     &         &     &     & a_1 \\
      &     &         &     &     &               & \ddots \\
      &     &         &     &     &               &        & a_r  
  } + \Z$$
with $x_i \in \{0,2\}$ and $(a_i,2) = 1$.
Now for
$G = \diag(a_1, ..., a_n)$ if $p$ is odd, respectively,
$$
  G := \mat{
  x_1 & 1   & \\
   1  & x_1 & \\
      &     & \ddots \\
      &     &         & x_r & 1 \\
      &     &         &  1  & x_r \\
      &     &         &     &     & a_1 \\
      &     &         &     &     &               & \ddots \\
      &     &         &     &     &               &        & a_r  
  }$$
if $p=2$, we have $G\in \Z^{n \times n}$ is symmetric, $(\det(G),p)=1$, so 
$\tilde{G} = \iota(G) \in \Zp^{n \times n}$ is invertible and
$H = p^{-e} G + \Z$.
Remark that one can also show this directly (without the usage of a Jordan basis)
but it involves some fumbling with different maps and relations between
$\Z, \Zp$ and $\Z_{p^e}$.
	Now we show the assertion about the change of basis:
  As $\tilde{S}$ is invertible over \Zp, there exists $\tilde{S}^{-1} \in \Zp^{n \times n}$.
	When we have a commutative ring $R$, a matrix $X \in R^{n \times n}$ and an ordered set of vectors
  $v = \{v_1, ..., v_n\} \subset R^n$ then we say that we operate on $v$ if we form
  $w_i := \sum_{j=1}^n X_{ji} v_i$ i.e. the new coordinates are given column wise.
  We write $w = X.v$ in this case.
  A quick matrix multiplication reveals that $Y.X.v = (XY).v$ for all matrices 
	$X,Y \in R^{n \times n}$ and every ordered set of vectors $v$.
  Hence, in our situation above, operating on the new basis $\mathcal{B} = \{\beta_1, ..., \beta_n\} = R_{p^e}(\tilde{S}).\mathcal{A}$ 
  with $R_{p^e}(\tilde{S}^{-1})$ results in
  $$R_{p^e}(\tilde{S}^{-1}).R_{p^e}(\tilde{S}).\mathcal{A} = \left( R_{p^e}(\tilde{S}^{-1})R_{p^e}(\tilde{S}) \right).\mathcal{A}
       = R_{p^e}(\tilde{S}^{-1}\tilde{S}).\mathcal{A} = \id.\mathcal{A} = \mathcal{A}$$
  Hence, the $\alpha_i$ lie in the $\Z$-span of the $\beta_i$, so the $\beta_i$ generate the full module $D$.
  We need to see that they are $\Z_{p^e}$--linearly independent.
  Assume there is a relation $\sum_i \lambda_i \beta_i = 0$, then
  \begin{align*}
    0 &= \sum_i \lambda_i \beta_i = \sum_i \lambda_i \sum_j s_{ji} \alpha_j \\
      &= \sum_j \underbrace{\left(\sum_i s_{ji} \lambda_i\right)}_{= (S\cdot\lambda)_i} \alpha_i \\
  \end{align*}
  As the $\alpha_i$ formed a basis, $S \cdot \lambda$ is the zero vector over $\Z_{p^e}$.
  Now we know that $\det(\tilde{S}) \in \Zp^\times$.
  As the reduction maps are ring homomorphisms and $\det$ is a polynomial,
  $\det(S) = R_{p^e}(\det(\tilde{S})) \in R_{p^e}(\Zp^\times) \subset \Z_{p^e}^\times$. 
	Hence, $S$ is invertible and $S\lambda=0$ implies $\lambda=0$.
  We also compute
  \begin{align*}
    (\beta_i, \beta_j) &= (\sum_x s_{xi} \alpha_i, \sum_y s_{yj} \alpha_j) \\
                       &= \sum_x \sum_y s_{xi} s_{yj} H_{xy} \\
                       &= \sum_x \sum_y s_{xi} s_{yj} p^{-e}G_{xy} + \Z \\
                       &= \frac{(S^T G S)_{xy}}{p^e} + \Z
  \end{align*}
  By definition, we have $R_{p^e}(G) \equiv G \mod p^e$ and hence,
  $$S^T G S \equiv R_{p^e}(\tilde{S}^T) R_{p^e}(\tilde{G}) R_{p^e}(\tilde{S}) \equiv R_{p^e} (\tilde{S}^T \tilde{G} \tilde{S}) \mod p^e$$
  so that
  $\frac{(S^T G S)_{xy}}{p^e} + \Z = \frac{R_{p^e}(\tilde{S}^T \tilde{G} \tilde{S})}{p^e} + \Z$.
\end{proof}

\begin{thm}
  \label{thm:only-2-unimodular-forms-over-odd-p}
  Let $p$ be an odd prime and let $R = \Zp$ or $R = \Z_{p^e}$ for some $e \in \N$.
  Up to isomorphism, there are only two non-degenerate, unimodular, 
  symmetric bilinear forms over $R$. Further, for any two forms 
  $B, B'$ on a freely, finitely generated $R$-module $V$, one has
  $$ B \cong B' \iff \det(B) \equiv \det(B') \mod (R^\times)^2$$
\end{thm}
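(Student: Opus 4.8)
The plan is to prove the classification by reducing every such form to a diagonal form with unit entries and then normalizing the diagonal; since $p$ is odd we have $2 \in R^\times$, so orthogonal splitting is available throughout. \textbf{Step 1 (diagonalization).} Let $G \in \GL_n(R)$ be the Gram matrix of $B$ in some basis $e_1, \dots, e_n$. I will first produce a basis vector with unit self-pairing: if some $G_{ii}$ is already a unit, good; otherwise every diagonal entry lies in $pR$, and since $\det G \in R^\times$ the matrix $G$ is not $\equiv 0 \bmod p$, so some off-diagonal $G_{ij}$ with $i \neq j$ is a unit, and then replacing $e_i$ by $e_i + e_j$ turns the $i$-th diagonal entry into $G_{ii} + 2G_{ij} + G_{jj}$, a unit because $2G_{ij}$ is one. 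Once $G_{ii} = u \in R^\times$, the change of basis $e_k \mapsto e_k - G_{ki}u^{-1} e_i$ for $k \neq i$ makes $Re_i$ split off orthogonally; the complement carries a symmetric bilinear form whose Gram determinant times $u$ equals $\det G$ up to the square of a unit, hence is again a unit, so the form on the complement is again unimodular. Induction on $n$ gives $B \cong \langle u_1, \dots, u_n \rangle$ with all $u_i \in R^\times$.

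\textbf{Step 2 (the determinant is an invariant).} A change of basis $S \in \GL_n(R)$ acts by $G \mapsto S^{T} G S$, hence $\det G \mapsto \det(S)^2 \det G$, so the class of $\det B$ in $R^\times/(R^\times)^2$ is an isomorphism invariant. This already proves the implication $B \cong B' \Rightarrow \det B \equiv \det B' \bmod (R^\times)^2$, and it shows that two forms of the same rank with distinct determinant classes are non-isomorphic.

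\textbf{Step 3 (normalization and counting).} The crucial lemma is that a binary unimodular form $\langle u, u'\rangle$ represents $1$, i.e. $ux^2 + u'y^2 = 1$ has a solution in $R$. Modulo $p$ this follows by pigeonhole: in $\mathbb{F}_p$ the sets $\{ux^2\}$ and $\{1 - u'y^2\}$ each have $(p+1)/2$ elements and so must meet; a resulting solution $(x_0,y_0)$ is not $\equiv(0,0)$, so the gradient $(2ux,2u'y)$ of $ux^2+u'y^2$ is nonzero there, and Hensel's lemma lifts the solution to $\Zp$ — for $R = \Z_{p^e}$ one then reduces modulo $p^e$ using $\Z_{p^e} = \Zp/p^e\Zp$. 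From such a representation, $\langle u,u'\rangle$ has an orthogonal basis whose first vector has norm $1$, so $\langle u,u'\rangle \cong \langle 1, w\rangle$ for some unit $w$; Step 2 forces $w \equiv uu' \bmod (R^\times)^2$, and since scaling the second basis vector by a unit $t$ replaces $w$ by $t^2 w$, in fact $\langle u,u'\rangle \cong \langle 1, uu'\rangle$. Iterating on the diagonal form from Step 1 yields $\langle u_1, \dots, u_n\rangle \cong \langle 1, u_1u_2, u_3, \dots, u_n\rangle \cong \cdots \cong \langle 1, \dots, 1, d\rangle$ with $d \equiv \det B$. Finally, for odd $p$ the group $R^\times/(R^\times)^2$ has order $2$ (see \cite{cassels}), so there are exactly two isomorphism classes in each rank, detected by $\det$, which is precisely the asserted equivalence.

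\textbf{Expected main obstacle.} Everything outside Step 3 is routine symmetric Gaussian elimination and determinant bookkeeping; the one point demanding care is the representability of $1$ by a binary unimodular form, in particular carrying it through uniformly for both $R = \Zp$ and $R = \Z_{p^e}$ (where there is no literal Hensel's lemma, only successive lifting modulo $p^{k+1}$ from modulo $p^k$, or the reduction from $\Zp$ indicated above), together with the small verification that restricting a unimodular form to the orthogonal complement of a unit vector stays unimodular.
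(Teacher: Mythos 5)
Your proof is correct and takes essentially the same route as the paper: diagonalize by symmetric Gaussian elimination (possible since $2 \in R^\times$), then normalize the diagonal via a pigeonhole count over $\mathbb{F}_p$ followed by lifting, and finally invoke $|R^\times/(R^\times)^2| = 2$. The only real difference is in the choice of binary lemma: you prove that every binary unimodular form represents $1$ (so $\langle u,u'\rangle \cong \langle 1, uu'\rangle$), whereas the paper proves the specific identity $\langle 1,1\rangle \cong \langle t,t\rangle$ by finding $a^2+b^2 = t$; both rest on the same cardinality argument over $\mathbb{F}_p$, and your version makes the passage from the diagonal form to $\langle 1,\dots,1,d\rangle$ a bit more direct, while also handling $\Zp$ and $\Z_{p^e}$ uniformly rather than reducing to $\Zp$ first as the paper does.
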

\begin{proof}
  If we can transform a form isomorphically into another form over $\Zp$ then by
  Remark \ref{rmk:change-of-basis-Zp-vs-ZmodpZ}, we can do so over $\Z_{p^e}$ (this is even true for $p=2$)
  so it suffices to show the assertion for $\Zp$.
  The reason why this fails for $p=2$ is that the one-dimensional situation, i.e. $\Ztwo^\times / (\Ztwo^\times)^2$ 
  is more complicated than for $p$ odd.
  If $A,B \in \Zp^{n \times n}$ then we write $A \sim B$ iff. there exists an $S \in \GL_n(\Zp)$ 
  such that $S^T A S = B$. This is an equivalence relation encapturing isomorphy of bilinear forms, i.e.
  two bilinear forms $(\cdot,\cdot)_1, (\cdot,\cdot)_2$ are isometrically isomorphic iff. 
  their Gram matrices are in $\sim$ relation.
  If $p$ is odd and $G$ is the Gram matrix of a unimodular symmetric bilinear form, then
  we can apply the machinery in \cite{conway-sloane} Chapter 15, \textsection 4.4, pp. 396-397
  to see that there is an $S \in \GL_n(\Zp)$ such that $D = S^T G S$ is diagonal.
  Comparing the determinants, we see that all elements on the diagonal of $D$ are units in \Zp.
  We know that there is a fixed number $t \in \Z$ such that $t$ is not a square in $\Z_p$ and
  \begin{equation}
    \label{eq:sq-classes-in-Zp}
    \Zp^\times / (\Zp^\times)^2 = \{1(\Zp^\times)^2, t(\Zp^\times)^2\}
  \end{equation}
  see for example
  \cite{cassels}, Cor. on p.40 or any book on $p$-adic numbers.
  This means that for every diagonal entry $a$ in $D$, there exists a unit 
  $\epsilon \in \Zp^\times$ such that $\epsilon^2 a = 1$ or $\epsilon^2 a = t$.
  Writing those $\epsilon$ diagonally in some matrix $S_2$ and resorting all the $1$'s 
  to the top left, we see that
  $G \sim \diag(1,...,1,t,...,t)$ with, say $a$ ones and $b$ $t$'s.
  Now we show that
  \begin{equation}
    \label{eq:two-dim-sim-overo-p-adics}
    \mat{1 & 0 \\ 0 & 1} \sim \mat{t & 0 \\ 0 & t}
  \end{equation}
  $\Z_p^\times$ is cylic (basic algebra!). Let $x$ be a generator. Then
  $x$ is not a square:
  If $x$ was a square then in fact, every unit would be a square but this is impossible
  because the group homomorphism $s:\Z_p^\times \to \Z_p^\times, a \mapsto a^2$ is not injective,
  in fact, for the generator $x$ as above, we have $y := x^{(p-1)/2} \neq 1$ but $y \in \ker(s)$. 
  Hence, it can also not be surjective by the pigenhole principle.
  Let $\mathcal{S}$ be the set of squares and let $\mathcal{N}$ 
	be the set of nonsquares in $\Z_p^\times$. As $x$ is not a square,
  $\mathcal{S} = \{x^{2e} : e=0,1,...,(p-3)/2\}$ and $\mathcal{N} = \{x^{2e + 1} : e=0,1,...,[(p-3)/2]+1\}$.
  In particular, $|\mathcal{S}| = |\mathcal{N}| = \tfrac{p-1}{2}$. 
  Consider $h : \Z_p \to \Z_p, h(a) = t - a^2$.
  Assume for a moment that $h(a) \notin \mathcal{S}$ for all $a \in \Z_p$. 
  Then $h(a) \in \mathcal{N} \cup \{0\}$ for all $a \in \Z_p^\times$, 
  but $h(a) = 0$ implies $t = a^2$ which is impossible as $t$ was a nonsquare.
  Hence, $h(a) \in \mathcal{N}$ for all $a \in \Z_p$.
  Define an equivalence relation on $\Z_p$ by $a \sim b \iff a = \pm b$.
  Then $\Z_p/\sim$ consists of the classes $\{0\}$ and $\{a, -a\}$ for $a \in \Z_p^\times$,
  i.e. $|\Z_p/\sim| = \tfrac{p-1}{2} + 1$.
  $h$ becomes a well defined map $\overline{h}$ on $\Z_p/\sim$ and now $\overline{h}$ is injective!
  We conclude that $\tfrac{p-1}{2} + 1 = |\Z_p/\sim| = |\im(\overline{h})| = |\im(h)| \leq |\mathcal{N}| = \tfrac{p-1}{2}$, 
  a contradiction. Thus, there exists a $b \in \Z_p$ and $a \in \Z_p^\times$ such that $t - b^2 = h(b) = a^2$.
  This is equivalent to saying that $a^2 + b^2 = t$.
  Take arbitrary but fixed representatives in $\Z$ of $a,b$ (also called $a,b$ in the sequel).
  As the reduced $a$ was a unit in $\Z_p$, $a \in \Zp^\times$.
  As $R_p(a^2 + b^2) \equiv t \,\,\cancel{\equiv}\,\, 0 \mod p$, $a^2 + b^2 \in \Zp^\times$ 
  (the $0$-th term in the $p$-adic expansion of $a^2 + b^2$ is precisely $R_p(a^2 + b^2)$!).
  By \eqref{eq:sq-classes-in-Zp}, there are only two possibilities, either the square class of 
  $a^2 + b^2$ is $1(\Zp^\times)^2$ or $t(\Zp^\times)^2$. Actually, 
  the latter one must be the case as
  $\epsilon^2 (a^2 + b^2) = 1 \Rightarrow 
     1 \equiv R_p(1) \equiv R_p(\epsilon)^2 R_p(a^2 + b^2) \equiv R_p(\epsilon)^2 t$
  so $t \equiv (\epsilon^{-1})^2 \mod p$ is a square. Contradiction.
  Hence, there exists an $\epsilon \in \Zp^\times$ such that
  $\epsilon^2(a^2 + b^2) = t$ or phrased differently, there are $A,C \in \Zp$ such that
  $$A^2 + C^2 = t ~ \text{and}~ A \in \Zp^\times$$
  ($A \in \Zp^\times$ as this $A$ is $\epsilon\cdot a$ and the reduced version of $a$ was in $\Z_p^\times$)
  Consider
  $$S = \mat{A & -\tfrac{AC}{t} \\ C & -\tfrac{C^2}{t} + 1} = \mat{A & 0 \\ C & 1} \mat{1 & -Ct^{-1} \\ 0 & 1}$$
  By the decomposition, $S$ is invertible, so
  $$G \sim S^T G S = \mat{t & 0 \\ 0 & -C^2t^{-1} + 1}$$
  Comparing the square classes of the determinants, we see that 
  $1(\Zp^\times)^2 \equiv \det(G) \equiv t\cdot(-C^2t^{-1} + 1) \mod (\Zp^\times)^2$
  so there exists an $\epsilon \in \Zp^\times$ such that $\epsilon^2 (-C^2t^{-1} + 1) = t$ and thus
  $$S' := S \cdot \mat{1 & 0 \\ 0 & \epsilon}$$
  is such that $(S')^T G S' = \tmat{t & 0 \\ 0 & t}$. 
  Equation \eqref{eq:two-dim-sim-overo-p-adics} is shown.
  We stopped at the point where $G \sim \diag(1,...,1,t,...,t)$. By equation \eqref{eq:two-dim-sim-overo-p-adics}
  we can turn every pair of $t$'s into a pair of $1$'s thus arriving at
  $G \sim \diag(1,...,1)$ or $G \sim \diag(1,...,1,t)$ depending on whether the amount of $t$'s was even or odd.
  This form is called the canonical form of $G$.
  Comparing the square classes of the determinants we see that
  $$G \sim \begin{cases}
             \diag(1,...,1)   & \text{if $\det(G) \equiv 1 \mod (\Zp^\times)^2$} \\
             \diag(1,...,1,t) & \text{if $\det(G) \equiv t \mod (\Zp^\times)^2$}
           \end{cases}
  $$
  Now the assertion is proved: two unimodular forms with coinciding square 
  classes of unit determinants have the \textbf{same} canonical form. 
  In particular, they are isomorphic.
\end{proof}

\end{appendix}


\begin{thebibliography}{DAC96}
\bibitem{bruinier-converse}
\textsc{J. Bruinier}:
\newblock \emph{On the converse theorem for Borcherds products},
\newblock Journal of Algebra 397 (2014), 315-342.
\newblock \url{http://www.mathematik.tu-darmstadt.de/fbereiche/AlgGeoFA/staff/bruinier/publications/converse6.pdf}
\bibitem{cassels}
\textsc{J. W. S. Cassels}:
\newblock \emph{Rational Quadratic Forms},
\newblock Dover Publications, ISBN: 0486466701.

\bibitem{conway-sloane}
\textsc{J. Conway, N. Sloane}:
\newblock \emph{Sphere Packings, Lattices and Groups},
\newblock Springer
\newblock \url{http://books.google.de/books?isbn=0387985859}
\bibitem{diamond-shurman}
\textsc{F. Diamond, J. Shurman}:
\newblock \emph{A First Course in Modular Forms},
\newblock Springer.
\newblock \url{http://books.google.de/books?isbn=0387272267}
\bibitem{elstrodt}
\textsc{J. Elstrodt}:
\newblock \emph{What is a fundamental domain?}
\url{http://happy-werner.de/uni/sonstiges/Elstrodt.pdf}
\bibitem{jantzen-schwermer}
\textsc{J.C. Jantzen, J. Schwermer}:
\newblock \emph{Algebra},
\newblock Springer.
\newblock \url{http://books.google.de/books?isbn=3540213805}
\bibitem{kneser}
\textsc{M. Kneser}:
\newblock \emph{Quadratische Formen},
\newblock Springer.
\newblock \url{http://books.google.de/books?isbn=3540646507}

\bibitem{koecher-krieg}
\textsc{M. Koecher, A. Krieg}:
\newblock \emph{Elliptische Funktionen und Modulformen},
\newblock Springer, ISBN: 3540493247.
\bibitem{milnor-husemoller}
\textsc{J. Milnor, D. Husem\"oller}:
\newblock \emph{Symmetric bilinear forms},
\newblock Springer, 1973.

\bibitem{miyake}
\textsc{T. Miyake}:
\newblock \emph{Modular Forms},
\newblock Springer.
\newblock \url{http://books.google.de/books?isbn=3540295933}
\bibitem{raum}
\textsc{M. Raum}:
\newblock \emph{Computing Jacobi Forms and Linear Equivalences of Special Divisors},
\newblock \url{http://arxiv.org/abs/1212.1834}.
\bibitem{sch:weil2}
\textsc{N. Scheithauer}:
\newblock \emph{Some constructions of modular forms for the Weil representation of $\SL_2(\Z)$},
\url{http://www.mathematik.tu-darmstadt.de/\~scheithauer/papers/modularforms2.pdf}
\bibitem{wall}
\textsc{C.T.C. Wall}:
\newblock \emph{Quadratic Forms on Finite Groups, and Related Topics},
\newblock Topology Vol. 2. pp. 281-298. Pergamon Press. 1964.
\newblock \url{http://www.maths.ed.ac.uk/~aar/papers/wallform.pdf}

\bibitem{weil}
\textsc{A. Weil}:
\newblock \emph{Sur certains groupes d'op\'{e}rateurs unitaires},
\newblock Acta Math. (1964), 143 - 211 (French).

\bibitem{werner-bsc}
\textsc{F. Werner}:
\newblock \emph{Rings of Modular Forms},
\newblock Bachelor's Thesis.
\newblock \url{http://happy-werner.de/uni/Mathe_BSc/MainDocument.pdf}

\bibitem{mthesis}
\textsc{F. Werner}:
\newblock \emph{Discriminant Forms and Hecke Operators},
\newblock Master's Thesis.
\newblock \url{http://happy-werner.de/uni/Mathe_MSc/mthesis.pdf}

\bibitem{werner-weil}
\newblock \url{http://happy-werner.de/uni/sonstiges/WeilIsRepr.pdf}

\bibitem{werner-jordanform}
\newblock \url{http://happy-werner.de/uni/sonstiges/Jordanizing_discriminant_forms.pdf}
\bibitem{zemel-diss}
\textsc{S. Zemel}
\newblock \emph{A p-adic Approach to the Weil Representation of Discriminant Forms Arising from Even Lattices},
\newblock arXiv:1208.2570 [math.NT].
\newblock \url{http://arxiv.org/abs/1208.2570}


\end{thebibliography}
\end{document}